\numberwithin{equation}{section}
\theoremstyle{plain}
\newtheorem{theorem}{Theorem}[section]
\newtheorem*{theorem*}{Theorem}
\newtheorem*{proposition*}{Proposition}
\newtheorem*{lemma*}{Lemma}
\newtheorem{lemma}[theorem]{Lemma}
\newtheorem{proposition}[theorem]{Proposition}
\newtheorem{corollary}[theorem]{Corollary}
\newtheorem*{corollary*}{Corollary}
\theoremstyle{definition}
\theoremstyle{remark}
\newtheorem{remark}[theorem]{Remark}
\newcommand{\e}{\epsilon}
\newcommand{\ep}{\varepsilon}
\newcommand{\NN}{\mathbb{N}}
\newcommand{\R}{\mathbb{R}}
\newcommand{\US}{\mathbb{S}}
\newcommand{\C}{\mathbb{C}}
\newcommand\supp{\mathop{\rm supp}}
\newcommand\real{\mathop{\rm Re}}
\newcommand\imag{\mathop{\rm Im}}
\newcommand\sgn{\mathop{\rm sgn}}
\newcommand*{\defeq}{\mathrel{\vcenter{\baselineskip0.5ex \lineskiplimit0pt

                     \hbox{\scriptsize.}\hbox{\scriptsize.}}}%
                     =}
\newcommand*{\qefed}{=\mathrel{\vcenter{\baselineskip0.5ex \lineskiplimit0pt

                     \hbox{\scriptsize.}\hbox{\scriptsize.}}}}
\title[Logarithmic wave decay]{Logarithmic wave decay for short range wavespeed perturbations with radial regularity}
\author{Gayana Jayasinghe}
\thanks{No affiliation, Kandy, Sri Lanka. \texttt{mgsjayasinghe@gmail.com}}
\author{Katrina Morgan}
\thanks{Department of Mathematics, Temple University, Philadelphia, PA, USA. \texttt{morgank@temple.edu}}
\author{Jacob Shapiro}
\thanks{Department of Mathematics, University of Dayton, Dayton, OH, USA. \texttt{jshapiro1@udayton.edu}. \emph{Corresponding author.}}
\author{Mengxuan Yang}
\thanks{Department of Operations Research \& Financial Engineering, Princeton University, Princeton, NJ, USA. \texttt{yangmx@princeton.edu}}
\begin{document}

\maketitle





\begin{abstract}
We establish logarithmic local energy decay for wave equations with a varying  wavespeed in dimensions two and higher, where the wavespeed is assumed to be a short range perturbation of unity with mild radial regularity. The key ingredient is H\"older continuity of the weighted resolvent for real frequencies $\lambda$, modulo a logarithmic remainder in dimension two as $\lambda \to 0$. Our approach relies on a study of the resolvent in two distinct frequency regimes. In the low frequency regime, we derive an expansion for the resolvent using a Neumann series and properties of the free resolvent. For frequencies away from zero, we establish a uniform resolvent estimate by way of a Carleman estimate.

\medskip

\noindent \textbf{Acknowledgements:} We thank Kiril Datchev for helpful discussions.

\medskip

\noindent \textbf{Keywords:} wave decay, wavespeed, resolvent estimate, Carleman estimate, Schr\"odinger operator
\end{abstract}

\section{Introduction} \label{introduction section}

The goal of this article is to establish sharp local energy decay for the solution to the variable coefficient wave equation,
\begin{equation} \label{wave eqn}
\begin{cases}
 (\partial_t^2  - c^2(x)\Delta + V(x))u(x,t) = 0, \qquad (x,t) \in \R^n \times \R, \\
u(x,0) = u_0(x), \, \partial_t u(x,0) = u_1(x).
\end{cases} 
\end{equation}
in dimension $n  \ge 2$, where $\Delta \le 0$ is the Laplacian on $\R^n$.

We impose the following regularity and decay on the \textit{wavespeed} $c(x)$:
\begin{equation} \label{c bd above and below}
c, \, c^{-1} \in L^\infty(\R^n; (0, \infty)),
\end{equation}
and 
\begin{equation} \label{one minus c short range} 
|1 - c(x)| \le C \langle x \rangle^{-\delta_0} 
\end{equation}
for some $C> 0$ and $\delta_0 > 0$, where $\langle x \rangle \defeq (1 + |x|^2)^{1/2}$. Furthermore, the radial derivative $\partial_r c$ (where $r = |x|$) defined in the sense of distributions, should belong to $L^\infty(\R^n)$ and satisfy
\begin{equation} \label{partialr c short range}
|\partial_r  c(x)|  \le C\langle x \rangle^{-\delta_1}. 
\end{equation}
for some $C> 0$ and $\delta_1 > 0$. The potential $V(x)$ is assumed to be nonnegative. In dimension $n=2$, we require $V \equiv 0$, while for $n \ge 3$, we assume $V$ has sufficient decay at infinity, as specified in Theorem \ref{LED thm}.

\begin{theorem} \label{brief decay}
Let $s > 0$. Assume the wavespeed $c$ meets conditions \eqref{c bd above and below}, \eqref{one minus c short range} with $\delta_0 > 2$, and \eqref{partialr c short range} with $\delta_1 > 1$. Let the potential $V$ be as specified in Theorem \ref{LED thm} below. Then there exsits $C > 0$, such that for any initial data $(u_0, u_1)$ with $\langle x \rangle^s u_0 \in H^2(\R^n)$ and $ \langle x \rangle^s u_1 \in H^1(\R^n)$, where $H^2$ and $H^1$ are the standard Sobolev spaces, the corresponding solution $u$ to \eqref{wave eqn} obeys
\begin{equation} \label{prelim decay}
\begin{split}
\| \nabla \langle x \rangle^{-s} u(\cdot,t)\|_{L^2(\R^n)}  &+  \| \langle x \rangle^{-s} \partial_t u(\cdot,t)\|_{L^2(\R^n)}\\
&\le \frac{C}{1 + \log \langle t \rangle}\big( \|  \langle x \rangle^s u_0 \|_{H^2(\mathbb{R}^n)} + \|  \langle x \rangle^s u_1\|_{H^1(\mathbb{R}^n)} \big).
\end{split}
\end{equation}
\end{theorem}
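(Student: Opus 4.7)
The plan is to reduce the local energy decay problem to a study of the weighted resolvent of the stationary operator $P = -c^2(x)\Delta + V(x)$, viewed as a self-adjoint operator on $L^2(\R^n; c^{-2}\,dx)$. First I would use the spectral theorem together with Stone's formula to represent the solution of \eqref{wave eqn} as a contour integral over the real axis of the cutoff resolvent $R_\chi(\lambda) \defeq \chi(P - \lambda^2)^{-1}\chi$, where $\chi = \langle x \rangle^{-s}$. A standard Tauberian / integration-by-parts argument (in the spirit of Burq and Bony--H\"afner) then converts H\"older regularity of $\lambda \mapsto R_\chi(\lambda)$ on $\R$ into a $1/\log\langle t\rangle$ decay rate for local energy, with the logarithmic remainder in $n=2$ still compatible with the same rate. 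The energy-versus-weighted-$L^2$ conversion needed to get the gradient and $\partial_t u$ norms in \eqref{prelim decay} follows by commuting one factor of $\langle P \rangle^{1/2}$ through, which costs only the Sobolev norms on the right-hand side.

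The main work is then to establish the H\"older continuity of $R_\chi(\lambda)$, which I would do by splitting into the two frequency regimes advertised in the abstract. For $|\lambda|$ bounded away from zero, the plan is to prove a Carleman estimate with a convex radial weight $\varphi(r)$ adapted to hypothesis \eqref{partialr c short range}. The key computation is the commutator of $-c^2\Delta$ against $e^{\varphi}\partial_r e^{-\varphi}$: choosing $\varphi$ with $\varphi'(r) > 0$ gives a positive principal term $\varphi'(r) c^2(x)$, while the error involves $\partial_r c$, which by \eqref{partialr c short range} is integrable enough in $r$ to be absorbed. The short range condition \eqref{one minus c short range} controls the nonradial remainder, and the nonnegativity of $V$ only helps. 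This yields a uniform bound $\|R_\chi(\lambda)\|_{L^2 \to L^2} \le C$ for $|\lambda| \ge \lambda_0 > 0$, and by interpolation with the analytic structure of $R(\lambda)$ off the real axis, H\"older continuity on this range.

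For $|\lambda| \ll 1$ I would write $P - \lambda^2 = (-\Delta - \lambda^2) + W$, where $W \defeq (1 - c^{-2})(-\Delta - \lambda^2) + c^{-2} V$ after a symmetrization, and treat $W$ as a short-range perturbation of the free Laplacian. Iterating the resolvent identity produces a Neumann series
\[
R(\lambda) = R_0(\lambda) \sum_{k=0}^\infty \bigl( - W R_0(\lambda) \bigr)^k,
\]
which converges in weighted $L^2$ provided the weighted free resolvent $\chi R_0(\lambda) \chi$ is bounded uniformly as $\lambda \to 0$. In dimensions $n \ge 3$ this is a classical consequence of the Hardy inequality and the explicit kernel of $R_0$; in dimension $n = 2$ the kernel of $R_0(\lambda)$ has a $\log\lambda$ singularity coming from the Hankel function expansion, and this is precisely the source of the logarithmic remainder in the theorem. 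The H\"older (or log-H\"older) regularity of $R_0(\lambda)$ transfers to $R(\lambda)$ through the Neumann series because $W$ is $\lambda$-independent modulo a term whose dependence is smooth.

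The hardest step will be the low-frequency analysis in $n=2$: one must verify that, despite the $\log\lambda$ singularity of $R_0$, the operator $I + W R_0(\lambda)$ is invertible on the relevant weighted space for all small $\lambda$, i.e.\ that there is no zero resonance or zero eigenvalue for $P$. The hypotheses $V \equiv 0$ and $c - 1 \in L^\infty$ short-range, together with the sign of $c^{-2} - 1$ being compatible with positivity of $P$, are what make this possible; the log remainder then comes from the leading $\log\lambda$ term in the expansion of $R_0$, which survives the inversion because it does not cancel against any subleading term of $W$. Once H\"older-with-log regularity of $R_\chi$ on all of $\R$ is in hand, the Tauberian step from the first paragraph yields \eqref{prelim decay}.
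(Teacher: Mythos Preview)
Your overall architecture matches the paper, but there are two genuine gaps.

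First, the high-frequency resolvent bound you claim is too strong. A Carleman estimate with a radial phase adapted to \eqref{partialr c short range} does \emph{not} give a uniform bound $\|R_\chi(\lambda)\|\le C$ for $|\lambda|\ge\lambda_0$; in the absence of a nontrapping hypothesis it only gives the exponential bound $\|R_\chi(\lambda)\|\le e^{C|\lambda|}$ (semiclassically, $e^{C/h}$). This is the whole reason the decay rate is logarithmic rather than polynomial. Your Tauberian sketch therefore has a missing mechanism: you must explain how an exponentially growing resolvent bound still yields $1/\log\langle t\rangle$. The paper does this via the Cardoso--Vodev device: spectrally localize to $[0,A^2]$ with $A=\gamma\log t$, so that $e^{CA}=t^{C\gamma}$ is merely polynomial; on this window the H\"older regularity (proved by bounding $\langle x\rangle^{-s}(G-\lambda^2)^{-2}\langle x\rangle^{-s}$, not by an interpolation argument) combined with a mollification and integration by parts in Stone's formula gives $O(t^{-\nu})$ for $\gamma$ small. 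The complementary piece $\mathbf 1_{(A^2,\infty)}(G)$ is handled by pure spectral calculus, trading the $H^2\times H^1$ regularity of the data for a factor $G^{-1/2-\eta}$ and hence $A^{-2\eta}=(\log t)^{-2\eta}$.

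Second, your low-frequency Neumann series is set up incorrectly. Writing $P-\lambda^2=(-\Delta-\lambda^2)+W$ with $W=(1-c^{-2})(-\Delta-\lambda^2)+c^{-2}V$ makes $W$ a second-order operator, so $WR_0(\lambda)$ is not small as $\lambda\to 0$ (it is $O(1)$, not $o(1)$), and the series does not converge. The fix is to first factor $(-c^2\Delta+V-\lambda^2)^{-1}=(-\Delta+c^{-2}V-c^{-2}\lambda^2)^{-1}c^{-2}$ and then perturb from $(-\Delta+c^{-2}V-\lambda^2)^{-1}$ by the \emph{potential} $\lambda^2(1-c^{-2})$; the smallness now comes from the explicit $\lambda^2$, and the condition $\delta_0>2$ is exactly what makes $\langle x\rangle^{2s}(1-c^{-2})$ bounded for some $s>1$. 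In dimension two the paper sidesteps the zero-resonance question entirely by taking $V\equiv 0$, so one works directly with the free resolvent expansion rather than inverting $I+WR_0$.
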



Previously, the third author established \eqref{prelim decay}, with a compactly supported weight, when $\nabla c \in L^\infty(\R^n)$, $n \ge 2$, and $c = 1$ outside a compact set \cite[Theorem 1]{sh18}. Thus the novelty of Theorem \ref{brief decay} is that it extends this result to a more general spatial weight while relaxing the conditions on the wavespeed.

Theorem \ref{brief decay} is a consequence of Theorem \ref{LED thm} in Section \ref{stmt of main thm subsection}, and we provide the proof of this implication in Section \ref{proof of prelim decay appendix}. In addition, we show that \eqref{prelim decay} can be strengthened: under additional regularity assumptions on the initial data with respect to $-c^2 \Delta + V$, one obtains more decay in time. More precisely, the power of the inverse logarithmic term on the right-hand side of \eqref{prelim decay} can be increased, at the cost of replacing the norm on the initial data by one involving higher powers of $-c^2 \Delta + V$. Furthermore, if we assume $s>1$, the gradient term in the left-hand side of \eqref{prelim decay} may be replaced by $\|\langle x\rangle^{-s}u(\cdot,t)\|_{H^{1}}$. 



In addition, the Carleman estimate developed in Section \ref{Carleman estimate section}, and thus Theorems \ref{brief decay} and \ref{LED thm}, remain valid under a regularity condition on $c(x)$ slightly weaker than \eqref{partialr c short range}. Specifically, for each direction $\theta \in \US^{n-1}$, the profile $r \mapsto c(r\theta)$,  may have jump discontinuities. These are permissible provided they occur within a fixed compact set of radii and that the total radial variation is controlled uniformly across all directions. See \eqref{V prime long range} for the precise assumption.

Logarithmic decay was first obtained by Burq for smooth, compactly supported metric perturbations of the Laplacian in dimensions $n \ge 2$ \cite{bu98}, and later extended to long range metrics analytic at infinity, provided the initial data is localized away from zero frequency \cite{bu02}. Both cases allow for a smooth, compact, Dirichlet obstacle. Cardoso and Vodev expanded the result of \cite{bu02} to manifolds, and without the analyticity assumption \cite{cavo04}. Bouclet showed that for smooth long range metrics on $\R^n$, $n \ge 3$, the spectral localizer is not necessary \cite{bo11}. Recently, Christiansen, Datchev, Morales, and the last author generalized the method in \cite{cdy} and revisited Burq's original setting of compactly supported perturbations, establishing logarithmic decay in dimension two without any regularity assumption at zero frequency \cite{cdmy}.

If $n \ge 2$ and there is no condition on the radial derivative of the wavespeed, then only slower local energy decay rates are known. Such results require \eqref{c bd above and below} and in addition $c \equiv 1$ outside of a compact set. The sharpest decay rate known in that case is $(\log( \log t)/\log t)^{3/4}$, $t \gg 1$; it improves to $(\log( \log t)/\log t)^{(\alpha + 3)/4}$ if $c$ is H\"older continuous with H\"older exponent $0 < \alpha < 1$ \cite[Corollary 1.5]{vo20}.  On the other hand, if we suppose \eqref{c bd above and below} and $c = 1$ outside a compact set, and $c$ is radially symmetric, it follows from the resolvent estimates in \cite{vo22, dgs23} that the local energy decays like $1/\log t$. These decay rates contrast with the case $n=1$, where exponential decay occurs if the wavespeed has bounded variation and equals one outside a compact set \cite{dash23}.

The proof of Theorem \ref{LED thm} shows that if we localize $u(\cdot, t)$ away from zero frequency, we obtain logarithmic decay in any dimension $n \ge 2$, provided $\limsup_{|x| \to \infty}|1 - c(x)| = 0$ as well as $\delta_1 > 1$ in \eqref{partialr c short range}. Our requirement that $1 - c = O(\langle x \rangle^{-\delta_0})$ for some $\delta_0 > 2$ arises from our treatment of the low-frequency regime. Specifically, we use a Neumann series to relate the resolvent of $-c^2 \Delta + V$ to that of $-\Delta + V$ (Section \ref{resolvent behavior section}). Under our short range assumptions on $V$, the low-frequency behavior of this latter resolvent can be understood from the asymptotics of the free resolvent (Appendices \ref{free resolv low freq appendix} and \ref{perturb resolv low freq appendix}). On the other hand, for $n \ge 3$, Bony and H\"afner used the Mourre method to establish a low frequency resolvent bound for $-c(x)\sum_{i,j =1}^n \partial_{x_i} g_{ij}(x) \partial_{x_j}$, provided $c$ and the $g_{ij}$ are smooth with $|\partial^\alpha_x(1 - c)| +  \sum_{i,j =1}^n|\partial^\alpha_x g_{i,j}|  = O(\langle x \rangle^{-\delta - |\alpha|})$ for some $\delta> 0$ and all multi-indices $\alpha$ \cite{boha10}.

Logarithmic decay arises in a variety of contexts, including transmission problems \cite{be03}, damped waves \cite{bujo16, wa24}, and general relativity \cite{hosm13, mo16, ga19}. Its significance lies in the fact that it is often the optimal decay rate, particularly in settings where no nontrapping assumption is imposed on the dynamics generated by the Hamiltonian of $-c^2\Delta$. In our case, however, the Hamiltonian flow may not be well-defined, since $c$ may lack the regularity required for classical existence and uniqueness. More broadly, the saturation of logarithmic decay is closely tied to the presence of resonances exponentially close to the real axis. This connection was first identified by Ralston in the case of radial wavespeeds \cite{ra71}, with related constructions developed in \cite{hosm14, ke16, ke20, ben21, dmms21, kugu21}.

We briefly outline key developments in the study of local energy decay for solutions to the wave equation. Foundational results are due to Morawetz and her work with Lax and Philips \cite{mor61, mor62, lmp63}, establishing decay of waves exterior to nontrapping obstacles. Beginning with Keel, Smith, and Sogge \cite{kss02}, local energy decay became a standard tool in the analysis of nonlinear wave equations. Local energy decay is deeply connected with resolvent behavior \cite{mst20,lsv25} and has been used to prove Strichartz estimates (e.g. Marzuola-Metcalfe-Tataru-Tohaneanu \cite{mmtt}) as well as pointwise decay estimates (e.g. Tataru). We conclude by pointing to a broader body of influential work and surveys that chart the development of wave decay theory: \cite[Epilogue]{lp89}, \cite[Chapter X]{va}, \cite{dr}, \cite{tat}, \cite{hizw17}, \cite{dz}, \cite{vasy}, \cite{sch21}, \cite{klainerman}, \cite{hin}, \cite{lo24}.

\subsection{Statement of main theorem and strategy of proof} \label{stmt of main thm subsection}

Our proof of weighted energy decay proceeds via resolvent estimates and spectral methods. The spatial component $-c^2 \Delta$ of the wave operator is formally symmetric on the weighted space $L^2_c(\R^n) \defeq L^2(\R^n; c^{-2}(x)dx)$, and is self-adjoint and nonnegative when equipped with domain the Sobolev space $H^2(\R^n)$ \cite[Proposition A.1]{sh18}. By the Kato-Rellich Theorem, the same remains true if one adds $V \in L^\infty(\R^n ; [0, \infty))$. Note that $L_c^2(\R^n)$ coincides with the standard space $L^2(\R^n) = L^2(\R^n ; dx)$ since both $c$ and $c^{-1}$ are bounded.

Setting $G \defeq -c^2 \Delta + V$, the solution $u(\cdot, t)$ to \eqref{wave eqn} can be expressed via the spectral theorem as 
\begin{equation*}
u(\cdot, t) = \cos(t \sqrt{G}) u_0 + \frac{\sin(t \sqrt{G})}{\sqrt{G}} u_1.
\end{equation*}
To quantify decay, we localize spectrally to a window whose width grows slowly in time. Let $\mathbf{1}_I$ denote the characteristic function of an interval $I \subseteq \R$. Our main technical result is

\begin{theorem} \label{LED thm}
Let $s > 1$ and $m \ge 0$. Assume $c$ satisfies \eqref{c bd above and below}, \eqref{one minus c short range} with $\delta_0 > 2$, and \eqref{partialr c short range} with $\delta_1 > 1$. Let $V \in L^\infty(\R^n ; [0, \infty))$ and suppose further that there exist constants $C > 0$ and $\rho > 0$ such that
\begin{equation*}
|V(x)| \le C \langle x \rangle^{-\rho}
\end{equation*}
 with
\begin{equation} 
\rho > \begin{cases}
7/2 & \text{if } n = 3, \\
5 & \text{if } n = 4, \\
\max(3, n/2) & \text{if } n \ge 5.
\end{cases}
\end{equation}
In the case $n = 2$, we assume $V \equiv 0$. If, $n =4$, additionally assume that $V$ is Lipschitz, in the sense that the distributional derivatives $\partial_{x_j} V$, $1 \le j \le 4$, belong to $L^\infty(\R^4)$. 

There exist $C, \, \nu, \, \gamma > 0$ so that for $|t| \gg 1$ and $A = A(t) = \gamma \log |t| $,
\begin{align}
\|\langle x \rangle^{-s}  \mathbf{1}_{[0, A^2]}(G) G^{m} \cos(t \sqrt{G})  \langle x \rangle^{-s} \|_{L^2(\R^n) \to H^1(\R^n)} &\le C t^{-\nu}, \label{LED cosine propagator low freq} \\
\|\langle x \rangle^{-s} \mathbf{1}_{[0, A^2]}(G) G^{m} \tfrac{\sin(t \sqrt{G})}{\sqrt{G}} \langle x \rangle^{-s} \|_{L^2(\R^n) \to H^1(\R^n)} &\le C t^{-\nu} \label{LED sine propagator low freq}.
\end{align}
\end{theorem}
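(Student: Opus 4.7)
My plan is to reduce the theorem to estimates on the boundary values of the weighted resolvent via the spectral theorem and Stone's formula. Writing $R(z) = (G - z)^{-1}$, one has
\begin{equation*}
\mathbf{1}_{[0,A^2]}(G) G^m \cos(t\sqrt{G}) = \frac{1}{\pi i} \int_0^A \cos(t\lambda)\, \lambda^{2m+1} \bigl[R(\lambda^2 + i0) - R(\lambda^2 - i0)\bigr] d\lambda,
\end{equation*}
and an analogous formula with $\lambda^{2m}$ for $G^m \sin(t\sqrt{G})/\sqrt{G}$. The task thus reduces to bounding an oscillatory integral of the form $\int_0^A e^{\pm i t \lambda}\, \lambda^{k}\, T(\lambda)\, d\lambda$ in the operator norm $L^2 \to H^1$, where $T(\lambda) \defeq \langle x \rangle^{-s} R(\lambda^2 + i 0) \langle x \rangle^{-s}$ and $k \in \{2m, 2m+1\}$.

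The next step is to marshal the two-regime resolvent analysis signposted in the introduction. For $\lambda$ in a fixed small neighborhood of $0$, the Neumann series representation of Section \ref{resolvent behavior section} relates $R$ to the resolvent of $-\Delta + V$, whose low-frequency asymptotics are recorded in Appendices \ref{free resolv low freq appendix}--\ref{perturb resolv low freq appendix}; this yields H\"older continuity of $T$ as an operator $L^2 \to H^1$, with some exponent $\alpha_0 > 0$, modulo a $\log \lambda$ remainder at $\lambda = 0$ in dimension two. For $\lambda$ bounded away from zero but at most $A$, the Carleman estimate of Section \ref{Carleman estimate section} provides both an operator bound $\|T(\lambda)\|_{L^2 \to H^1} \lesssim e^{K\lambda}$ and a local H\"older bound of the same order with some exponent $\alpha_1 > 0$. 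Merging the two regimes yields uniform H\"older continuity of $T$ on $[0, A]$ with exponent $\alpha = \min(\alpha_0, \alpha_1)$ and total constant bounded by $C e^{KA}$, with an additional logarithmic factor at $\lambda = 0$ in dimension two.

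A standard oscillatory-integral argument (translate $\lambda \mapsto \lambda + \pi/t$ and compare with the original integral) then yields
\begin{equation*}
\Bigl\| \int_0^A e^{\pm it\lambda}\, \lambda^{k}\, T(\lambda)\, d\lambda \Bigr\|_{L^2 \to H^1} \lesssim A^{N}\, e^{KA}\, t^{-\alpha}\, (\log t)^{\beta},
\end{equation*}
for some $N$ depending on $m$ and some $\beta \ge 0$ absorbing the two-dimensional logarithmic defect. Setting $A = \gamma \log |t|$ with $\gamma > 0$ small enough that $K\gamma < \alpha$ converts $e^{KA}$ into $|t|^{K\gamma}$ and the polynomial factor in $A$ into $(\log|t|)^N$; these are absorbed into $t^{-\nu}$ for any $\nu < \alpha - K\gamma$, yielding \eqref{LED cosine propagator low freq}--\eqref{LED sine propagator low freq}.

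I expect the main obstacle to be the quantitative matching of the low- and intermediate-frequency regimes: tracking the $\lambda$-dependence of the Neumann-series remainder and verifying that it joins smoothly with the Carleman-based H\"older bound to produce a uniform H\"older exponent on all of $[0, A]$. The two-dimensional case near zero, where the modulus of continuity is only logarithmic and $V$ is forced to vanish, requires extra care; so does the sine propagator at $\lambda = 0$, where one must exploit the factor of $\lambda$ produced by the change of variables $\mu = \lambda^2$ to cancel the $1/\sqrt{G}$ singularity and ensure integrability at the origin.
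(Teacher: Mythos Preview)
Your proposal is correct and matches the paper's proof: Stone's formula reduces the propagator to an oscillatory integral in $\lambda$, the two-regime resolvent analysis (Lemma~\ref{low freq exp lem} near zero, Corollary~\ref{cont ext cor} away from zero) supplies H\"older continuity of the weighted boundary values with constant $e^{CA}$, and choosing $A=\gamma\log|t|$ with $\gamma$ small converts $e^{CA}$ into an acceptable power of $t$. The only variation is cosmetic---the paper extracts oscillatory decay by mollifying the integrand and integrating by parts once (following Cardoso--Vodev) rather than by your phase shift $\lambda\mapsto\lambda+\pi/t$---and your concern about the two-dimensional logarithm is resolved there by observing that the $\log(-i\lambda)$ and $\log(i\lambda)$ contributions to $R(\lambda^2\pm i0)$ differ by a constant, so that piece reduces to a scalar integral $\int_0^{\kappa/2}\lambda^{2m}\sin(t\lambda)\,d\lambda$ with no $(\log t)^\beta$ loss.
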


\begin{remark}
Since $u(-t, \cdot) = \cos(t \sqrt{G})u_0 + (\sin(t \sqrt{G})/\sqrt{G})(-u_1)$, it suffices to establish \eqref{LED cosine propagator low freq} and \eqref{LED sine propagator low freq} for $t \gg 1$.
\end{remark}

To prove Theorem \ref{LED thm}, we establish H\"older regularity of the boundary values on the real axis of the weighted resolvent $\langle x \rangle^{-s} (-c^2 \Delta + V - \lambda^2)^{-1} \langle x \rangle^{-s}$. Our analysis is split into two frequency regimes.

At low frequency, we construct the required resolvent expansion in three steps. First, the weight condition $s>1$ provides the necessary H\"older continuity for the free resolvent, modulo a logarithmic term in dimension two (Appendix \ref{free resolv low freq appendix}). When $n \ge 3$, the short-range assumptions on $V$ then allow one to transfer this property to the resolvent for $-\Delta+V$ (Appendix \ref{perturb resolv low freq appendix}). Finally, the condition $\delta_0 > 2$ ensures that for small $\lambda$, a Neumann series converges, which relates the resolvent of $-\Delta+V$ to that of the full operator $-c^2\Delta+V$ (Section \ref{resolvent behavior section}).

For frequencies away from zero, we adopt a semiclassical perspective. A formal calculation, treating $\lambda$ as real and letting $h = |\lambda|^{-1}$, motivates relating the original resolvent to a semiclassical one:
\begin{equation*}
\begin{gathered}
(-c^2\Delta + V - \lambda^2)^{-1} = h^{-2} (-h^2 \Delta + V_c + h^2V - 1)^{-1},\\
h \defeq |\lambda|^{-1}, \qquad V_c \defeq 1 - c^{-2}, 
\end{gathered}
\end{equation*}
The necessary H\"older regularity of the associated resolvent is established in Section \ref{Holder cont section} using a Carleman estimate developed in subsections \ref{preliminary calculations subsection} through \ref{Carleman est subsection}.

A feature of the Carleman estimate is its uniformity: it holds for all $h \in (0,h_0]$ with arbitrary $h_0 > 0$, rather than only for sufficiently small $h_0$, as common in the literature (see e.g., \cite{ob24, sh24}). This flexibility stems from an ODE-based construction adapted from \cite[Proposition 3.1]{dadeh16}, which enables control of the second derivative of the Carleman phase. Introduced in \cite{dadeh16} to handle wavespeed discontinuities, this technique plays a central role in our setting. In addition, we use a spatial weight similar to that in \cite{ob24}, which further facilitates explicit computations and contributes to the uniformity of the estimate.

Another aspect of the Carleman phase is that it is constant outside a compact set. It is well known this leads to a exterior weighted estimate for operators such as $-h^2 \Delta + V_c + h^2V - E$, with $E > 0$. If $V_c$ has compact support, $V \equiv 0$, and $n \ge 3$, Remark \ref{optimal ext weight remark} shows our exterior estimate \eqref{ext Carleman est} holds if the weight vanishes on a ball centered at the origin, whose radius grows like $E^{-1/2}$ as $E \to 0$. This scaling is sharp in specific examples \cite{daji20}. The same $E$-dependence was previously obtained for compactly supported potentials that are Lipschitz in the radial variable \cite{gash22b, ob24}. The novelty here is that the same scaling holds for potentials that may be discontinuous along a fixed direction on the sphere, as described above.

The final step of the proof of Theorem \ref{LED thm} is Section \ref{proof LED thm section}, where we combine the H\"older regularity with Stone's formula to obtain \eqref{LED cosine propagator low freq} and \eqref{LED sine propagator low freq}. This approach is due to Cardoso and Vodev \cite[Section 2]{cavo04}.

\subsection{Future directions}
It is natural to ask whether Theorem \ref{brief decay} still holds for smaller values of $\delta_0$ or $\delta_1$, or under weaker regularity assumptions on $c$. Another extension would be to incorporate a potential in dimension two. In our framework, this creates a technical trade-off, requiring a stronger wavespeed decay assumption ($\delta_0 > 4$). A different approach, building on the resolvent expansions in \cite{chda25, jene01}, may be necessary to overcome this.

Separately, one could consider low regularity analogues of the perturbations in \cite{boha10} or the inclusion of an obstacle. Progress on these latter problems would likely require a new type of Carleman estimate, one less reliant on separation of variables. References relevant to these potential developments include \cite{cavo02, rota15, vo20}.

\subsection{List of Notations} 
\label{remark_Notation}
\begin{itemize}
    \item We use $(r, \theta) = (|x|, x/|x|) \in (0, \infty) \times \US^{n-1}$ for polar coordinates on $\R^n \backslash \{0\}$. 
    \item For $u$ defined on a subset of $\R^n$, we write $u(r, \theta) \defeq u(r \theta)$ and $u' \defeq \partial_ru$ for radial derivatives.
    \item $\langle x \rangle \defeq (1 + |x|^2)^{1/2}$.
    \item For $r > 0$, $B(0,r) \defeq \{ x \in \R^n : |x| < r \}$.
    \item  $\mathbf{1}_I$ is the characteristic function of $I \subseteq \R$.
\end{itemize}

\section{Control of resolvent at all frequencies} \label{resolvent behavior section}

In this section, we describe the behavior of the weighted resolvent $ \langle \cdot \rangle^{-s}(G - \lambda^2)^{-1} \langle \cdot \rangle^{-s}$ for appropriate $s > 1/2$ and frequencies $\lambda$ in the upper half plane, where
$$G \defeq -c^2(x) \Delta + V(x)$$ 
with $c$ obeys \eqref{c bd above and below} and $V \in L^\infty(\R^n ; [0, \infty))$. We impose additional conditions on $c$ and $V$ depending on whether we are analyzing the resolvent near or away from zero frequency. 

\subsection{Resolvent expansion around zero frequency}
\begin{lemma} \label{low freq exp lem}
Suppose $c$ satisfies \eqref{c bd above and below} and \eqref{one minus c short range} with $\delta_0 > 2$. Let $V$ obey the same conditions as in the statement of Theorem \ref{LED thm}. Then for any $s > 1$, there exists $\kappa > 0$ so that in the set $\mathcal{O}_\kappa = \{\lambda \in \C : \imag \lambda >0, \, |\lambda| < \kappa \}$, the mapping
 \begin{equation} \label{low freq exp}
 \lambda \mapsto A_s(\lambda) \defeq \langle \cdot \rangle^{-s} \Big( (G - \lambda^2)^{-1} +  \frac{1}{2\pi} \log \Big(\frac{-i\lambda |x -y|}{2}\Big) c^{-2}\cdot \mathbf{1}_{\{2\}}(n) \Big) \langle \cdot \rangle^{-s}
 \end{equation}
 is H\"older continuous with values in the spaces of bounded operators $L^2(\R^n) \to H^2(\R^n)$, and thus extends continuously to $(-\kappa, \kappa)$. 
 \end{lemma}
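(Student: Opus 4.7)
The plan is to transfer the regularity of the weighted resolvent of the reference operator $\tilde G \defeq -\Delta + V$, provided by Appendices \ref{free resolv low freq appendix} and \ref{perturb resolv low freq appendix}, to the full operator $G$. I would start from the algebraic identity
\begin{equation*}
c^{-2}(G - \lambda^2) = (\tilde G - \lambda^2) + (c^{-2} - 1)(V - \lambda^2),
\end{equation*}
obtained by left-multiplying $G - \lambda^2$ by $c^{-2}$. Setting $R_V(\lambda) \defeq (\tilde G - \lambda^2)^{-1}$, this rearranges as $c^{-2}(G - \lambda^2) = (\tilde G - \lambda^2)[I + K(\lambda)]$ with
\begin{equation*}
K(\lambda) \defeq R_V(\lambda)(c^{-2} - 1)(V - \lambda^2),
\end{equation*}
so that formally $(G - \lambda^2)^{-1} = [I + K(\lambda)]^{-1} R_V(\lambda) c^{-2}$.

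Since $\delta_0 > 2$, I would then choose $s$ with $1 < s \le \delta_0/2$ and split $c^{-2} - 1 = \langle x \rangle^{-2s}\beta$ with $\beta \in L^\infty(\R^n)$. Combined with the uniform weighted bound $\|\langle x\rangle^{-s} R_V(\lambda)\langle x\rangle^{-s}\|_{L^2 \to H^2} \le C$ from the appendices, this realizes $K(\lambda)$ as a bounded operator on a suitable weighted $L^2$ space (such as $\langle x \rangle^s L^2(\R^n)$), uniformly in $\lambda \in \mathcal O_\kappa$, and H\"older continuous in $\lambda$ modulo the logarithmic remainder in $n = 2$. The short range decay also renders $K(\lambda)$ compact on this space.

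The crux is the invertibility of $I + K(\lambda)$ for small $|\lambda|$. First I would show $I + K(0)$ is invertible: any $u \in \ker(I + K(0))$ produces a decaying distributional solution to $Gu = 0$, which self-adjointness and nonnegativity of $G$ together with unique continuation force to vanish identically. Combined with the compactness of $K(0)$, the Fredholm alternative yields invertibility. For $\lambda \in \mathcal O_\kappa$ with $\kappa$ small enough, H\"older continuity of $K(\lambda)$ at $\lambda = 0$ makes
\begin{equation*}
[I + K(\lambda)]^{-1} = [I + K(0)]^{-1}\sum_{j=0}^\infty \bigl(-(K(\lambda) - K(0))\,[I + K(0)]^{-1}\bigr)^j
\end{equation*}
a convergent Neumann series whose sum inherits the same H\"older continuity. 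Post-composing with $R_V(\lambda) c^{-2}$ and conjugating by $\langle x \rangle^{-s}$ then delivers the claimed regularity of $A_s(\lambda)$ as an $L^2 \to H^2$-valued map and its continuous extension to $(-\kappa, \kappa)$; the $\log$ subtraction in $n = 2$ is inherited from that of the free resolvent.

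The hardest point will be the invertibility of $I + K(0)$, together with the bookkeeping needed so that the Neumann expansion converges in an operator topology compatible with the $L^2 \to H^2$ target. This requires the H\"older continuity of $R_V(\lambda)$ from the appendices at the $L^2 \to H^2$ level, not merely $L^2 \to L^2$, so that each factor $[I + K(0)]^{-1}(\cdots)$ in the series continues to map into $H^2$ with quantitative control.
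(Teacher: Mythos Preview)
Your overall Neumann-series strategy is sound and close to the paper's, but the choice of reference operator differs, and that difference is what makes the paper's argument short while yours has a genuine gap at $\lambda=0$.

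The paper compares $G$ not to $-\Delta+V$ but to $-\Delta+c^{-2}V$. Writing $V_c\defeq 1-c^{-2}$, one has $c^{-2}(G-\lambda^2)=(-\Delta+c^{-2}V-\lambda^2)+\lambda^2V_c$, so the remainder is
\[
K(\lambda)=\lambda^2\,\langle x\rangle^{2s}V_c\,\langle x\rangle^{-s}(-\Delta+c^{-2}V-\lambda^2)^{-1}\langle x\rangle^{-s}.
\]
The explicit $\lambda^2$ factor gives $\|K(\lambda)\|_{L^2\to L^2}\to 0$ as $\lambda\to 0$ (even in $n=2$, where the weighted free resolvent is only $O(\log\lambda)$). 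Invertibility of $I+K(\lambda)$ for small $|\lambda|$ is then automatic by Neumann series; no Fredholm theory, no compactness, and no zero-resonance analysis for $G$ is needed. Since $c^{-2}V$ satisfies the same decay hypotheses as $V$, the appendix results apply verbatim with $c^{-2}V$ in place of $V$.

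Your reference operator $-\Delta+V$ leaves $K(0)=R_V(0)(c^{-2}-1)V$, which is nonzero whenever $n\ge 3$, so you must prove $I+K(0)$ invertible. Your argument for this has a gap: you invoke unique continuation for $G=-c^2\Delta+V$, but under the hypotheses here $c$ is merely $L^\infty$ with $\partial_r c\in L^\infty$, and unique continuation for such low-regularity principal coefficients is not available (and in any case is not the natural tool). The correct route is nonnegativity: from $u=-R_V(0)(c^{-2}-1)Vu$ and the very fast decay of $(c^{-2}-1)V$ one can bootstrap to $u\in L^2\cap H^2$, whence $0=\langle Gu,u\rangle_{L^2_c}=\|\nabla u\|_{L^2}^2+\langle Vu,u\rangle_{L^2_c}$ forces $u$ constant, hence zero. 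This is fixable, but it is precisely the extra labor that the paper's choice of reference operator eliminates.
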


\begin{proof}
    Without loss of generality, we take $1 < s < \delta_0 /2$.
Throughout the proof, $\lambda$ varies in the set $\mathcal{O}_\kappa$,
where $\kappa > 0$ will be taken sufficiently small as needed. Also recall that here $V\equiv 0$ when the dimension $n=2$.

We shall arrive at \eqref{low freq exp} by a resolvent remainder argument, which involves a Neumann series that converges for $|\lambda|$ small. In this way, $\langle \cdot \rangle^{-s}(-c^2\Delta + V - \lambda^2)^{-1} \langle \cdot \rangle^{-s}$ can be related to the resolvent expansion for $\langle \cdot \rangle^{-s}(-\Delta + V  - \lambda^2)^{-1} \langle \cdot \rangle^{-s}$, which is described in Appendices \ref{free resolv low freq appendix} and \ref{perturb resolv low freq appendix}.  A similar approach, when $1 - c$ has compact support, was taken \cite[Section 4]{sh18}.

For $\lambda \in \mathcal{O}_\kappa$, 
\begin{equation*}
\begin{gathered}
 \langle x \rangle^{-s} (-c^2 \Delta + V - \lambda^2)^{-1}  \langle x \rangle^{-s}= \langle x \rangle^{-s}  ( - \Delta + c^{-2} V  - c^{-2} \lambda^2)^{-1} \langle x \rangle^{-s} c^{-2}.
\end{gathered}
\end{equation*}
So it suffices to find a low frequency resolvent expansion for $\langle x \rangle^{-s}  ( - \Delta + c^{-2} V  - c^{-2} \lambda^2)^{-1} \langle x \rangle^{-s}$. To this end, put $V_c \defeq 1  - c^{-2}$, and observe
\begin{equation*}
\begin{split}
&(-\Delta + c^{-2} V - c^{-2} \lambda^2)(-\Delta  + c^{-2} V  -\lambda^2)^{-1} \langle x \rangle^{-s} \\
 = & \, (-\Delta +  c^{-2} V  +  \lambda^2 V_c - 
\lambda^2)(-\Delta +  c^{-2} V  -\lambda^2)^{-1}  \langle x \rangle^{-s}  \\
 = & \, \langle x \rangle^{-s}  + \lambda^2 V_c (-\Delta + c^{-2} V - \lambda^2)^{-1}  \langle x \rangle^{-s} \\
 = & \, \langle x \rangle^{-s} (I + K(\lambda) ),
\end{split}
\end{equation*}
where
\begin{equation}
    \label{K}
    K(\lambda) = \lambda^2 \langle x \rangle^{2s} V_c \langle x \rangle^{-s}  (-\Delta +  c^{-2} V  - \lambda^2)^{-1}  \langle x \rangle^{-s}.
\end{equation}
As $1 < s < \delta_0/2$, $\langle x \rangle^{2s} V_c$ is a bounded multiplication operator on $L^2(\R^n)$. This yields
\begin{equation} \label{almost to remainder formula}
\langle x \rangle^{-s} (-\Delta +  c^{-2} V -\lambda^2)^{-1} \langle x \rangle^{-s} 
= \langle x \rangle^{-s} (-\Delta +  c^{-2} V  - c^{-2}\lambda^2)^{-1} \langle x \rangle^{-s} (I + K(\lambda)).
\end{equation}

As shown in Appendices \ref{free resolv low freq appendix} ($n=2$) and \ref{perturb resolv low freq appendix} ($n \ge 3$), we have H\"older continuity $L^2(\R^n) \to H^2(\R^n)$ of
\begin{equation} \label{tilde A}
\tilde{A}_s(\lambda) \defeq \begin{cases}
\langle \cdot \rangle^{-s} (-\Delta  - \lambda^2)^{-1} \langle \cdot \rangle^{-s} + \frac{1}{2\pi}  \langle \cdot \rangle^{-s}\log\big(\frac{-i \lambda |x-y|}{2} \big)  \langle \cdot \rangle^{-s} & n= 2, \\
\langle \cdot \rangle^{-s} (-\Delta + c^{-2} V  - \lambda^2)^{-1} \langle \cdot \rangle^{-s} & n \ge 3. 
\end{cases} 
\end{equation}
Thus, \eqref{K} and \eqref{tilde A} imply that $K(\lambda)$ is also H\"older continuous on $\overline{\mathcal{O}}_\kappa$. Moreover, $\kappa$ may be taken small enough so that   $\|K(\lambda) \|_{L^2 \to L^2} < 1$ , so $I + K(\lambda)$ is invertible by Neumann series. Observe that $(I + K(\lambda))^{-1}$ is also H\"older continuous by the identity,
\begin{equation*}
(I + K(\lambda_2))^{-1} - (I + K(\lambda_1))^{-1} = (I + K(\lambda_1))^{-1} (K(\lambda_1) - K(\lambda_2)) (I + K(\lambda_2))^{-1}.
\end{equation*}
Consequently, by \eqref{almost to remainder formula}, for $\lambda \in \overline{\mathcal{O}}_\kappa$,
\begin{equation} \label{final low freq identity}
\begin{split}
& \, \langle x \rangle^{-s} (-\Delta + c^{-2} V - c^{-2} \lambda^2)^{-1} \langle x \rangle^{-s} \\
= & \, \langle x \rangle^{-s} (-\Delta + c^{-2} V  -\lambda^2)^{-1}  \langle x \rangle^{-s} (I + K(\lambda))^{-1}\\
= & \, \Big(\tilde{A}_s(\lambda) - \frac{1}{2\pi}  \langle \cdot \rangle^{-s}\log\big(\frac{-i \lambda |x-y|}{2} \big)  \langle \cdot \rangle^{-s} \mathbf{1}_{\{2\}}(n) \Big)( I - K(\lambda)(I + K(\lambda))^{-1}), 
\end{split}
\end{equation}
By \eqref{K}, $\frac{1}{2\pi}  \langle \cdot \rangle^{-s}\log\big(\frac{-i \lambda |x-y|}{2} \big)  \langle \cdot \rangle^{-s} \mathbf{1}_{\{2\}}(n) K(\lambda)$ is H\"older continuous. So \eqref{final low freq identity} may be written more succinctly:
\begin{equation*}
\langle x \rangle^{-s} (-\Delta + c^{-2} V - c^{-2} \lambda^2)^{-1} \langle x \rangle^{-s} = B_s(\lambda) - \frac{1}{2\pi}  \langle \cdot \rangle^{-s}\log\big(\frac{-i \lambda |x-y|}{2} \big)  \langle \cdot \rangle^{-s} \mathbf{1}_{\{2\}}(n), \qquad \lambda \in \overline{\mathcal{O}}_\kappa.
\end{equation*}
for some $B_s : L^2(\R^n) \to H^2(\R^n)$ H\"older continuous.

\end{proof}

\subsection{Resolvent estimate away from zero frequency}

\label{semiclassical prelim section}

We develop a resolvent estimate for $G$ away from zero frequency by rescaling semiclassically. Let $\lambda \in \C$ with $|\real \lambda| > \lambda_0$ and $0 \le \imag \lambda \le \ep_0$ for some $\lambda_0, \, \ep_0 > 0$. Make the following identifications, motivated by Section \ref{Carleman estimate section}:
\begin{equation} \label{semiclassical rescale}
\begin{gathered}
 h_0 \defeq \lambda_0^{-1}, \qquad h \defeq |\real \lambda|^{-1}, \qquad \ep \defeq \imag \lambda, \\ 
  V_L \defeq (h^2 \ep^2 -1)c^{-2}, \qquad V_S = h^2 c^{-2}V, \qquad W_L = -2 \sgn(\real \lambda) h \ep c^{-2}.
\end{gathered}
\end{equation}
We arrive at

\begin{equation*}
\begin{split}
G - \lambda^2  &= -c^2 \Delta +V - \lambda^2 \\
&= (\real \lambda)^{2} c^2 (- (\real \lambda)^{-2}\Delta +h^2c^{-2}V - c^{-2} + c^{-2}(\real \lambda)^{-2}(\imag \lambda)^2\\
&- 2i  \sgn(\real \lambda) |\real \lambda|^{-1} \imag \lambda c^{-2}) \\
&=  h^{-2} c^2(-h^2 \Delta + V_L + V_S  +iW_L),
\end{split}
\end{equation*}
with $h$ varying $(0, h_0]$ and $\ep$ in $[0, \ep_0]$.

Suppose $\ep_0$ is fixed small enough, depending on $h_0$, so that $a \defeq 1 - (\sup_{\R^n}c^{-2})h^2_0 \ep^2_0 > 0$. Then the long range potential $V_L$ possesses the properties \eqref{V long range} and \eqref{V prime long range} requested of it subsection \ref{regularity and decay of potential subsection} because
\begin{equation*}
V_L = 1 - c^{-2} - (1 - c^{-2} h^2 \ep^2) \le 1 - c^{-2} - a.
\end{equation*}
Moreover, $V_S$ obeys \eqref{V short range}, while $W_L$ satisfies \eqref{W long range} and \eqref{ratio c2 over c1}. Thus, in keeping with the notation of Section \ref{Carleman estimate section}, put
\begin{equation} \label{P}
P = P(\ep, h) \defeq -h^2 \Delta + V_L + V_S +iW_L, 
\end{equation}
so that, for $\imag \lambda > 0$,
\begin{equation} \label{resolv of G alt form}
    (G - \lambda^2)^{-1} =  h^2 P^{-1}(\ep,h)c^{-2}.
\end{equation}

For brevity of notation, put $P^{-1} = P^{-1}(\ep, h)$. The following resolvent estimate is a consequence of the semiclassical Carleman estimate proved in Section \ref{Carleman estimate section}.

\begin{lemma} \label{lap lemma}
Fix $s > 1/2, \, h_0 > 0$. Suppose $c$ obeys \eqref{c bd above and below}, \eqref{one minus c short range} for $\delta_0 > 0$, and \eqref{partialr c short range} for $\delta_1 > 1$. Let $V \in L^\infty(\R^n ; [0, \infty)$ satisfy 
\begin{equation} \label{srp V decay}
|V(x)| \le C\langle x \rangle^{-\rho}.
\end{equation}
 for some $C >0$ and $\rho > 2$. Let $\ep_0 > 0$ be sufficiently small so that $1 - (\sup_{\R^n}c^{-2})h^2_0 \ep^2_0 > 0$. There exists $C > 0$ so that for all $h \in (0, h_0]$, $\ep \in (0, \ep_0]$, and multi-indices $\alpha_1, \, \alpha_2$ with $|\alpha_1| + |\alpha_2| \le 2$,
\begin{equation} \label{lap P}
\|  \langle x \rangle^{-s} \partial^{\alpha_2}_x P^{-1} \partial^{\alpha_1}_x  \langle x \rangle^{-s} \|_{L^2(\R^n) \to L^2(\R^n) } \le e^{C/h}. 
\end{equation}
\end{lemma}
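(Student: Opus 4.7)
The plan is to deduce Lemma \ref{lap lemma} from the Carleman estimate of Section \ref{Carleman estimate section} and the complementary exterior weighted estimate \eqref{ext Carleman est} announced in Section \ref{stmt of main thm subsection}, combined with elliptic regularity and a duality argument. Schematically, when applied to a function $u$ together with cutoffs $\chi,\chi'$ adapted to the support of the Carleman phase $\varphi$, the Carleman inequality will read
$$
\|e^{\varphi/h}\chi u\|_{L^2}^2 + h^2\|e^{\varphi/h}\chi\nabla u\|_{L^2}^2 \le C\|e^{\varphi/h}\chi' Pu\|_{L^2}^2 + (\text{exterior contribution}),
$$
with the exterior contribution controlled by \eqref{ext Carleman est}. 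Because $\varphi$ is bounded and constant outside a compact set, the pointwise bounds $e^{\pm\varphi/h}\le e^{C/h}$ allow conversion into a polynomially weighted statement, with the factor $e^{C/h}$ encoding the ratio between the Carleman weight and $\langle x\rangle^{-s}$.

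First, I would cut down the number of cases via duality. Since $P^\ast = -h^2\Delta + V_L + V_S - iW_L$ satisfies exactly the same hypotheses as $P$, it obeys an analogous Carleman inequality, and
$$
\bigl\|\langle x\rangle^{-s}\partial^{\alpha_2}_x P^{-1}\partial^{\alpha_1}_x\langle x\rangle^{-s}\bigr\|_{L^2\to L^2} = \bigl\|\langle x\rangle^{-s}\partial^{\alpha_1}_x(P^\ast)^{-1}\partial^{\alpha_2}_x\langle x\rangle^{-s}\bigr\|_{L^2\to L^2},
$$
so it suffices to treat $(\alpha_1,\alpha_2) \in \{(0,0),(0,1),(0,2),(1,1)\}$. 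For $\alpha_1 = 0$, set $u = P^{-1}\langle x\rangle^{-s}f$, so that $Pu = \langle x\rangle^{-s}f$; the Carleman estimate applied to $u$, patched with \eqref{ext Carleman est}, and then converted to the polynomial weight yields
$$
\|\langle x\rangle^{-s}u\|_{L^2} + h\|\langle x\rangle^{-s}\nabla u\|_{L^2}\le Ce^{C/h}\|f\|_{L^2},
$$
which settles $|\alpha_2|\le 1$ after absorbing the factor of $h$ into $e^{C/h}$.

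For $|\alpha_2| = 2$ I would rearrange $-h^2\Delta u = \langle x\rangle^{-s}f - (V_L + V_S + iW_L)u$ and use the $L^\infty$ bounds on $V_L,V_S,W_L$ to obtain $\|\langle x\rangle^{-s}\Delta u\|_{L^2}\le Ch^{-2}(\|f\|_{L^2}+\|\langle x\rangle^{-s}u\|_{L^2})$, again absorbing $h^{-2}$ into $e^{C/h}$; standard elliptic regularity on $\R^n$ then passes from $\Delta u$ to arbitrary second-order derivatives, with commutators $[\partial^{\alpha_2}_x,\langle x\rangle^{-s}]$ that appear when re-inserting the weight being strictly lower order and subsumed by the previous cases. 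The remaining mixed case $|\alpha_1| = |\alpha_2| = 1$ can be treated by the commutator identity
$$
\partial_j P^{-1}\partial_k = P^{-1}\partial_j\partial_k - P^{-1}(\partial_j V_L + \partial_j V_S + i\partial_j W_L)P^{-1}\partial_k,
$$
reducing to bounds already established, or alternatively by interpolation after upgrading the $\alpha_1 = 0$ estimates via duality to a two-sided $H^{-1}\to H^1$ bound for $\langle x\rangle^{-s}P^{-1}\langle x\rangle^{-s}$.

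I expect the principal difficulty to be the coordination of the interior Carleman estimate with the exterior estimate \eqref{ext Carleman est}. Because the Carleman phase is constant outside a fixed ball, the interior estimate controls $u$ only there, while the tail must be handled separately; joining the two through cutoffs without introducing factors worse than $e^{C/h}$, and in a way compatible with the polynomial weight $\langle x\rangle^{-s}$ on both sides, is the delicate bookkeeping. Ensuring the combined inequality holds uniformly across the entire parameter range $h\in(0,h_0]$ and $\ep\in(0,\ep_0]$, rather than merely for small $h$ or small $\ep$, adds a further layer that must be tracked carefully, in particular because the parameters $V_L$, $W_L$ themselves depend on $h\ep$ through \eqref{semiclassical rescale}.
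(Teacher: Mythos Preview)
Your overall architecture---Carleman estimate for the base case $\alpha_1 = 0$, elliptic regularity to reach $|\alpha_2| = 2$, duality to swap the roles of $\alpha_1$ and $\alpha_2$---matches the paper's. Two points deserve comment.

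First, your commutator route for $|\alpha_1| = |\alpha_2| = 1$ has a genuine gap: the identity $[\partial_j, P] = \partial_j(V_L + V_S + iW_L)$ requires $\partial_{x_j} c^{-2} \in L^\infty$ for each coordinate direction $j$, but the hypotheses only control the \emph{radial} derivative $\partial_r c$ via \eqref{partialr c short range}. Full gradient regularity of $c$ is deliberately not assumed---relaxing it is part of the point of the paper. Your interpolation alternative is viable in principle; the paper instead handles this case directly by setting $u = \langle x\rangle^{-s} P^{-1}\langle x\rangle^{-s}\partial^{\alpha_1}_x f$, pairing the equation $Pu = \langle x\rangle^{-2s}\partial^{\alpha_1}_x f + [-h^2\Delta, \langle x\rangle^{-s}]\langle x\rangle^s u$ with $\bar u$, and integrating by parts. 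This uses only $V_L, V_S, W_L \in L^\infty$; no derivatives of the potential enter.

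Second, the ``principal difficulty'' you anticipate---gluing the interior Carleman estimate to the exterior estimate through cutoffs---is already resolved within Section \ref{Carleman estimate section}. Its output \eqref{Carleman est no remainder} is a global inequality $\|\langle x\rangle^{-s} v\|_{L^2}^2 \le e^{C/h}\|\langle x\rangle^s P(\ep,h) v\|_{L^2}^2$ for $v \in C^\infty_0(\R^n)$, with no residual cutoffs and already uniform in $h \in (0,h_0]$, $\ep \in [-\ep_0, \ep_0]$. The proof of Lemma \ref{lap lemma} itself then begins from this and is short: a density argument converts it to the resolvent bound, and the remaining cases follow as you outline.
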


An immediate consequence of Lemma \ref{lap lemma} and \eqref{resolv of G alt form} is the following resolvent estimate for $G$ away from zero frequency.

\begin{corollary} 
Fix $s > 1/2, \, \lambda_0 > 0$. Assume $c$ and $V$ satisfy the same conditions as in the statement of Lemma \ref{lap lemma}. Let $\ep_0 > 0$ be sufficiently small so that  $1 - (\sup_{\R^n}c^{-2})\lambda^{-2}_0 \ep^2_0 > 0$. There exists $C > 0$ so that if $|\real \lambda| \ge \lambda_0$, $\imag \lambda \in (0, \ep_0]$
\begin{equation} \label{lap G}
\| \langle x \rangle^{-s} (G - \lambda^2)^{-1} c^2 \langle x \rangle^{-s}\|_{H^{-1}(\R^n) \to H^1(\R^n)} \le e^{C |\real \lambda|}.
\end{equation}
Here $H^{-1}(\R^n)$ denotes the dual space of $H^1(\R^n)$ with respect to the scalar product $\langle \cdot, \cdot \rangle_{L^2}$, with norm
\begin{equation*}
\| u \|_{H^{-1}} \defeq \sup_{0 \neq v \in H^1} \frac{ |\langle u,v \rangle_{L^2}|}{\| v\|_{H^1}}.
\end{equation*}

\end{corollary}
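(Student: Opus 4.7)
The plan is to reduce \eqref{lap G} to an $H^{-1}\to H^1$ bound on the weighted resolvent of $P$ via the clean cancellation in \eqref{resolv of G alt form}:
\begin{equation*}
\langle x \rangle^{-s} (G - \lambda^2)^{-1} c^2 \langle x \rangle^{-s} = h^2 \langle x \rangle^{-s} P^{-1} c^{-2} c^2 \langle x \rangle^{-s} = h^2 \langle x \rangle^{-s} P^{-1} \langle x \rangle^{-s}.
\end{equation*}
Since $h = |\real \lambda|^{-1}$ and $|\real\lambda| \ge \lambda_0$, the polynomial factor $h^2$ is absorbed into $e^{C|\real\lambda|}$, so it suffices to establish $\|\langle x \rangle^{-s} P^{-1} \langle x \rangle^{-s}\|_{H^{-1} \to H^1} \le e^{C/h}$.

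Next I would unpack both Sobolev norms into $L^2$. Write $\|w\|_{H^1}^2 = \|w\|_{L^2}^2 + \|\nabla w\|_{L^2}^2$, and for $u \in H^{-1}$ take any decomposition $u = f_0 + \sum_{j=1}^n \partial_{x_j} f_j$ with $\|f_0\|_{L^2}^2 + \sum_j \|f_j\|_{L^2}^2 \le 2\|u\|_{H^{-1}}^2$. Plugging this decomposition into $\langle x \rangle^{-s} P^{-1} \langle x \rangle^{-s} u$ and applying at most one outer derivative reduces matters to $L^2$ bounds of the form $\|\partial^{\beta}_x \langle x\rangle^{-s} P^{-1} \langle x\rangle^{-s} \partial^{\alpha}_x f_\alpha\|_{L^2}$, with multi-indices $|\alpha|,|\beta| \le 1$. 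To put these in the shape covered by Lemma \ref{lap lemma}, I would commute each weight past the adjacent derivative using $[\langle x \rangle^{-s}, \partial_{x_j}] = s x_j \langle x \rangle^{-s-2}$, which is pointwise bounded by a constant multiple of $\langle x \rangle^{-s}$. This rewrites every term as a sum of expressions $\langle x \rangle^{-s} \partial^{\alpha_2}_x P^{-1} \partial^{\alpha_1}_x \langle x \rangle^{-s}$ with $|\alpha_1| + |\alpha_2| \le 2$, plus strictly lower-order variants of the same kind. Each is bounded on $L^2$ by $e^{C/h}$ thanks to Lemma \ref{lap lemma}, and summing over the decomposition yields the claim.

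No genuine obstacle is expected here: Lemma \ref{lap lemma} was designed precisely to accommodate the derivative configurations $|\alpha_1| + |\alpha_2| \le 2$ that arise from unpacking $H^{-1} \to H^1$ into weighted $L^2$ bounds, and the only remaining burden is bookkeeping of commutators, all of which produce harmless bounded remainders. The corollary should therefore admit a proof of only a few lines, matching the authors' assertion that it is an ``immediate consequence'' of Lemma \ref{lap lemma} and \eqref{resolv of G alt form}.
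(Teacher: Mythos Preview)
Your proposal is correct and matches the paper's intended approach exactly: the authors state only that the corollary is an ``immediate consequence'' of Lemma \ref{lap lemma} and \eqref{resolv of G alt form}, and your argument---cancel the $c^{-2}c^2$, absorb the $h^2$, decompose $H^{-1}$ via $f_0 + \sum_j \partial_{x_j} f_j$, and commute $\langle x\rangle^{-s}$ past the stray derivatives to land on the configurations $|\alpha_1|+|\alpha_2|\le 2$ covered by \eqref{lap P}---is precisely the bookkeeping that makes this immediate consequence explicit.
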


\begin{proof}[Proof of Lemma \ref{lap lemma}]
Without loss of generality we take $s < 1$. Over the course of the proof, $C$ denotes a positive constant whose precise value may change, but is always independent of $h$, $\ep$, and $v \in C^\infty_0(\R^n)$.

First, we treat the case $\alpha_1 = 0$. Begin from \eqref{Carleman est no remainder} in Section \ref{Carleman estimate section}. If $h \in (0, h_0]$, $\ep \in [-\ep_0, \ep_0]$, and $v \in C^\infty_0(\R^n)$, 
\begin{equation} \label{prepare to use density}
\|\langle x \rangle^{-s} v \|^2_{L^2(\R^n)} \le e^{C/h}\|\langle x \rangle^{s} (-h^2 \Delta + V_S + V_L \pm i W_L)v \|^2_{L^2(\R^n)}.  
\end{equation}
Combining this with a well known density argument, which we provide in Appendix \ref{inequalities appendix}, implies 
\begin{equation} \label{use std density}
\| \langle x \rangle^{-s} (-h^2 \Delta + V_L + V_S \pm i W_L)^{-1} \langle x \rangle^{-s}\|_{L^2(\R^n) \to L^2(\R^n)} \le e^{C/h}, \quad h \in (0, h_0], \, \ep \in [-\ep_0, \ep_0] \setminus \{0\}.
\end{equation}
Recall from standard elliptic theory that for all $f \in H^2(\R^n)$ and all $\gamma > 0$,
\begin{equation} \label{std elliptic thry}
\begin{gathered}
\| f\|_{H^2(\R^n)} \le C( \| f\|_{L^2(\R^n)} +  \| \Delta f\|_{L^2(\R^n)}), \\
\| f\|^2_{H^1(\R^n)} \le C \| f\|_{L^2(\R^n)} \|f \|_{H^2(\R^n)} \le C( \gamma^{-1} \| f\|^2_{L^2(\R^n)} + \gamma \| \Delta f\|^2_{L^2(\R^n)}).
\end{gathered}
\end{equation}
Using these with \eqref{use std density} and $-h^2\Delta = P - V_L - V_S-iW_L$, for any $f \in L^2(\R^n)$,
\begin{equation*}
\begin{split}
\| \langle x \rangle^{-s}& P^{-1} \langle x \rangle^{-s}f\|_{H^2(\R^n)} \\
&\le C ( \| \langle x \rangle^{-s} P^{-1} \langle x \rangle^{-s}f\|_{L^2(\R^n)} + \| (-\Delta) \langle x \rangle^{-s} P^{-1} \langle x \rangle^{-s}f \|_{L^2(\R^n)}) \\
&\le C ( \| \langle x \rangle^{-s} P^{-1} \langle x \rangle^{-s}f\|_{H^1(\R^n)} + h^{-2} \| \langle x \rangle^{-s} (-h^2\Delta) P^{-1} \langle x \rangle^{-s}f \|_{L^2(\R^n)}) \\
&\le C(\gamma^{-1} + h^{-2}) \| \langle x \rangle^{-s} P^{-1} \langle x \rangle^{-s}f\|_{L^2(\R^n)} + C\gamma \| \Delta \langle x \rangle^{-s} P^{-1} \langle x \rangle^{-s}f\|_{L^2(\R^n)}\\
 &+ C  h^{-2} \| f \|_{L^2(\R^n)}. 
\end{split}
\end{equation*}
The same estimate holds with $(-h^2 \Delta + V_L + V_S - iW_L)^{-1}$ in place of $P^{-1}$. Selecting $\gamma$ sufficiently small depending on $C$, yields 
\begin{equation*}
\begin{split}
\| \langle x \rangle^{-s}&(-h^2 \Delta + V_L + V_S \pm i W_L)^{-1}  \langle x \rangle^{-s}f\|_{H^2(\R^n)} \\
&\le Ch^{-2} \| \langle x \rangle^{-s} (-h^2 \Delta + V_L + V_S \pm i W_L)^{-1}  \langle x \rangle^{-s}f\|_{L^2(\R^n)} + C  h^{-2}\| f \|_{L^2(\R^n)},
\end{split}
\end{equation*}
so in view of \eqref{use std density},
\begin{equation} \label{L2 to H2 lap}
\| \langle x \rangle^{-s}(-h^2 \Delta + V_L + V_S \pm i W_L)^{-1} \langle x \rangle^{-s}f\|_{H^2(\R^n)} \le e^{C/h}\|f \|_{L^2(\R^n)}.
\end{equation}

If $|\alpha_1| > 0$, let $f \in  C^{\infty}_0(\R^n)$, and put $u = \langle x \rangle^{-s} P^{-1} \langle x \rangle^{-s} \partial^{\alpha_1}_x f$. We need to show
\begin{equation} \label{alpha1 nontriv}
\| u\|_{H^{|\alpha_2|}} \le e^{C/h} \| f\|_{L^2}, \qquad H^0 = H^0(\R^n) \defeq L^2(\R^n).   
\end{equation} 
If $|\alpha_2| = 0$, we use \eqref{L2 to H2 lap} and that the adjoint of $P$ on $L^2(\R^n)$ is $-h^2 \Delta + V_L + V_s - iW_L$. Therefore
\begin{equation*}
\begin{split}
\| u\|^2_{L^2} &= \langle u, \langle x \rangle^{-s} P^{-1} \langle x \rangle^{-s} \partial^{\alpha_1}_x f \rangle_{L^2} \\
&\le \| \partial^{\alpha_1}_x \langle x \rangle^{-s} (P^*)^{-1} \langle x \rangle^{-s} u \|_{L^2} \| f\|_{L^2} \\
&\le  e^{C/h} \| u \|_{L^2} \| f\|_{L^2}.
\end{split}
\end{equation*}
If $|\alpha_2| = 1$, we recognize that $(-h^2 \Delta 
 +V_L + V_S + iW_L)u = Pu = \langle x \rangle^{-2s} \partial^{\alpha_1}_x f + [-h^2\Delta, \langle x \rangle^{-s}] \langle x \rangle^{s} u$. Then multiply by $\overline{u}$, integrate over $\R^n$, and integrate by parts as appropriate
\begin{equation*}
\begin{split}
 \|h\nabla u \|_{L^2}^2 = - \int (V_L + V_S) |u|^2 - \int \partial^{\alpha_1}_x (\langle x \rangle^{-2s} \overline{u}) f -h^2 \int \overline{u} [\Delta, \langle x \rangle^{-s}] \langle x \rangle^{s} u. 
\end{split}
\end{equation*}
Because $[\Delta, \langle x \rangle^{-s}] \langle x \rangle^{s} = (\Delta \langle x \rangle^{-s}) \langle x \rangle^{s}  + 2 (\nabla \langle x \rangle^{-s}) \cdot \nabla \langle x \rangle^{s}$ and $\|2 (\nabla \langle x \rangle^{-s}) \cdot \nabla \langle x \rangle^{s} u\|_{L^2} \le C \| \nabla u \|_{L^2}$, we conclude, for all $\gamma > 0$,
\begin{equation*}
\begin{split}
\|h \nabla u \|^2_{L^2} &\le C  ((1 + \gamma^{-1}) \| u\|^2_{L^2} + \| f\|^2_{L^2} ) + \gamma \| h\nabla u \|^2_{L^2} \\
&\le  e^{C/h} (1 + \gamma^{-1}) \| f\|^2_{L^2} + \gamma \|h \nabla u \|^2_{L^2}.
\end{split}
\end{equation*}
 Note that $\|u\|^2_{L^2} \le e^{C/h} \|f\|^2_{L^2}$ by \eqref{alpha1 nontriv} in the case $|\alpha_2| = 0$, which we have already shown. Fixing $\gamma$ small enough, we absorb the second term on the right side into the left side, and divide by $h^2$, confirming \eqref{alpha1 nontriv} when $|\alpha_2| = |\alpha_1| = 1$.\\
\end{proof}

\subsection{H\"older continuity of the resolvent away from zero frequency} \label{Holder cont section}

\begin{lemma} \label{resolv square lemma}
Let $s > 3/2$ and $\lambda_0 > 0$. Assume $c$ and $V$ satisfy the same conditions as in the statement of Lemma \ref{lap lemma}. Fix $\ep_0 > 0$ sufficiently small so that $1 - (\sup_{\R^n} c^{-2}) \lambda^{-2}_0 \ep^2_0 > 0$. There exists $C > 0$ so that  if $|\real \lambda| \ge \lambda_0$ and $\imag \lambda \in (0, \ep_0]$,
\begin{equation} \label{lap square}
\|  \langle x \rangle^{-s}  (G - \lambda^2)^{-2}  \langle x \rangle^{-s} \|_{L^2(\R^n) \to H^1(\R^n)} \le e^{C |\real \lambda|}.
\end{equation}
\end{lemma}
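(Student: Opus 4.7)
The strategy is to decompose the squared resolvent as a product of two weighted single-resolvent factors, each controlled by Lemma~\ref{lap lemma}. The hypothesis $s > 3/2$ affords the room to choose an intermediate weight $s_1 \in (1/2, s-1/2)$ satisfying both $s_1 > 1/2$ and $s - s_1 > 1/2$.

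First apply the semiclassical rescaling \eqref{resolv of G alt form} to write
\[
(G-\lambda^2)^{-2} = h^{4}\, P^{-1} c^{-2} P^{-1} c^{-2}, \qquad h = |\real\lambda|^{-1},
\]
then insert the identity $\langle x\rangle^{-s_1}\langle x\rangle^{s_1}=I$ between the two copies of $P^{-1}c^{-2}$:
\[
\langle x\rangle^{-s}(G-\lambda^2)^{-2}\langle x\rangle^{-s} = h^{4}\bigl[\langle x\rangle^{-s} P^{-1} c^{-2} \langle x\rangle^{-s_1}\bigr]\bigl[\langle x\rangle^{s_1} P^{-1} c^{-2}\langle x\rangle^{-s}\bigr].
\]
The first factor maps $L^{2} \to H^{1}$ with norm at most $e^{C/h}$ by the $|\alpha_2|\le 1$ cases of Lemma~\ref{lap lemma}, together with weight monotonicity (reducing to common weight $\langle x\rangle^{-s_1}$, valid since $s_1 > 1/2$).

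The second factor $\langle x\rangle^{s_1} P^{-1} c^{-2}\langle x\rangle^{-s}$ carries a growing weight $\langle x\rangle^{s_1}$ on the left and is the technical heart of the argument. Passing to the $L^{2}$-adjoint and using the relation $((G-\lambda^2)^{-1})^{*} = c^{-2}(G-\overline{\lambda^2})^{-1}c^{2}$ (a consequence of the self-adjointness of $G$ on $L^{2}_{c}$), its $L^{2}\to L^{2}$ norm coincides with that of $\langle x\rangle^{-s}(P^*)^{-1}\langle x\rangle^{s_1}$ modulo bounded multiplications by $c^{\pm 2}$. The analogue of Lemma~\ref{lap lemma} holds for $(P^*)^{-1}$ because the Carleman estimate \eqref{prepare to use density} is symmetric in the sign of the absorptive term $W_L$. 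To absorb the growing weight, one exploits the margin $s - s_1 > 1/2$: combining the multiplicative identity $\langle x\rangle^{s_1} = \langle x\rangle^{s}\cdot\langle x\rangle^{-(s-s_1)}$ with a duality pairing against $L^{2}$ test functions allows one to reduce the remaining estimate to the form $\|\langle x\rangle^{-s}(P^*)^{-1}\langle x\rangle^{-(s-s_1)}\|_{L^{2}\to L^{2}}$, which is again controlled by Lemma~\ref{lap lemma} since now both weights are decaying and each exceeds $1/2$.

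The main obstacle is the handling of the growing-weight factor; its resolution hinges on the margin $s > 3/2$ (which ensures $s - s_1 > 1/2$) and on the symmetry of Lemma~\ref{lap lemma} under $L^{2}$-adjoint. The remaining bookkeeping (multiplying the two factor bounds and absorbing $h^{4}\le \lambda_0^{-4}$) yields the stated estimate $\|\langle x\rangle^{-s}(G-\lambda^2)^{-2}\langle x\rangle^{-s}\|_{L^{2}\to H^{1}}\le e^{C'|\real\lambda|}$.
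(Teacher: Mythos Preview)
Your splitting strategy has a genuine gap: the factor $\langle x\rangle^{s_1} P^{-1} c^{-2}\langle x\rangle^{-s}$ with $s_1>0$ is \emph{not} bounded $L^2\to L^2$ uniformly as $\imag\lambda\to 0^+$, no matter how large $s$ is. Already for the free resolvent in dimension $n\ge 2$, the outgoing kernel on the real axis satisfies $|R_0(\lambda)(x,y)|\sim |x-y|^{-(n-1)/2}$ at large separation, so $\langle x\rangle^{s_1}R_0(\lambda)\langle x\rangle^{-s}$ fails to be $L^2$-bounded for any $s_1>-1/2$. Neither the adjoint trick nor the rewriting $\langle x\rangle^{s_1}=\langle x\rangle^{s}\langle x\rangle^{-(s-s_1)}$ removes the growing weight: the adjoint simply moves the growth to the other side, and the multiplicative identity leaves the growing factor $\langle x\rangle^{s}$ intact. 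In particular, your claimed reduction to $\|\langle x\rangle^{-s}(P^*)^{-1}\langle x\rangle^{-(s-s_1)}\|_{L^2\to L^2}$ is not a reduction at all---the growing weight has not disappeared, only been relabeled.

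This is precisely the obstruction that makes \eqref{lap square} nontrivial: one cannot obtain it by chaining two copies of the limiting absorption principle. The paper avoids this by using the commutator identity $[\Delta,r\partial_r]=2\Delta$ (following Cardoso--Vodev) to rewrite $-2\lambda^2\langle x\rangle^{-s}(G-\lambda^2)^{-2}\langle x\rangle^{-s}$ as a sum of terms each involving only a \emph{single} weighted resolvent, with the extra derivative $r\partial_r$ absorbed either by the $H^{-1}\to H^1$ bound \eqref{lap G} or by the decay of $V_c$ and $V$. The role of $s>3/2$ there is to compensate the factor of $r$ coming from $r\partial_r$, not to create room for an intermediate weight.
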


The reason to show \eqref{lap square} is that it implies Lipschitz continuity of the weighted resolvent on bounded subsets of $[\lambda_0, \infty)$ or $(-\infty, -\lambda_0]$, allowing us to obtain a continuous extension of the weighted resolvent.

\begin{corollary} \label{cont ext cor}
    Under the hypotheses Lemma \ref{resolv square lemma}, the map
     \[ \lambda \to \langle x \rangle^{-s}(G-\lambda^2)^{-1}\langle x \rangle^{-s}\]
     extends continuously in the space of bounded operators from $\imag \lambda > 0$ to $(-\infty,-\lambda_0] \cup [\lambda_0,\infty).$
\end{corollary}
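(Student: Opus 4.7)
The plan is to use Lemma \ref{resolv square lemma} to obtain Lipschitz continuity of the weighted resolvent on appropriate subsets of the closed upper half plane, and then extract a continuous extension to the real axis via completeness of the Banach space of bounded operators $L^2(\R^n) \to H^1(\R^n)$.

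First I would compute, for $\lambda$ in the resolvent set of $G$,
\[ \frac{d}{d\lambda}\Big(\langle x \rangle^{-s}(G - \lambda^2)^{-1}\langle x \rangle^{-s}\Big) = 2\lambda\, \langle x \rangle^{-s}(G-\lambda^2)^{-2}\langle x \rangle^{-s}. \]
Lemma \ref{resolv square lemma} then bounds the right-hand side in operator norm $L^2 \to H^1$ by $2|\lambda|e^{C|\real \lambda|}$, uniformly for $\imag \lambda \in (0, \ep_0]$ and $|\real \lambda| \ge \lambda_0$.

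Next, given a real boundary point $\lambda_\infty$ with $|\lambda_\infty| \ge \lambda_0$, I would re-apply Lemma \ref{resolv square lemma} with some $\lambda_0' < \lambda_0$ in place of $\lambda_0$ (since the lemma holds for any positive threshold), so that $\lambda_\infty$ lies strictly interior to the applicability region. For $\lambda_1, \lambda_2$ in a small neighborhood of $\lambda_\infty$ with positive imaginary parts, the straight line segment joining them has $\imag \lambda > 0$ throughout (by convexity) and $|\real \lambda| \ge \lambda_0'$, so integrating the derivative estimate along the segment yields
\[ \|\langle x \rangle^{-s}\bigl[(G-\lambda_1^2)^{-1} - (G-\lambda_2^2)^{-1}\bigr]\langle x \rangle^{-s}\|_{L^2(\R^n) \to H^1(\R^n)} \le C(\lambda_\infty)\, |\lambda_1 - \lambda_2|, \]
a uniform Lipschitz bound in a neighborhood of $\lambda_\infty$.

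The extension then follows by a standard Cauchy argument: for any sequence $\{\lambda_n\}$ in the upper half plane converging to $\lambda_\infty$, the family $\{\langle x \rangle^{-s}(G-\lambda_n^2)^{-1}\langle x \rangle^{-s}\}$ is Cauchy in operator norm by the Lipschitz estimate and hence converges in the complete Banach space of bounded operators $L^2(\R^n) \to H^1(\R^n)$. Interlacing two such sequences shows the limit depends only on $\lambda_\infty$, and passing to the limit in the Lipschitz estimate gives continuity of the extended map on $\{\imag \lambda > 0\} \cup \bigl((-\infty,-\lambda_0]\cup[\lambda_0,\infty)\bigr)$. There is no serious obstacle once Lemma \ref{resolv square lemma} is in hand; the only subtlety is the minor technical point of shrinking the threshold so that segments near $\pm \lambda_0$ remain within the region where the square-resolvent bound applies.
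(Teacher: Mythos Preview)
Your proposal is correct and follows essentially the same approach as the paper: compute the $\lambda$-derivative of the weighted resolvent, bound it via Lemma \ref{resolv square lemma}, integrate along straight-line segments in the upper half plane to obtain a uniform Lipschitz bound, and conclude the continuous extension by completeness. The paper's proof is slightly terser and does not bother to shrink $\lambda_0$ to $\lambda_0'$ (it simply works on the strip $\lambda_0 \le |\real \lambda| \le A$, $0 < \imag \lambda < \ep_0$, which is already convex), but your extra precaution is harmless.
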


\begin{proof}
    For $j =1,2$ suppose that $\lambda_0 \le |\real \lambda_j| \le A$ for some $A \ge 1$, and $0 < \imag \lambda_j < \ep_0$. Let $\Gamma$ be the straight-line contour connecting $\lambda_1$ and $\lambda_2$. We use Lemma \ref{resolv square lemma} and the fundamental theorem of calculus for line integrals to calculate
    \begin{equation} \label{Holder cont high freq}
\begin{split}
\| \langle x \rangle^{-s} & ((G- \lambda^2_2)^{-1} - (G - \lambda^2_1)^{-1})\langle x \rangle^{-s} \|_{L^2 \to H^1}  \\
&= \big\| \langle x \rangle^{-s} \int_{\Gamma} \frac{d}{d\lambda} (G - \lambda^2)^{-1} d\lambda \langle x \rangle^{-s} \big\|_{L^2 \to H^1}  \\
&= 2 \big\|\int_{\Gamma} \langle x \rangle^{-s}  \lambda (G - \lambda^2)^{-2} d\lambda \langle x \rangle^{-s} \big\|_{L^2 \to H^1}  \\
&\le |\Gamma| e^{CA} =  |\lambda_2 - \lambda_1| e^{CA}.
\end{split}
\end{equation}
Thus $\langle x \rangle^{-s} (G-\lambda^2)^{-1}\langle x \rangle^{-s}$ is Lipschitz continuous on bounded subsets of $[\lambda_0, \infty)$ or $(-\infty, -\lambda_0]$ with values in the space of bounded operators $L^2(\R^n) \to H^1(\R^n)$. It therefore extends continuously to $(-\infty, -\lambda_0] \cup [\lambda_0,\infty)$.\\
\end{proof}

\begin{remark} \label{reduce three halves}
    If \eqref{Holder cont high freq} has been shown for $s > 3/2$, it holds for all $s > 1/2$ too, with possibly a smaller H\"older exponent. See \cite[Section 3]{cavo04}.
\end{remark}

\begin{proof}[Proof of Lemma \ref{resolv square lemma}]

The proof is motivated by \cite[Proof of Proposition 2.1]{cavo04}. We begin with the resolvent identity

\begin{equation} \label{begin to look at resolv square}
\begin{split}
-2 \lambda^2 \langle x \rangle^{-s} (G - \lambda^2)^{-2} \langle x \rangle^{-s}  &= 2\langle x \rangle^{-s} (G - \lambda^2)^{-1} (G - \lambda^2 + c^2 \Delta - V) (G- \lambda^2)^{-1} \langle x \rangle^{-s} \\
&= 2 \langle x \rangle^{-s} (G - \lambda^2)^{-1} \langle x \rangle^{-s} - 2\langle x \rangle^{-s} (G - \lambda^2)^{-1}  V (G - \lambda^2)^{-1} \langle x \rangle^{-s} \\
& + 2 \langle x \rangle^{-s} (G - \lambda^2)^{-1} c^2 \Delta (G - \lambda^2)^{-1} \langle x \rangle^{-s}.
\end{split}
\end{equation}
By \eqref{srp V decay} and \eqref{lap G}, the norm $L^2(\R^n) \to H^1(\R^n)$ of the second line of \eqref{begin to look at resolv square} is bounded by $e^{C|\real \lambda|}$. 

Now we examine more carefully the last line of \eqref{begin to look at resolv square}. Recall the well known formula for the Laplacian in polar coordinates,
\begin{equation*}
\Delta = \partial^2_r + (n-1)r^{-1} \partial_r + r^{-2} \Delta_{\US^{n-1}}, 
\end{equation*}
where $\Delta_{\US^{n-1}}$ is the negative Laplace Beltrami operator on $\US^{n-1}$. This implies the commutator identity
\begin{equation} \label{commutator identity}
[\Delta, r \partial_r] \defeq \Delta(r \partial_r) - r \partial_r (\Delta) = 2 \Delta. 
\end{equation}
Fix $f \in C^\infty_0(\R^n)$. Set $u \defeq (G - \lambda^2)^{-1} \langle x \rangle^{-s} f \in H^2(\R^n)$ and $V_c \defeq 1 - c^{-2}$. Let $\{u_k\}_{k=1}^\infty \subseteq C^\infty_0(\R^n)$ be a sequence converging to $u$ in $H^2(\R^n)$. Using \eqref{commutator identity},
\begin{equation} \label{approx u}
\begin{split}
2 \langle x \rangle^{-s} (G - \lambda^2)^{-1} c^2 \Delta (G - \lambda^2)^{-1} \langle x \rangle^{-s} f &= \lim_{k \to \infty} 2 \langle x \rangle^{-s} (G - \lambda^2)^{-1} c^2 \Delta u_k \\
& = \lim_{k \to \infty} \langle x \rangle^{-s} (G - \lambda^2)^{-1} c^2 [\Delta, r \partial_r] u_k,
\end{split}
\end{equation}
with convergence taken in the sense of $L^2(\R^n)$. As members of $H^{-1}(\R^n)$,
\begin{equation} \label{unpack commutator}
\begin{split}
[\Delta, r \partial_r] u_k
&=  \big( - \Delta(-r \partial_r) + r \partial_r (-\Delta) \big) u_k \\
&= \big( (- \Delta + \lambda^2 V_c + c^{-2}V - \lambda^2)(-r\partial_r) + r\partial_r (- \Delta + \lambda^2 V_c + c^{-2}V - \lambda^2) \\
& + (\lambda^2 V_c + c^{-2}V)  r \partial_r - r\partial_r (\lambda^2 V_c +  c^{-2}V) \big)u_k. 
\end{split} 
\end{equation}
Since $(- \Delta + \lambda^2 V_c + c^{-2} V - \lambda^2)^{-1} = (G - \lambda^2)^{-1} c^2$, from \eqref{approx u} and \eqref{unpack commutator} it follows that
\begin{equation} \label{more progress on resolv square}
\begin{split}
2 \langle x \rangle^{-s} (&G - \lambda^2)^{-1} c^2 \Delta (G - \lambda^2)^{-1} \langle x \rangle^{-s} f \\
 &= \langle x \rangle^{-s} (-r \partial_r) (G - \lambda^2)^{-1} \langle x \rangle^{-s} f \\
&+ \langle x \rangle^{-s} (G - \lambda^2)^{-1} c^2 ((\lambda^2V_c + c^{-2} V) r \partial_r - r\partial_r (\lambda^2 V_c + c^{-2} V)) (G - \lambda^2)^{-1} \langle x \rangle^{-s} f \\
&+\lim_{k \to \infty} \langle x \rangle^{-s} (G - \lambda^2)^{-1} c^2 r \partial_r c^{-2}  (G - \lambda^2)  u_k.
\end{split} 
\end{equation}
Our conditions on $c$ and $V$ imply $\lambda^2 V_c + c^{-2} V = O(\lambda^2 \langle r \rangle^{-\delta})$ for some $\delta > 2$. Thus, by $s > 3/2$ and \eqref{lap G}, we conclude that the operator norm $L^2(\R^n) \to H^1(\R^n)$ of both the second and third lines of \eqref{more progress on resolv square} is bounded by $e^{C|\real \lambda |}$.

It remains to control the last line of \eqref{more progress on resolv square}. In fact, we will show 
\begin{equation} \label{lim k}
\lim_{k \to \infty} \langle x \rangle^{-s} (G - \lambda^2)^{-1} c^2 r \partial_r c^{-2}  (G - \lambda^2)  u_k = \langle x \rangle^{-s} (G - \lambda^2)^{-1} c^2 r \partial_r \langle x \rangle^{-s} c^{-2} f.
\end{equation}
By $s > 3/2$ and \eqref{lap G}, the operator on the right side has norm $L^2(\R^n) \to H^1(\R^n)$ bounded by $Ce^{C|\real \lambda|}$, completing the proof of \eqref{lap square}. 

 To work toward \eqref{lim k}, fix $k \in \NN$, and let $\{ w_j \}_{j =1}^\infty \subseteq C^\infty_0(\R^n)$ be a sequence converging to $c^{-2} (G - \lambda^2)u_k$ in $L^2(\R^n)$, such that $u_k$ and the $w_j$ have support in a fixed compact subset of $\R^n$. Then $r\partial_r w_j$ converges to $r \partial_r c^{-2}(G - \lambda^2)u_k$ in $H^{-1}(\R^n)$. Thus for any $g \in L^2(\R^n)$,

 \begin{equation} \label{lim j}
 \langle g, \langle x \rangle^{-s} (G - \lambda^2)^{-1} c^2 r \partial_r c^{-2}  (G - \lambda^2)  u_k  \rangle_{L^2} = \lim_{j \to \infty}    \langle (G - \overline{\lambda}^2)^{-1} c^2 \langle x \rangle^{-s} g,  r \partial_r w_j \rangle_{L^2}.
\end{equation}
Furthermore, it holds that $r  (G - \overline{\lambda}^2)^{-1} c^2 \langle x \rangle^{-s} g \in H^1(\R^n)$. To verify this membership, it suffices to show $w \defeq \langle x \rangle (G - \overline{\lambda}^2)^{-1} c^2 \langle x \rangle^{-s} g$ belongs to $H^1(\R^n)$, since $r (G - \overline{\lambda}^2)^{-1} c^2  \langle x \rangle^{-s} g= r \langle x \rangle^{-1} w$. In turn, we have 
\begin{equation*}
\begin{gathered}
(G - \overline{\lambda}^2)  w = [G , \langle x \rangle] (G - \overline{\lambda}^2)^{-1}  \langle x \rangle^{-s}c^2 g + \langle x \rangle^{1- s} c^2 g \in L^2(\R^n), 
\end{gathered}
\end{equation*}
whence $w \in H^2(\R^n)$ by Lemma \ref{apply cutoff lem}. Continuing then from \eqref{lim j},

\begin{equation} \label{lim j 2}
\begin{split}
 \langle g,& \langle x \rangle^{-s} (G - \lambda^2)^{-1} c^2 r \partial_r c^{-2}  (G - \lambda^2)  u_k  \rangle_{L^2}\\
 &=  \lim_{j \to \infty} \langle  (\partial_r)^* r  (G - \overline{\lambda}^2)^{-1} c^2 \langle x \rangle^{-s} g,  w_j \rangle_{L^2}\\
 &= \langle  (\partial_r)^* r  (G - \overline{\lambda}^2)^{-1} c^2 \langle x \rangle^{-s} g,  c^{-2} (G - \lambda^2) u_k \rangle_{L^2}.
\end{split}
\end{equation}
Here, the adjoint of $\partial_r$ acts on $v \in C^\infty_0(\R^n)$ by $(\partial_r)^* v = (1 - n)r^{-1} v - \partial_r v$ and extends boundedly to $H^1(\R^n)$, see Lemma \ref{hardy lemma}.

We now wish to send $k \to \infty$ in \eqref{lim j 2} and conclude
\begin{equation} \label{lim k 2}
\lim_{k \to \infty} \langle g, \langle x \rangle^{-s} (G - \lambda^2)^{-1} c^2 r \partial_r c^{-2}  (G - \lambda^2)  u_k  \rangle_{L^2} = \langle  (\partial_r)^* r  (G - \overline{\lambda}^2)^{-1} c^2 \langle x \rangle^{-s} g,  c^{-2} \langle x \rangle^{-s} f \rangle_{L^2.}
\end{equation}
Since $(G -\lambda^2)u_k$ converges to $\langle x \rangle^{-s}f $ in $L^2(\R^n)$, we have \eqref{lim k 2} so long as 

Consider another sequence $\{ v_\ell\}_{\ell = 1}^\infty \subseteq C^\infty_0(\R^n)$ converging to $c^{-2} f$ in $L^2(\R^n)$. We use $s > 3/2$, \eqref{lap G}, and that $r \partial_r \langle x \rangle^{-s} v_\ell$ converges to $r \partial_r \langle x \rangle^{-s}c^{-2} f$ in $H^{-1}(\R^n)$:
\begin{equation*}
\begin{split}
\langle & (\partial_r)^* r  (G - \overline{\lambda}^2)^{-1} c^2 \langle x \rangle^{-s} g,  c^{-2}  \langle x \rangle^{-s} f \rangle_{L^2} \\
&= \lim_{\ell \to \infty} \langle (\partial_r)^* r  (G - \overline{\lambda}^2)^{-1} c^2 \langle x \rangle^{-s} g,   \langle x \rangle^{-s} v_\ell \rangle_{L^2} \\
&=  \lim_{\ell \to \infty} \langle  (G - \overline{\lambda}^2)^{-1} c^2 \langle x \rangle^{-s} g,  r \partial_r \langle x \rangle^{-s} v_\ell \rangle_{L^2} \\
&= \lim_{\ell \to \infty} \langle g , \langle x \rangle^{-s} (G - \lambda^2)^{-1} c^2 r \partial_r \langle x \rangle^{-s} v_\ell \rangle_{L^2}\\
&= \langle g , \langle x \rangle^{-s} (G - \lambda^2)^{-1} c^2 r \partial_r \langle x \rangle^{-s} c^{-2} f \rangle_{L^2}.
\end{split}
\end{equation*}
This completes the proof of \eqref{lim k} and of \eqref{lap square}.\\
\end{proof}

\section{Proof of Theorem \ref{LED thm}} \label{proof LED thm section}

In this Section, we prove Theorem \ref{LED thm}. The argument is motivated by \cite[Section 2]{cavo04}. The idea is to rewrite the wave propagators using the spectral theorem and Stone's formula. We aim to pick up time decay by integrating by parts within Stone's formula. To allow for this we first smooth out the resolvent by convolving it with an approximating identity depending on a small parameter $\ep = \ep(t)$, which tends to zero as $t \to \infty$. The H\"older regularity of the resolvent ensures that the reminder incurred from this step decays in time too.  

\begin{proof}[Proof of Theorem \ref{LED thm}]
 We give a proof of \eqref{LED sine propagator low freq}, and then conclude by pointing out the minor modifications needed to establish \eqref{LED cosine propagator low freq}.
 
We adopt the notation
\begin{equation*}
 R_s(\lambda) \defeq \langle x \rangle^{-s} (G - \lambda^2)^{-1} \langle x \rangle^{-s}. 
\end{equation*}
Let $A = A(t) = \gamma \log(t)$, for $\gamma > 0$ to be chosen in due course.  

The path to \eqref{LED sine propagator low freq} starts from Stone's formula \cite[Section 4.1]{te14}. For $f \in L^2(\R^n)$, 
\begin{equation} \label{ep to zero in Stone}
\begin{split}
\langle&x \rangle^{-s} \mathbf{1}_{[0, A^2]} G^{m - \frac{1}{2}}  \sin(tG^{1/2}) \langle x \rangle^{-s}f \\
&= \frac{1}{2\pi i} \lim_{\e \to 0^+}  \langle x \rangle^{-s}   \int^{A^2}_{0} \tau^{m-\frac{1}{2}}   \sin(t \tau^{1/2}) ((G - \tau - i\e)^{-1} - (G-\tau +i\e)^{-1}) d\tau  \langle x \rangle^{-s} f  \\
&= \frac{1}{\pi i}  \lim_{\e \to 0^+} \int^{A}_0 \lambda^{2m} \sin(t \lambda)( \langle x \rangle^{-s} (G - \lambda^2 - i\e)^{-1} \langle x \rangle^{-s}  -  \langle x \rangle^{-s} (G-\lambda^2 +i\e)^{-1}  \langle x \rangle^{-s} )  f d\lambda \\
&= \frac{1}{\pi i} \int^{A}_0 \lambda^{2m} \sin(t \lambda)( R_s(\lambda) - R_s( -\lambda)))  f d\lambda \\
&= \frac{1}{\pi i} \int^{A}_{\frac{\kappa}{2}} \lambda^{2m}  \sin(t \lambda)( (R_s(\lambda)  - R_s(-\lambda))  f d\lambda \\
&+ \frac{1}{\pi i} \int^{\frac{\kappa}{2}}_0 \lambda^{2m} \sin(t \lambda)( A_s(\lambda)  - A_s(-\lambda))  f d\lambda \\
&- \frac{1}{\pi i} \int^{\frac{\kappa}{2}}_0 \lambda^{2m} \sin(t \lambda)( \tfrac{1}{2 \pi}  \langle x \rangle^{-s} ( \log ( \tfrac{-i\lambda|x-y|}{2})  -  \log ( \tfrac{i\lambda|x-y|}{2}) )\langle y \rangle^{-s} c^{-2} \mathbf{1}_{\{2\}}(n) f d\lambda.
\end{split}
\end{equation}
Here, $\kappa$ and $B_s(\lambda)$ are as in the statement of Lemma \ref{low freq exp lem}. Between lines three and four, we can use the dominated convergence theorem, permitting us to set $\e = 0$, because, by \eqref{low freq exp}, $R_s(\lambda)$ has at worst a logarithmic singularity as $|\lambda| \to 0$. Thus we need to demonstrate decay of
\begin{equation} \label{decompose J1}
\int^{A}_{\frac{\kappa}{2}} \sin(t \lambda) F(\lambda) f d\lambda + \int^{\frac{\kappa}{2}}_0 \sin(t \lambda) F_0(\lambda)  f d\lambda  +\frac{1}{2\pi} \int^{\frac{\kappa}{2}}_0 \lambda^{2m} \sin(t \lambda) d\lambda \langle x \rangle^{-s} \langle y \rangle^{-s} c^{-2}  \mathbf{1}_{\{2\}}(n)f ,
\end{equation}
where 
\begin{equation*}
\begin{gathered}
F(\lambda) \defeq \frac{\lambda^{2m}}{\pi i} (R_s(\lambda) - R_s(-\lambda)), \\
\begin{split}
F_0(\lambda) &\defeq \frac{\lambda^{2m}}{\pi i}(( A_s(\lambda)  - A_s(-\lambda)).
\end{split}
\end{gathered}
\end{equation*}
In Appendix \ref{inequalities appendix}, we give a simple estimate utilizing integration by parts to show that for some $\nu > 0$,
\begin{equation} \label{J2 est}
\int^{\frac{\kappa}{2}}_0  \lambda^{2m}   \sin(t \lambda) d\lambda = O(t^{-\nu}).
\end{equation}
For the first and second terms of \eqref{decompose J1}, we have, by Lemma \ref{low freq exp lem} and Corollary \ref{cont ext cor}, $C > 0$ and $0 < \mu \le 1$ so that 
\begin{gather}
\| F_0(\lambda_2) - F_0(\lambda_1) \|_{L^2 \to H^1} \le C |\lambda_2 - \lambda_1|^\mu, \qquad 0 \le \lambda_1, \, \lambda_2 \le \kappa, \label{F0 Lipschitz} \\
\| F(\lambda_2) - F(\lambda_1) \|_{L^2 \to H^1} \le e^{C A} |\lambda_2 - \lambda_1|^{\mu}, \qquad \kappa/4   \le  \lambda_1, \, \lambda_2 \le 2A. \label{F Holder} 
\end{gather}

To utilize \eqref{F0 Lipschitz} and \eqref{F Holder}, let $\varphi \in C^\infty_0((-1, 1); [0,1])$ with $\int \varphi = 1$. Then, for $0 < \ep = \ep(t) \ll \kappa$, 
\begin{equation*}
\begin{gathered}
F_{0, \,\ep}(\lambda) \defeq \ep^{-1} \int_\R F_0(\lambda - \sigma) \varphi(\sigma/\ep) d\sigma,  \\
F_\ep(\lambda) \defeq \ep^{-1} \int_\R F(\lambda - \sigma) \varphi(\sigma/\ep) d\sigma,
\end{gathered}
\end{equation*}
are smooth in $(0, \infty)_\lambda$ with values varying in the space of bounded operators $L^2(\R^n) \to H^1(\R^n)$. In view of \eqref{F0 Lipschitz} and \eqref{F Holder}, 
\begin{gather}
\| F_{0, \,\ep}(\lambda) - F_0(\lambda)\|_{L^2 \to H^1}  = O(\ep^\mu), \qquad 0 \le \lambda \le \kappa, \\
\| F_\ep (\lambda) - F(\lambda)\|_{L^2 \to H^1} = O(e^{CA}\ep^\mu), \qquad \kappa \le \lambda \le A.
\end{gather}
Consequently, by adding and subtracting terms,
\begin{equation*}
\begin{split}
\big\|&\int^{A}_{\frac{\kappa}{2}} \sin(t \lambda) F(\lambda) f d\lambda + \int^{\frac{\kappa}{2}}_0 \sin(t \lambda) F_0(\lambda)  f d\lambda \big\|_{L^2 \to H^1}\\
& \le \big\|\int_0^{\frac{\kappa}{2}} \sin(t\lambda) F_{0}(\lambda) d
\lambda - \int_0^{\frac{\kappa}{2}} \sin(t\lambda) F_{0, \, \ep}(\lambda) d
\lambda \big\|_{L^2 \to H^1} \\
&+\big\|\int_{\frac{\kappa}{2}}^{A} \sin(t\lambda) F(\lambda) d
\lambda - \int_{\frac{\kappa}{2}}^{A} \sin(t\lambda) F_{ \ep}(\lambda) d
\lambda \big\|_{L^2 \to H^1} \\
& +\big\|\int_{\frac{\kappa}{2}}^A \sin(t\lambda) F_{\ep}(\lambda) d
\lambda \big\|_{L^2 \to H^1}+ \big\|\int_{0}^{\kappa} \sin(t\lambda) F_{0, \, \ep}(\lambda) d
\lambda \big\|_{L^2 \to H^1}\\
&= O(e^{CA} \ep^\mu) +\big \|\int_{\frac{\kappa}{2}}^A \sin(t\lambda) F_{\ep}(\lambda) d
\lambda \big\|_{L^2 \to H^1} + \big\|\int_{0}^{\frac{\kappa}{2}} \sin(t\lambda) F_{0, \, \ep}(\lambda) d
\lambda \big\|_{L^2 \to H^1}.
\end{split}
\end{equation*}
Integrating by parts,
\begin{equation*}
\begin{gathered}
t\int_{\frac{\kappa}{2}}^A \sin(t \lambda) F_\ep(\lambda) d\lambda = \left[-\cos(t\lambda) F_\ep(\lambda) \right]^{A}_{\frac{\kappa}{2}}  + \int_{\frac{\kappa}{2}}^A \cos(t \lambda) \frac{dF_\ep(\lambda)}{d \lambda}d\lambda, \\
t\int_0^{\frac{\kappa}{2}} \sin(t \lambda) F_{0,\ep} (\lambda) d\lambda =  \left[-\cos(t\lambda) F_{0,\ep}(\lambda) \right]_{0}^{\frac{\kappa}{2}}   + \int_0^{\tfrac{\kappa}{2}} \cos(t \lambda) \frac{dF_\ep(\lambda)}{d \lambda}d\lambda,
\end{gathered}
\end{equation*}
and invoking
\begin{equation*}
\big \|  \partial^k_\lambda F_{0, \, \ep}(\lambda)  \|_{L^2 \to H^1},  \| \partial^k_\lambda F_\ep(\lambda)  \|_{L^2 \to H^1} = O( e^{CA} \ep^{-k}), \qquad k \in \{0, 1\},
\end{equation*}
which follows from the definitions of $F_{0, \ep}$ and $F_\ep$, we conclude
\begin{equation*} 
\big\|\int^{A}_{\frac{\kappa}{2}} \sin(t \lambda) F(\lambda) f d\lambda + \int^{\frac{\kappa}{2}}_0 \sin(t \lambda) F_0(\lambda)  f d\lambda \big\|_{L^2 \to H^1} = O(e^{CA} ( \ep^\mu + t^{-1} \ep^{-1})).
\end{equation*}
 Finally, take $\ep(t) = t^{-1/2}$, $t \gg 1$. Since $A(t) =  \gamma \log t$ for $ \gamma > 0$ to be chosen, 
 \begin{equation} \label{J1 est final step}
\big\|\int^{A}_{\frac{\kappa}{2}} \sin(t \lambda) F(\lambda) f d\lambda + \int^{\frac{\kappa}{2}}_0 \sin(t \lambda) F_0(\lambda)  f d\lambda \big\|_{L^2 \to H^1} = O(t^{C\gamma}  ( t^{-\mu/2} + t^{-1/2})).
 \end{equation}
 Thus, fixing $\gamma$ sufficiently small, \eqref{LED sine propagator low freq} follows in view of \eqref{J2 est} and \eqref{J1 est final step}.
 
 The proof of \eqref{LED cosine propagator low freq} follows the same steps. The only difference is that an extra factor of $\lambda$ appears after the change of variable between lines two and three of \eqref{ep to zero in Stone}. The integrands in \eqref{decompose J1}. But this does not hinder reaching an $O_{L^2 \to H^1}(t^{-\nu})$ bound as in \eqref{J1 est final step}.  \\
\end{proof}

\section{Proof of Theorem \ref{brief decay}}
\label{proof of prelim decay appendix}

In this section we prove Theorem \ref{brief decay} using Theorem \ref{LED thm}, and along the way establish statements of decay that depend on the amount of regularity the initial conditions possess with respect to $G$. 

\begin{proof}[Proof of Theorem \ref{brief decay}]

Initially, take $s > 1$. For $\eta > 0$, let $D(G^{\frac{1}{2} + \eta})$ and $D(G^{\eta})$ denote the domains of the operators $G^{\frac{1}{2} + \eta}$ and $G^\eta$, respectively. Suppose 
\begin{equation} \label{technical hypotheses}
u_0 \in D(G^{\frac{1}{2} + \eta}), \quad u_1\in D(G^{\eta}), \quad \langle x \rangle^{s} u_0 , \langle x \rangle^{s}u_1 \in L^2(\R^n).
\end{equation}
Let $u(\cdot, t)$ be the solution to \eqref{wave eqn} given by the spectral theorem: 
\begin{equation*}
u(\cdot, t) = \cos(t G^{\frac{1}{2}}) u_0 + \frac{\sin(t G^{\frac{1}{2}})}{G^{\frac{1}{2}}} u_1.
\end{equation*}
Multiply $u(\cdot, t)$ and $\partial_t u(\cdot, t)$ by $\langle x \rangle^{-s}$ and decompose as follows 
\begin{equation*}
\begin{split}
u(\cdot, t)&=  (\cos(t G^{\frac{1}{2}}) u_0 +  \tfrac{\sin(t G^{\frac{1}{2}})}{G^{\frac{1}{2}}} u_1) \\
&=  \mathbf{1}_{[0, A^2(t)]} (\cos(t G^{\frac{1}{2}})  \langle x \rangle^{-s} (\langle x \rangle^{s} u_0) + \tfrac{\sin(t G^{\frac{1}{2}})}{G^{\frac{1}{2}}} \langle x \rangle^{-s} (\langle x \rangle^{s} u_1)) \\
&+  \mathbf{1}_{(A^2(t), \infty)}(G)(\tfrac{\cos(t G^{\frac{1}{2}})}{G^{\frac{1}{2} + \eta}} (G^{\frac{1}{2} + \eta}u_0)+ \tfrac{\sin(t G^{\frac{1}{2}})}{G^{\frac{1}{2} + \eta}}  (G^\eta u_1)) \\
&\qefed  u_{\le A^2(t)} + u_{> A^2(t)}. 
\end{split}
\end{equation*}
Similarly,  
\begin{equation} \label{long dtu calc}
\begin{split}
 \partial_t u(\cdot,t) & =  (-G^{\frac{1}{2}} \sin(t G^{\frac{1}{2}}) u_0 +\cos(t G^{\frac{1}{2}})  u_1) \\
& = \mathbf{1}_{[0, A^2(t)]}(G) (-G^{\frac{1}{2}} \sin(t G^{\frac{1}{2}}) u_0 +\cos(t G^{\frac{1}{2}})  u_1) \\
&+ \mathbf{1}_{(A^2(t), \infty)}(G)(-G^{\frac{1}{2}} \sin(t G^{\frac{1}{2}}) u_0 +\cos(t G^{\frac{1}{2}})  u_1)\\
& =  \mathbf{1}_{[0, A^2(t)]}(G) (-G^{\frac{1}{2}} \sin(t G^{\frac{1}{2}})  \langle x \rangle^{-s}  ( \langle x \rangle^{s}  u_0)  + \cos(t G^{\frac{1}{2}}) \langle x \rangle^{-s} (\langle x \rangle^{s} u_1)) \\
&+ \mathbf{1}_{(A^2(t), \infty)}(G)(-\tfrac{\sin(t G^{\frac{1}{2}})}{G^{\eta}} (G^{\frac{1}{2} + \eta} u_0) + \tfrac{\cos(t G^{\frac{1}{2}})}{G^{\eta}}(G^\eta u_1)) \\
&\qefed \partial_tu_{\le A^2(t)} + \partial_t u_{> A^2(t)}. 
\end{split}
\end{equation}
Therefore, under \eqref{technical hypotheses}, by \eqref{LED cosine propagator low freq} and \eqref{LED sine propagator low freq}, for $ |t| \gg 1$,
\begin{equation*}
\begin{gathered}
\| \langle x \rangle^{-s} u_{\le A^2(t)} \|_{H^1} = O(|t|^{-\nu})( \| \langle x \rangle^{s}u_0 \|_{L^2} + \| \langle x \rangle^{s}u_1 \|_{L^2}), \\
\| \langle x \rangle^{-s} \partial_t u_{\le A^2(t)} \|_{L^2} = O(|t|^{-\nu})( \| \langle x \rangle^{s} u_0 \|_{L^2} + \| \langle x \rangle^{s}u_1 \|_{L^2}).
\end{gathered}
\end{equation*}

On the other hand, under \eqref{technical hypotheses},
 \begin{equation*}
 \begin{split}
 \| & \partial_t u_{> A^2(t)} \|_{L^2} \\
 &\le  \| \mathbf{1}_{(A^2(t), \infty)}(G)(\tfrac{\cos(t G^{\frac{1}{2}})}{G^{\eta}}  \|_{L^2 \to L^2} \|G^{\frac{1}{2} + \eta}u_0\|_{L^2} + \| \mathbf{1}_{(A^2(t), \infty)}(G) \tfrac{\sin(t G^{\frac{1}{2}})}{G^{\eta}}\|_{L^2 \to L^2} \| G^{\eta}u_1\|_{L^2} \\
 &= O((\log |t|)^{-2\eta}) (\|G^{\frac{1}{2} + \eta}u_0\|_{L^2} + \| G^{\eta}u_1\|_{L^2}),
 \end{split} 
 \end{equation*}
where we used
 \begin{equation*} 
 \| f(G) \|_{L^2 \to L^2} = \| f \|_{L^\infty}, \qquad f \text{ a bounded Borel function on $\R$}. 
 \end{equation*}

Furthermore, since 
 \begin{equation} \label{sqrt G norm characterize}
 \| \nabla v \|_{L^2} \le \|\sqrt{G} v \|_{L^2_c} \le (\sup_{\R^n} c^{-2}) \| \sqrt{G} v\|_{L^2}
 \end{equation}
we have
 \begin{equation*}
 \begin{split}
 \| & u_{> A^2(t)} \|_{H^1} \\
 &=  O\big( \| \mathbf{1}_{(A^2(t), \infty)}(G)(\tfrac{\cos(t G^{\frac{1}{2}})}{G^{\frac{1}{2}+ \eta}}  \|_{L^2 \to H^1}  \big) \| G^{\frac{1}{2} + \eta}u_0\|_{L^2} \\
 &+ O \big(\|\mathbf{1}_{(A^2(t), \infty)}(G) \tfrac{\sin(t G^{\frac{1}{2}})}{G^{\frac{1}{2} + \eta}} \|_{L^2 \to H^1} \big) \| G^{\eta} u_1 \|_{L^2} \\
  &= O\big( \| \mathbf{1}_{(A^2(t), \infty)}(G)(\tfrac{\cos(t G^{\frac{1}{2}})}{G^{\frac{1}{2} + \eta}}  \|_{L^2 \to L^2} + \| \mathbf{1}_{(A^2(t), \infty)}(G)(\tfrac{\cos(t G^{\frac{1}{2}})}{G^{\eta}}  \|_{L^2 \to L^2}  \big) \| G^{\frac{1}{2} + \eta}u_0\|_{L^2}  \\
 &+ O \big(\|\mathbf{1}_{(A^2(t), \infty)}(G) \tfrac{\sin(tG^{\frac{1}{2}})}{G^{\frac{1}{2} + \eta}} \|_{L^2 \to L^2}  + \|\mathbf{1}_{(A^2(t), \infty)}(G) \tfrac{\sin(t G^{\frac{1}{2}})}{G^{\eta}} \|_{L^2 \to L^2} \big) \| G^{\eta} u_1 \|_{L^2} \\
 &= O((\log |t|)^{-2\eta}) (\| G^{\frac{1}{2} + \eta} u_0\|_{L^2} + \|G^{\eta}u_1\|_{L^2}),
 \end{split} 
 \end{equation*}
 
 Summarizing our conclusions under \eqref{technical hypotheses}:
\begin{gather}
\| \langle x \rangle^{-s} u_{\le A^2(t)}\|_{H^1}  + \| \langle x \rangle^{-s} \partial_t u_{\le A^2(t)} \|_{L^2} = O(|t|^{-\nu}) (\| \langle x \rangle^{s}u_0 \|_{L^2} + \| \langle x \rangle^{s}u_1 \|_{L^2}),\label{power decay le A} \\
 \| u_{> A^2(t)} \|_{H^1} + \| \partial_t u_{> A^2(t)} \|_{L^2} =O( (\log |t|)^{-2\eta })(\| G^{\frac{1}{2} + \eta}u_0\|_{H^2} + \|G^\eta u_1\|_{H^1}). \nonumber
\end{gather}
If $\eta = 1/2$, this implies \eqref{prelim decay} for $s > 1$. Finally, to show \eqref{prelim decay} for any $s > 0$, interpolate between \eqref{power decay le A} and the trivial bound 
\begin{equation*}
\begin{split}
\| \nabla u_{\le A^2(t)}\|_{L^2}  + \|\partial_t u_{\le A^2(t)}\|_{L^2} &= O(\| \cos(t \sqrt{G}) \sqrt{G} u_0 \|_{L^2} + \| \sin(t \sqrt{G}) u_1\|_{L^2}) \\
&= O(1) (\| \sqrt{G} u_0 \|_{L^2} + \|u_1\|_{L^2}).
\end{split}
\end{equation*}
\end{proof}

\section{Semiclassical Carleman estimate} \label{Carleman estimate section}

In this section we give semiclassical estimates that lead to the proof of Lemma \ref{lap lemma}.

\subsection{Regularity and decay of the potential} \label{regularity and decay of potential subsection}

We study semiclassical Schr\"odinger operators 
\begin{equation}   \label{P highd}
 P(\ep ,h) \defeq -h^2 \Delta + V(x ; \ep, h)  : L^2(\R^n) \to L^2(\R^n).
 \end{equation}
Here, we suppose $\ep$ and $h$ vary in  $[-\ep_0, \ep_0]$ and $(0,h_0]$, respectively, for some $\ep_0, h_0 > 0$. The potential $V(x; \ep, h)$ may depend on $\ep$ and $h$ in a manner we specify below, and may be complex-valued, with certain restrictions on its imaginary part.

We are interested in imposing minimal regularity and decay conditions on $V$ such that we can obtain an optimal semiclassical Carleman estimate for \eqref{P highd}. To this end, fix $a > 0$, along with
\begin{equation*}
\begin{gathered}
p : [0, \infty) \to (0, \infty) \text{ decreasing to zero}, \\
m(r) : [0, \infty) \to (0,1] \text{ satisfying }(r + 1)^{-1}m(r) \in L^1((0, \infty),dr) \text{ and }\lim_{r \to \infty}m(r) = 0,
\end{gathered}
\end{equation*}
and 
\begin{equation*}
\mu \text{ a nonnegative, finite, compactly supported Borel measure on $(0,\infty)$.}
\end{equation*}

We require that the real part of $V$ to belongs to $L^\infty(\R^n)$ for all $h \in (0, h_0]$, $\ep  \in [-\ep_0,\ep_0]$, and decomposes into short and long range parts:
\begin{equation} \label{real part}
\real V(\cdot \, ; \ep, h) = V_S(\cdot \, ; \ep, h) + V_L(\cdot \, ; \ep, h).
\end{equation}
For the short range part $V_S$, there exist $c_V, \, \delta_0 > 0$ so that
\begin{equation}
|V_S(x ; \ep, h)| \le c_V h ( r + 1)^{-1 -\delta_0}, \qquad h \in (0, h_0], \, \ep  \in [-\ep_0,\ep_0], \, x \in \R^n.  \label{V short range}
\end{equation}
As for the long range part $V_L$,
\begin{equation}
V_L(x; \ep, h) + a \le p(r), \qquad h \in (0, h_0], \, \ep  \in [-\ep_0,\ep_0], \, \in x \in \R^n.\label{V long range}
 \end{equation}
In addition, for each $h \in (0, h_0]$, $\ep \in [-\ep_0, \ep_0]$, and $\theta \in \US^{n-1}$ we require the mapping
\begin{equation*}
 (0, \infty) \ni r \mapsto V_L(r, \theta; \ep ,h) 
\end{equation*}
to be of locally bounded variation; for any interval $I$ whose closure lies in $(0, \infty)$, the total variation of $V_L(\cdot, \theta; \ep ,h)$ over $I$ should be uniformly bounded with respect to $h$, $\ep$, and $\theta$. We also assume that the associated measure $dV_L(\cdot, \theta; \ep, h)$--reviewed in subsection \ref{BV review section}--satisfies the bound
 \begin{equation}\label{V prime long range}
  \int_E dV_L( \cdot, \theta; \ep, h) \le c_V \int_E (r +1)^{-1} m(r) dr + \int_E \mu,
\end{equation}
for every bounded Borel set $E \subseteq (0, \infty)$.

On the imaginary part of $V$, we also impose a decomposition into short and long range terms,
\begin{equation} \label{imag part}
\imag V(\cdot \, ; \ep, h) = W_S(\cdot \, ; \ep, h) + W_L(\cdot \, ; \ep, h).
\end{equation}
The long range term $W_L$ needs to have a fixed sign, with constants $0 < c_1 = c_1(\ep, h) \le c_2 = c_1(\ep, h)$ so that for all $h \in (0, h_0]$, $\ep  \in [-\ep_0,\ep_0]$, and $x \in \R^n$,
\begin{gather} 
c_1(\ep, h) \le |W_L(x ; \ep, h)| \le c_2(\ep, h) \le c_V, \label{W long range} \\
 c^{-1}_1 c_2 \le c_V.\label{ratio c2 over c1}
\end{gather}
The short range term $W_S$ has the same fixed sign as $W_L$, and
\begin{equation}
  |W_S(x ; \ep, h)| \le c_Vh (r + 1)^{-1 -\delta_0}. \label{W short range} 
\end{equation}

Since the conditions on $V$ are technical, for intuition we encourage the reader to keep in the mind the prototypical example in which $W_L(x; \ep, h) \equiv \ep$, the other terms are independent of $\ep$, and $V_L(x;h) = \tilde{V}_L(x;h) - a$ for $\tilde{V}_{L}$ long range and decaying to zero as $r \to \infty$, i.e.,  
\begin{equation*}
V(x; \ep, h) = \tilde{V}_L(x;h) + V_S(x;h) + W_S(x;h) - a - i \ep.
\end{equation*}
In this case we could think of $a + i\ep$ as playing the role of a spectral parameter.
 
\subsection{Review of BV} \label{BV review section}

We recall well-known properties of functions of bounded variation, which facilitate the proof of our Carleman estimate. Proofs may be found in \cite[Appendix B]{dash23}.

Let $I$ be a (possibly infinite) open interval $I \subseteq \R$. Suppose  $f : I \to \C$ is of locally bounded variation, meaning each of $\real f$ and $\imag f$ is the difference of two increasing functions. For all $x \in I$, put
\begin{equation} \label{LRA}
 f^L(x) \defeq \lim_{\delta \to 0^+}f(x-\delta), \qquad  f^R(x) \defeq \lim_{\delta \to 0^+}f(x+\delta), \qquad f^A(x) \defeq (f^L(x) + f^R(x))/2.
\end{equation}
Recall $f$ is differentiable Lebesgue almost everywhere, so $f(x)= f^L(x) = f^R(x) = f^A(x)$ for almost all $x \in I$.

We may decompose $f$ as 
\begin{equation} \label{decompose f}
f = f_{r, +} - f_{r, -} + i( f_{i,+} - f_{i,-}),
\end{equation}
where the $f_{\sigma,\pm}$, $\sigma \in \{r, i\}$, are increasing functions on $I$. Each $f^R_{\sigma,\pm}$ uniquely determines a regular Borel measure $\mu_{\sigma,\pm}$ on $I$ satisfying $\mu_{\sigma, \pm}(x_1, x_2] = f^R_{\sigma, \pm}(x_2) -  f^R_{\sigma, \pm}(x_1)$, see \cite[Theorem 1.16]{fo}. We put
\begin{equation} \label{df}
    df \defeq \mu_{r, +} - \mu_{r, -} + i( \mu_{i,+} - \mu_{i,-}),
\end{equation}
 which is a complex measure when restricted to any bounded Borel subset of $I$. For any subinterval $(a, b] \subseteq I$,
 
 \begin{equation} \label{ftc}
 \begin{gathered}
  \int_{(a,b]}df = f^R(b) - f^R(a).
  \end{gathered}
 \end{equation}

\begin{proposition}[product rule] \label{prod rule bv prop}
Let $f, \, g : I \to \C$ be functions of locally bounded variation. Then
\begin{equation}\label{prod}
 d(fg) = f^A dg + g^A df
\end{equation}
as measures on a bounded Borel subset of $I$.
\end{proposition}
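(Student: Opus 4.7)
The plan is to verify the identity as measures by checking it on a generating family of sets. Since both sides define complex Borel measures when restricted to any bounded Borel subset of $I$, uniqueness of the Carath\'eodory extension reduces the statement to verifying equality on the semialgebra of half-open intervals $(a, b] \subseteq I$. My first reduction would be to real-valued $f, g$, using $\C$-linearity of the operations $f \mapsto df$ and $(f, g) \mapsto f^A dg$. A second reduction, via the decomposition \eqref{decompose f} and bilinearity, would reduce to the case where $f$ and $g$ are both right-continuous and increasing, so that $df$ and $dg$ are nonnegative finite measures on $(a, b]$.

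In this setting, I would apply \eqref{ftc} to the product $fg$ (which is still locally of bounded variation) to get
\begin{equation*}
 \int_{(a, b]} d(fg) = (fg)^R(b) - (fg)^R(a) = f^R(b) g^R(b) - f^R(a) g^R(a).
\end{equation*}
The goal then is to match this with $\int_{(a,b]} f^A dg + \int_{(a,b]} g^A df$. The natural route is a Fubini argument on the product measure $df \otimes dg$ over the square $(a, b] \times (a, b]$, split along the diagonal into $\{s < t\}$, $\{s > t\}$, and the diagonal $\{s = t\}$. The off-diagonal pieces produce, after integrating out one variable, $\int_{(a,b]} f^L dg$ and $\int_{(a, b]} g^L df$ (or equivalently with $f^R$ and $g^R$), while the diagonal collects the at-most-countable set of common jump points of $f$ and $g$. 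Summing these two off-diagonal contributions gives $f^R(b) g^R(b) - f^R(a) g^R(a)$ up to the diagonal correction.

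The crux is showing that the symmetric choice $f^A dg + g^A df$ absorbs the diagonal correction exactly. At any point $x_0$ where both $f$ and $g$ jump, the atom $\{x_0\}$ receives mass
\begin{equation*}
 f^A(x_0)\bigl(g^R(x_0) - g^L(x_0)\bigr) + g^A(x_0)\bigl(f^R(x_0) - f^L(x_0)\bigr),
\end{equation*}
and a short algebraic manipulation using $f^A = (f^L + f^R)/2$ collapses this to $f^R(x_0) g^R(x_0) - f^L(x_0) g^L(x_0)$, which is exactly the atom of $d(fg)$ at $x_0$. At points where only one of $f$, $g$ jumps, $f^A$ (or $g^A$) coincides with the common one-sided limit of the continuous factor, so no discrepancy arises. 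Away from the countable jump set, $f^A = f^R$ and $g^A = g^R$ almost everywhere with respect to $dg$ and $df$ respectively, since the continuous parts of these measures charge no single point.

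The main obstacle is bookkeeping at the countable set of common discontinuities and verifying that the averaged representatives $f^A, g^A$ are the unique pointwise choices making the symmetric formula exact. Everything else is routine Stieltjes calculus: the off-diagonal Fubini computation, linearity, and the extension from half-open intervals to all bounded Borel sets.
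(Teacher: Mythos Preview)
The paper does not prove this proposition; it cites \cite[Appendix B]{dash23} for the proof. Your approach is correct and standard: the reduction by bilinearity to nonnegative measures, followed by the Fubini argument on $df \otimes dg$ over the square with the diagonal split into $\{s<t\}$, $\{s>t\}$, $\{s=t\}$, is the classical route to the Stieltjes integration-by-parts formula. Your atomwise verification that the averaged representatives $f^A, g^A$ absorb the diagonal correction exactly is the key point, and the algebra there is right: $f^A(g^R-g^L)+g^A(f^R-f^L)=f^Rg^R-f^Lg^L$ at any common jump. One small item worth making explicit is that $fg$ is itself locally of bounded variation (as a product of locally bounded, locally BV functions), so that $d(fg)$ is well-defined and \eqref{ftc} applies to it; you use this but do not state it.
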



\begin{proposition}[chain rule] \label{chain rule bv prop}
Let $f : I \to \R$ be continuous and have locally bounded variation. Then, as measures on a bounded Borel set of $I$,
\begin{equation} \label{chain rule continuous}
    d(e^f) = e^{f} df.
\end{equation}
\end{proposition}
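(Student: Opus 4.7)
The plan is to reduce the identity to monomials via the Taylor expansion of the exponential and then invoke the product rule, Proposition \ref{prod rule bv prop}, exploiting the simplification that continuity of $f$ provides. Since $f$ is continuous, $f^L(x) = f^R(x) = f^A(x) = f(x)$ for every $x \in I$, so \eqref{prod} collapses to $d(fg) = f\, dg + g^A\, df$ for any function $g$ of locally bounded variation on $I$.

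First I would prove by induction on $n \geq 1$ that $d(f^n) = n f^{n-1}\, df$ as complex measures on bounded Borel subsets of $I$. The base case is trivial. Observing that $f^n$ is itself continuous and of locally bounded variation (products of such functions inherit both properties), the inductive step follows from the simplified product rule applied to $f \cdot f^n$, yielding $d(f^{n+1}) = f\, d(f^n) + f^n\, df = (n+1) f^n\, df$. By linearity, the partial Taylor sums $P_N(x) \defeq \sum_{n=0}^{N} x^n/n!$ then satisfy $d(P_N(f)) = P_{N-1}(f)\, df$ as complex measures on bounded Borel subsets of $I$.

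It remains to pass to the limit $N \to \infty$. Fix a compact interval $J \subset I$; since $f$ is continuous, it is bounded on $J$, so $P_N(f) \to e^f$ and $P_{N-1}(f) \to e^f$ uniformly on $J$. For any half-open interval $(a,b] \subseteq J$, identity \eqref{ftc} combined with the uniform convergence gives
\[ \int_{(a,b]} d(P_N(f)) = P_N(f(b)) - P_N(f(a)) \longrightarrow e^{f(b)} - e^{f(a)} = \int_{(a,b]} d(e^f), \]
where the last equality uses that $e^f$ is continuous and of locally bounded variation (the function $\exp$ is Lipschitz on bounded intervals, and so preserves local BV). On the other hand, dominated convergence against the finite total variation measure $|df|$ on $J$ yields $\int_{(a,b]} P_{N-1}(f)\, df \to \int_{(a,b]} e^f\, df$. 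Thus $d(e^f)$ and $e^f\, df$ agree on every half-open subinterval of $J$, hence on all Borel subsets of $J$ by the standard $\pi$-system uniqueness argument; since $J$ is arbitrary, the identity extends to all bounded Borel subsets of $I$. The only mildly delicate point I anticipate is the identification of the limit of $d(P_N(f))$ with $d(e^f)$, which is precisely where continuity of $f$ is essential: it guarantees continuity of $e^f$ and thereby direct applicability of \eqref{ftc} without any averaging corrections from \eqref{LRA}.
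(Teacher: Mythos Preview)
Your argument is correct. The paper does not actually prove this proposition in-text; it states the result and refers the reader to \cite[Appendix B]{dash23} for the proof, so there is no in-paper argument to compare against. Your route via Taylor polynomials and the product rule is a standard and clean way to obtain the chain rule for continuous BV functions, and every step checks out: the induction $d(f^n)=nf^{n-1}\,df$ is justified because both $f$ and $f^n$ are continuous (so the averaged values in Proposition~\ref{prod rule bv prop} drop out), the passage to the limit on the right uses dominated convergence against the finite total variation $|df|$ on a compact $J$, and on the left you correctly identify $\int_{(a,b]} d(P_N(f))$ via \eqref{ftc} and continuity. The only point worth making explicit in the $\pi$-system step is that both $d(e^f)$ and $e^f\,df$ are atomless (since $f$, hence $e^f$, is continuous), so the equality on half-open subintervals $(a,b]\subseteq J$ automatically extends to the closed endpoint and hence to all of $J$, after which the $\pi$--$\lambda$ theorem applies to the finite signed measures without further care.
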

 
\subsection{Preliminary calculations} \label{preliminary calculations subsection}

We set the stage for proving the Carleman estimate by means of the so-called energy method, which is a frequently chosen strategy for establishing Carleman estimates in low regularity (see, e.g., \cite{cavo02, da14, gash22a, ob24}). Throughout, we take $h \in (0, h_0]$, $\ep \in [-\ep_0, \ep_0]$, and assume for all such $h$ and $\ep$, that  $V(\cdot \, ; \ep, h)$ obeys \eqref{real part} through \eqref{W short range}. Let $P(\ep, h)$ be given by \eqref{P highd}. 

We work in polar coordinates, beginning from the well known identity
\begin{equation*}
    r^{\frac{n-1}{2}}(- \Delta) r^{-\frac{n-1}{2}} = -\partial^2_r + r^{-2} \Lambda,
\end{equation*}
where 
\begin{equation} \label{Lambda}
    \Lambda \defeq -\Delta_{\US^{n-1}} + \frac{(n-1)(n-3)}{4},
\end{equation}
and $\Delta_{\US^{n-1}}$ denotes the negative Laplace-Beltrami operator on $\US^{n-1}$. Let $\varphi(r; h)$ be a soon-to-be-constructed phase on $(0, \infty)$ which depends on $h$ but is independent of $\ep$. We ask that that $\varphi$ and $\varphi'$ are nonnegative and locally absolutely continuous, $(\varphi')^2 - h \varphi''$ has locally bounded variation, and $\varphi(0 ; h) = 0$. Using $\varphi$, we form the conjugated operator
\begin{equation} \label{conjugation}
\begin{split}
  P_\varphi(\ep, h) &\defeq e^{\frac{\varphi}{h}} r^{\frac{n-1}{2}} P(\ep, h) r^{-\frac{n-1}{2}} e^{-\frac{\varphi}{h}}\\
  &= -h^2\partial^2_r + 2h \varphi' \partial_r + h^2r^{-2} \Lambda +  V -(\varphi')^2 + h\varphi'' .
 \end{split}
\end{equation}

Let
\begin{equation} \label{function space u}
 u \in e^{\varphi/h} r^{(n-1)/2} C^\infty_0(\R^n).
\end{equation}
Define a spherical energy functional $F[u](r)$,
\begin{equation} \label{F}
    F(r) = F[u](r) \defeq \|hu'(r, \cdot)\|^2 - \langle (h^2 r^{-2} \Lambda +  V_L   -(\varphi')^2  + h \varphi'')u(r, \cdot), u(r, \cdot) \rangle,
\end{equation}
where $\| \cdot \|$ and $\langle \cdot, \cdot \rangle$ denote the norm and inner product on $L^2(\mathbb{S}_\theta^{n-1})$, and complex conjugation in $\langle \cdot, \cdot \rangle$ takes place in the first argument. For a weight $w(r)$ which is independent of $h$ and $\ep$, absolutely continuous, nonnegative, increasing, and bounded, we compute the distributional derivative of $wF$ on $(0, \infty)$. The most delicate term of \eqref{F}  to differentiate is $r \mapsto w(r) \int_{\US^{n-1}} V_L(r, \theta) |u(r, \theta)|^2 dr $. In \cite[Appendix A]{llst24} we show this mapping has locally bounded variation and its distributional derivative is
\begin{equation} \label{technical formula}
  \begin{split}
  C^\infty_0(0, \infty) \ni \phi &\mapsto \int_0^\infty  w(r) \phi(r) \int_{\US^{n-1}} V(r, \theta) 2 \real(\overline{u}u')  d\theta dr\\
   &+\int_0^\infty \int_{\US^{n-1}}  w'(r) \phi(r) |u(r, \theta)|^2  dr d\theta\\
  &+ \int_{\US^{n-1}}  \int_0^\infty  w(r) \phi(r) |u(r, \theta)|^2 dV(r,\theta) d\theta
  \end{split}
  \end{equation} 
In the subsequent calculation we denote the last term of \eqref{technical formula} by $\int_{\US^{n-1}} |u(r, \theta)|^2 w(r) dV_L(r, \theta)d\theta$. We have

\begin{equation} \label{deriv wF}
\begin{split}
    d(wF) &=  wdF +  w'F\\
    &= w(-2\real \langle (-h^2u'' + h^2r^{-2}\Lambda + V_L - (\varphi')^2 + h \varphi'' )u  , u' \rangle \\
     &+2 h^2 r^{-3} \langle \Lambda u, u \rangle +  \|u\|^2 d((\varphi')^2  - h \varphi''))  - \int_{\US^{n-1}} |u(r, \theta)|^2 w(r) dV_L(r, \theta)  \\
     &+ (\|hu'\|^2 - \langle h^2 r^{-2} \Lambda  u, u \rangle + ((\varphi')^2  - h \varphi'' - V_L) \|u\|^2) w' \\
    &= -2 w \real \langle P_\varphi(\ep, h) u, u' \rangle + 2w \real \langle V_S u, u' \rangle + 2w \imag \langle (W_S + W_L ) u,u'\rangle \\
    &+ \|hu'\|^2 (4h^{-1}w \varphi' + w') +\langle h^2r^{-2}(-\Delta_{\US^{n-1}} + 4^{-1}(n-1)(n-3)) u,u\rangle (2wr^{-1} - w') \\
    &+  \|u\|^2 d(w((\varphi')^2  - h \varphi'')) - \| u\|^2 V_L w'   - \int_{\US^{n-1}} |u(r, \theta)|^2 w(r) dV_L(r, \theta)d\theta. 
    \end{split}
\end{equation}


Since $w \varphi' \ge 0$, we can discard the term  $4h^{-1}w \varphi'  \|hu'\|^2$ when finding a lower bound for \eqref{deriv wF}. We can also discard the term involving $-\Delta_{\US^{n-1}}$ if
\begin{equation}
q(r) \defeq 2wr^{-1} - w' \ge 0 \label{w w prime est},
\end{equation}
which we shall arrange. Using also $4^{-1}(n-1)(n-3) \ge -4^{-1}$, \eqref{V short range}, \eqref{V prime long range}, and \eqref{W short range}, we find, for all $\gamma > 0$,
\begin{equation} \label{deriv wF 2}
\begin{split}
    d(wF) &\ge -\tfrac{4w^2}{h^2w'} \|P_\varphi(\ep, h) u\|^2 - 2^{-1}h^{-1} w^2 \| |W_L|^{1/2} hu'\|^2 - 2h^{-1}  \|  |W_L|^{1/2}u\|^2 \\
    &+  \|hu'\|^2 w' ( \tfrac{3}{4} - \tfrac{2 \gamma c_V w}{(r +1)^{1 + \delta_0}w'})  \\
    &+ \|u\|^2 \big( d(w((\varphi')^2 - h\varphi'' )) -V_L w' - c_V(r+1)^{-1}m w -  w\mu - \tfrac{2 c_V w}{\gamma (r + 1)^{1+\delta_0}} -\tfrac{h^2 q}{4r^2}  \big). \\
    \end{split}
\end{equation}
Note that, because $4^{-1}(n-1)(n-3) \ge 0$ except in dimension two, the last term in line three of \eqref{deriv wF 2} can be disregarded except in dimension two. This is relevant to Remark \ref{optimal ext weight remark} below.

By \eqref{W long range} and \eqref{prod}, we bound from above the term $w ^2\| |W_L|^{1/2} hu'\|^2$:
\begin{equation} \label{est hu prime square 1}
\begin{split}
 w^2 \|& |W_L|^{1/2} hu'\|^2 \le c_2 w^2 \| hu'\|^2   \\
 &= \real \big( ( c_2 w^2  \langle hu', hu \rangle)'- c_2 w \langle  h^2u'', u \rangle  - 2 c_2 w w' \langle hu', hu \rangle \big).
 \end{split}
 \end{equation}
 The last two terms in the second line of \eqref{est hu prime square 1} may be estimated as follows. From \eqref{W long range}, \eqref{ratio c2 over c1}, and \eqref{conjugation},
 \begin{equation} \label{est hu prime square 2}
 \begin{gathered}
 \begin{split}
 -2 c_2 w w' \real \langle hu', hu \rangle &\le (w c_1^{1/2}  \|hu'\|)(2c^{-1}_1c_2 w' c_1^{1/2} \|hu\|),  \\
 & \le \tfrac{1}{4} w^2 \| |W_L|^{1/2} hu'\|^2 + 4c^2_V h^2 (w')^2  \| |W_L|^{1/2} u\|^2, 
 \end{split} \\
 \begin{split}
- c_2 w &\real \langle  h^2u'', u \rangle \\
&= c_2 w \real \big(\langle (P_\varphi(\ep, h)  -2h \varphi' \partial_r -h^2 r^{-2} \Lambda - V + (\varphi')^2 - h \varphi'')u ,u  \rangle \big)  \\
 &\le  \tfrac{1}{4} w^2 \| |W_L|^{1/2} hu'\|^2 + \tfrac{c^2_V w^2}{2} \|P_\varphi(\ep, h)u\|^2 \\
 &+ \big( \tfrac{1}{2} + \tfrac{h^2c_V w}{4r^2} + c_V w\|\real V\|_{L^\infty} + (4c^2_V + c_Vw)(\varphi')^2 + h c_V w |\varphi''| \big) \||W_L|^{1/2} u\|^2. 
 \end{split}
 \end{gathered}
\end{equation} 
Therefore, combining \eqref{deriv wF 2}, \eqref{est hu prime square 1}, and \eqref{est hu prime square 2}, we have, for all $h \in (0, h_0]$, $\ep  \in [-\ep_0, \ep_0]$, and $\gamma > 0$.
\begin{equation} \label{deriv wF 3}
\begin{split}
    d(wF) &\ge -\big(\tfrac{4w^2}{h^2w'} + \tfrac{c^2_V w^2}{2} \big)\|P_\varphi(\ep, h) u\|^2 \\
    &-( \real ( c_2 w^2  \langle hu', hu \rangle))' \\
    &- h^{-1} \| |W_L|^{1/2} u\|^2  \big( \tfrac{5}{2} + \tfrac{ h^2 c_V w}{4r^2} + c_V w\|\real V\|_{L^\infty} \\
    &+ (4c^2_V + c_Vw)(\varphi')^2 + h c_V w |\varphi''| + 4c^2_V h^2 (w')^2\big) \\
   &+  \|hu'\|^2w' ( \tfrac{3}{4} - \tfrac{2 \gamma c_V w}{(r +1)^{1 + \delta_0}w'})  \\
   &+ \|u\|^2 \big( d(w((\varphi')^2 - h\varphi'' )) -V_L w' - c_V(r+1)^{-1}m w -  w \mu - \tfrac{2 c_V w}{\gamma (r + 1)^{1+\delta_0}} -\tfrac{h^2 q}{4r^2}  \big).    
    \end{split}
\end{equation}

\subsection{Construction of the phase and weight} \label{phase and weight section}

To produce a Carleman estimate from \eqref{deriv wF 3}, it is essential that we specify $w$ and $\varphi$ precisely, in order that the last two lines line of \eqref{deriv wF 2} have a good lower bound. We thus proceed with designing the appropriate weight and phase.

First we specify several constants. Fix $s$ such that
\begin{equation} \label{s}
1< 2s < 1 + \delta_0.
\end{equation}
Choose $R_0 \ge 1$ large enough so that the measure $\mu$ in \eqref{V prime long range} is supported in $(0, R_0]$, and so that by \eqref{V long range} and $\lim_{r \to \infty}m(r) = 0$, we have for all $h \in (0, h_0]$, $\ep  \in [-\ep_0,\ep_0]$, and $(r, \theta) \in (R_0, \infty) \times \US^{n-1}$,
\begin{equation} \label{R0}
V_L(r, \theta; \ep, h) + a, \, c_Vm(r) , \, \tfrac{16c^2_V \langle r \rangle^{2s}}{(r + 1)^{1+\delta_0}}  \le  \frac{a}{4}.
\end{equation}
 
The weight function we utilize is
\begin{gather} 
w = \begin{cases} r^2 & 0 < r \le M \\
 M^2e^{ \int_{M}^r \max(\kappa_1 (r'+1)^{-1} m(r'), \, \kappa_2 \langle r' \rangle^{-2s}) dr'} & r > M  \\
\end{cases}, \label{w} \\ 
  w' = 2r \mathbf{1}_{(0,M]} + \max( \kappa_1 (r+1)^{-1} m(r), \, \kappa_2 \langle r \rangle^{-2s}) w \mathbf{1}_{(M, \infty)},
\label{w prime}  
\end{gather}
where $M \ge 2R_0 > 2$ and $\kappa_1 \ge 0, \, \kappa_2 \ge 1$ are to be fixed, independent of $h$ and $\ep$, over the course of the proof of Lemma~\ref{goal est lemma} below. Notice that, in the sense of measures,
\begin{equation} \label{w over dw}
\tfrac{w}{w'} \le \tfrac{r}{2} \mathbf{1}_{(0, M]} + \tfrac{ \langle r \rangle^{2s}} {\kappa_2} \mathbf{1}_{(M, \infty)},
\end{equation}
so into the fourth line of \eqref{deriv wF 3},
\begin{equation*}
\begin{split}
 \tfrac{3}{4} - \tfrac{2 \gamma c_V w}{w'(r +1)^{1 + \delta_0}}  &\ge \tfrac{3}{4} - 2 \gamma c_V.
\end{split}
\end{equation*}
Thus we fix 
\begin{equation} \label{gamma}
\gamma = (8c_V)^{-1}.
\end{equation}
Hence \eqref{deriv wF 3} implies,
\begin{equation} \label{deriv wF 4}
\begin{split}
    d(wF) &\ge -\big(\tfrac{4w^2}{h^2w'} + \tfrac{c^2_V w^2}{2} \big)\|P_\varphi(\ep, h) u\|^2 \\
    &-d( \real ( c_2 w^2  \langle hu', hu \rangle)) \\
    &- h^{-1} \| |W_L|^{1/2} u\|^2  \big( \tfrac{5}{2} + \tfrac{ h^2 c_V w}{4r^2} + c_V w\|\real V\|_{L^\infty} \\
    &+ (4c^2_V + c_Vw)(\varphi')^2 + h c_V w |\varphi''| + 4c^2_V h^2 (dw)^2\big) \\
    &+ \tfrac{1}{2} \|hu'\|^2 ( r \mathbf{1}_{(0,M]} +  \langle r \rangle^{-2s} \mathbf{1}_{(M, \infty)})  \\
     &+ \|u\|^2 \big( d(w((\varphi')^2 - h\varphi'' )) -V_L w' - c_V(r+1)^{-1}m w -  w \mu - \tfrac{16 c_V^2 w}{ (r + 1)^{1+\delta_0}} -\tfrac{h^2 q}{4r^2}  \big). 
    \end{split}
\end{equation}

Continuing, define the function $\psi(r)$, independent of $h$ and $\ep$, by 
\begin{gather}
\psi(r) \defeq \begin{cases} 
p(0) + (1 + 16c_V) c_V  + \mu(0, r]  & 0 < r \le R_0 \\
\frac{c_0}{r^2} & R_0 < r \le \frac{M}{2} \\
\frac{64c_0}{M^6} (M - r)^4 & \frac{M}{2} < r \le M \\
0 & r > M
  \end{cases}, \label{psi} \\ 
  c_0 \defeq (p(0) + (1 + 16c_V) c_V + \mu(0, R_0])R^2_0, \nonumber
  \end{gather}
  so that
  \begin{equation} \label{dpsi}
  d\psi = \mu - \frac{2c_0}{r^3}  \mathbf{1}_{(R_0, M/2]} - \frac{256c_0}{M^6} (M - r)^3 \mathbf{1}_{(M/2, M]}.  
  \end{equation}
Note that the choice of the numerical constant $64$ in \eqref{psi} makes $\psi$ continuous at $r = M/2$.

We construct the phase $\varphi$ by analysis of a differential equation involving $\psi$.
\begin{lemma} \label{phase lemma}
  There exists $\varphi(\cdot \, ;h) : (0, \infty) \to [0, \sqrt{\psi(R_0)}M]$ such that for all $h \in (0,h_0]$, $\varphi$ and $\varphi'$ are locally absolutely continuous, $\supp \varphi'( \cdot \, ; h) \subseteq [0, M]$, and 
  \begin{equation*}
  (\varphi')^2(r) - h \varphi''(r) = \psi(r), \qquad r \in (0, \infty). 
  \end{equation*}
  \end{lemma}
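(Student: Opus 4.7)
The plan is to linearize the equation $(\varphi')^2 - h\varphi'' = \psi$ by setting $u := \varphi'$, reducing matters to the scalar Riccati problem
\begin{equation*}
hu' = u^2 - \psi, \qquad u(M) = 0,
\end{equation*}
extended by $u \equiv 0$ on $(M,\infty)$. Since $\psi$ vanishes on $(M,\infty)$ and extends continuously from the left by the quartic formula on $(M/2, M]$ (which has a fourth-order zero at $r = M$), this extension is consistent with the ODE. I would then recover the phase by $\varphi(r) := \int_0^r u(r')\,dr'$, which automatically delivers $\varphi(0;h) = 0$, $\varphi' = u$, and $\supp \varphi'(\cdot;h) \subseteq [0, M]$.

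To solve the Riccati equation, I reverse time by $\tilde{u}(s) := u(M-s)$ for $s \in [0, M]$, obtaining the forward initial value problem
\begin{equation*}
h\tilde{u}' = \psi(M-s) - \tilde{u}^2, \qquad \tilde{u}(0) = 0.
\end{equation*}
The right-hand side is Carath\'eodory (measurable in $s$, since $\psi$ is bounded and of locally bounded variation by \eqref{psi}, and locally Lipschitz in $\tilde{u}$), so local existence and uniqueness of an absolutely continuous solution are standard. To globalize on $[0,M]$ and simultaneously get the $L^\infty$ bound, I would establish the two-sided estimate $0 \le \tilde{u}(s) \le \sqrt{\psi(R_0)}$ by a comparison principle: the constant $0$ is a subsolution since $\psi \ge 0$, while the $h$-independent constant $\sqrt{\psi(R_0)}$ is a supersolution, using that $\psi(R_0) = \sup \psi$. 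The latter supremum is read off from the explicit piecewise formula: on $(0, R_0]$ the function $\psi$ is nondecreasing with value $\psi(R_0) = c_0/R_0^2$ at $R_0$; on $(R_0, M/2]$ it equals $c_0/r^2 \le c_0/R_0^2$; and on $(M/2, M]$ the quartic factor attains its maximum $4c_0/M^2$ at $r = M/2$, which matches $c_0/(M/2)^2$ and is dominated by $c_0/R_0^2$ as $M \ge 2R_0$.

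With $u$ in hand, local absolute continuity of $\varphi$ and $\varphi'$ is automatic: $u$ is bounded by the comparison step, $u' = (u^2 - \psi)/h$ is bounded, hence locally integrable. The ODE identity $(\varphi')^2 - h\varphi'' = \psi$ holds by construction, so $(\varphi')^2 - h\varphi''$ inherits the locally BV regularity of $\psi$ as required in subsection~\ref{preliminary calculations subsection}; the jumps of $\psi$ induced by $\mu$ are carried by $\varphi''$ (which is then a bounded function with jumps, compatible with $\varphi' \in \mathrm{AC}_{\mathrm{loc}}$), while $(\varphi')^2$ stays continuous. The pointwise bound $\varphi(r) \le \sqrt{\psi(R_0)}\,M$ then follows by integrating $u \le \sqrt{\psi(R_0)}$ on $[0, \min(r,M)]$, since $\varphi$ is constant on $[M,\infty)$.

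The main technical obstacle is the uniformity in $h \in (0, h_0]$: both the sup-norm bound on $\varphi'$ and the support property must hold independently of $h$. This is precisely what the comparison against the $h$-independent constant supersolution $\sqrt{\psi(R_0)}$ achieves, and it is also the reason the construction of $\psi$ in \eqref{psi} was engineered to have a cutoff in $(M/2,M]$ that produces $\psi(M)=0$ — without this matching, solving the Riccati equation backward from $u(M)=0$ would either introduce $h$-dependent constants or force $\varphi'$ to have a jump at $M$, violating the absolute-continuity requirement. A minor secondary point is the accommodation of the jump measure $\mu$ on $(0, R_0]$, but since $\psi$ remains bounded and BV there, Carath\'eodory theory handles this without modification.
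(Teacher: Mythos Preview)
Your proposal is correct and follows essentially the same route as the paper: reduce to the Riccati problem $hy' = y^2 - \psi$ with $y(M)=0$, establish the two-sided bound $0 \le y \le \sqrt{\psi(R_0)}$ via comparison, and integrate to recover $\varphi$. Your use of the constant supersolution $\sqrt{\psi(R_0)}$ (exploiting $\psi(R_0)=\sup\psi$) and the constant subsolution $0$ is in fact a bit more direct than the paper's choice of the explicit comparison function $z(r)=\sqrt{\psi(R_0)}\tanh\big(h^{-1}\sqrt{\psi(R_0)}(M-r)\big)$, and your observation that the extension $y\equiv 0$ on $[M,\infty)$ is immediate by ODE uniqueness replaces the paper's iterative bootstrap argument there.
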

  
\begin{proof}

We shall build $\varphi$ in several steps. We begin by solving the initial value problem,
\begin{equation} \label{psi ode}
y'(r) = f_h(y(r), r), \qquad r \in (0, \infty), \, y(M) = 0. 
\end{equation}
where $f_h(x,r) \defeq h^{-1}(x^2 - \psi(r))$ is defined on the rectangle $[0, \sqrt{\psi(R_0)}]_x \times (0, \infty)_r$. By \cite[Chapter 2, Theorem 1.3]{cole}, there exists a small open interval $I \subseteq (0, \infty)$ containing $M$, and a solution $y$ to \eqref{psi ode} which is absolutely continuous in $I$. In fact, this solution is unique on $I$. For if $y_1$, $y_2$ are two solutions to \eqref{psi ode}, then $\tilde{y} \defeq y_1 - y_2$ solves $\tilde{y}' = h^{-1}(y_1 + y_2) \tilde{y}$, $\tilde{y}(M) = 0$, and hence is identically zero on $I$.

We next show that the solution $y$ to \eqref{psi ode} obtained in the previous paragraph $y$ extends to all of $(0, \infty)$ and obeys
\begin{gather}
0 \le y(r) \le \sqrt{\psi(R_0)}, \qquad r \in (0, \infty), \label{y bds}\\
y(r) = 0, \qquad r \ge M. \label{y vanishes past M}
\end{gather}
 This will allow us to conclude the construction of $\varphi$ by setting
\begin{equation} \label{from y to phi}
\varphi(r) \defeq \int_0^r y(s) ds.
\end{equation}

Let us first establish \eqref{y vanishes past M}. Because $y(M) = 0$, there exists $\epsilon \in (0,h)$ so that $[M , M + \e) \subseteq I$ and $|y(r) |\le 1/2$ on  $[M, M + \e)$. Therefore, using \eqref{psi ode} and \eqref{psi}, we see that $|y'(r)| = h^{-1}|y(r)|^2 \le (4h)^{-1}$ on $[M, M + \e)$. Hence for $r \in [M, M + \e)$,
\begin{equation*}
\begin{split}
|y(r)| \le \int^r_{M} |y'(s)| ds 
&\le \frac{\e}{4h} \le \frac{1}{4}.
\end{split}
\end{equation*}
Applying $|y'(r)| = h^{-1}|y(r)|^2$ on $[M, M + \e)$ another time, we then get $|y'(r)| \le (16h)^{-1}$ and use it to show that $|y(r)| \le 16^{-1}$, $ r \in [M  , M + \e)$. Continuing in this fashion, we see that $y(r) = 0$ for $r \in [M  , M + \e)$. Therefore $y$ extends to be identically zero on $[M, \infty)$. 

Moving on, we now confirm \eqref{y bds}. To see that $y \ge 0$, assume for contradiction that there exists $0< r_0 < M$ with $y(r_0) < 0$. Then, because $y' = h^{-1}(y^2 - \psi) \le h^{-1}y^2$,
\begin{equation} \label{bigger than zero}
\begin{split}
y(r_0)^{-1} - y(r)^{-1} & = \int^r_{r_0} \frac{y'(s)}{(y(s))^2} ds  \\
& \le \frac{r - r_0}{h}, \qquad \text{$r > r_0$, $r$ near $r_0$}.
\end{split}
\end{equation}
As $r$ approaches $\inf\{r \in [r_0, \infty): y(r) = 0 \} \le M$, \eqref{bigger than zero} must hold. But this is a contradiction because the left side becomes arbitrarily large, while the right side remains bounded. So $y(r) \ge 0$ where it is defined on $(0, M]$. 

To show $y \le \sqrt{\psi(R_0)}$, we compare $y$ to the solution of the initial value problem
\begin{equation*}
z' = (z^2 - \psi(R_0))/h, \qquad z(M) = 0.  \\
\end{equation*}
This solution exists for all $r > 0$ and is given by
\begin{equation*}
z(r) = \sqrt{\psi(R_0)} \tanh \big(h^{-1} \sqrt{\psi(R_0)}(M - r)\big).
\end{equation*}
where $\tanh$ denotes the hyperbolic tangent.
Suppose for contradiction that there exists $r_0 < M$ such that $y(r_0) > z(r_0)$. Set $\zeta \defeq y -z$. Then
$\zeta'  \ge h^{-1}(y+z)\zeta$, $\zeta(r_0) >0$, and $\zeta(M) =0$. 

Put $r_1 \defeq \inf \{r \in (r_0, M]: \zeta(r) = 0\}$. We derive a contradiction from

\begin{equation*}
-\zeta(r_0) = \int_{r_0}^{r_1} \zeta'(r) dr \ge h^{-1} \int_{r_0}^{r_1} (y+z)\zeta dr
\end{equation*}
because $-\zeta(r_0) < 0$, while $\int_{r_0}^{r_1} (y+z)\zeta dr \ge 0$ by the definition of $r_1$. 

So we have that  $0 \le y \le z \le \sqrt{\psi(R_0)}$ where it is defined on $(0, M)$. It then follows by \cite[Chapter 2, Theorem 1.3]{cole} that $y$ extends to all of $(0, M)$, where it obeys the same bounds.\\
\end{proof}

\subsection{Proof of key lower bound} \label{key lwr bd section}

\begin{lemma} \label{goal est lemma} 
Let $a$, $s$, and $\gamma$ be as in \eqref{V long range}, \eqref{s} and $\eqref{gamma}$, respectively. Let $w$ be as in \eqref{w} and $\varphi$ as constructed in Lemma \ref{phase lemma}. There exist $M \ge 2R_0 > 2$, $\kappa_1 \ge 0$, and $\kappa_2 \ge 1$ as in \eqref{w}, along with $C > 0$, all independent of $h$ and $\ep$, so that \eqref{w w prime est} holds and 
\begin{equation} \label{goal est}
\begin{split}
d(w((\varphi')^2 &- h\varphi'' )) -V_L w' - c_V(r+1)^{-1}m w -  w \mu - \tfrac{16 c_V^2 w}{ (r + 1)^{1+\delta_0}} -\tfrac{h^2 q}{4r^2} \\
 &\ge  ar \mathbf{1}_{(0,M]} + \langle r \rangle^{-2s}\mathbf{1}_{(M, \infty)}.
\end{split}
\end{equation}
\end{lemma}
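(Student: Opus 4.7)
The plan is to establish the distributional inequality \eqref{goal est} on $(0,\infty)$ by splitting the domain into the four regions determined by the piecewise structure of $w$ in \eqref{w} and $\psi$ in \eqref{psi}: $(0,R_0]$, $(R_0,M/2]$, $(M/2,M]$, and $(M,\infty)$. By Lemma \ref{phase lemma}, $(\varphi')^2 - h\varphi'' = \psi$, so $d(w((\varphi')^2 - h\varphi'')) = d(w\psi)$; expanding this via the product rule \eqref{prod} and using continuity of $w$ (which gives $w^A = w$) yields $d(w\psi) = w\, d\psi + \psi^A\, dw$. I would fix the constants in sequence: first $R_0 \ge 1$ via \eqref{R0}; then $\kappa_1 = 4c_V/a$; then $\kappa_2 \ge 1$ large; finally $M \ge 2R_0$ large, depending on the earlier constants together with $c_0$, $h_0$, $a$, $c_V$, $s$, and the decay rate of $m$. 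Before addressing \eqref{goal est}, I would verify $q \ge 0$: on $(0,M]$, $w = r^2$ forces $q \equiv 0$, and on $(M,\infty)$ the inequality $w'/w = \max(\kappa_1(r+1)^{-1}m, \kappa_2\langle r\rangle^{-2s}) \le 2/r$ follows from $m(r) \to 0$ and $2s > 1$ once $M$ is sufficiently large.

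On Region 1, $(0,R_0]$, the continuous part of $d(w\psi)$ has density $\psi^A \cdot 2r$ while the atomic part is $w\, d\mu$, which exactly cancels the $-w\mu$ term on the LHS of \eqref{goal est} since $w$ is continuous. Combining $\psi^A \ge p(0) + (1+16c_V)c_V$, $-V_L \ge a - p(0)$, and the crude bounds $c_V(r+1)^{-1}m\, r^2 \le c_V r$ and $16c_V^2 r^2/(r+1)^{1+\delta_0} \le 16c_V^2 r$, the continuous density exceeds $2ar + c_V r + 16c_V^2 r$, which beats $ar$ comfortably. On Region 2, $(R_0,M/2]$, $w\psi \equiv c_0$ so $d(w\psi) = 0$; the bound \eqref{R0} yields $-V_L w' \ge 3ar/2$, while $-c_V(r+1)^{-1}m\, r^2 \ge -ar/4$ (via $c_V m \le a/4$ and $r^2/(r+1)\le r$) and $-16c_V^2 r^2/(r+1)^{1+\delta_0} \ge -ar^2/(4\langle r\rangle^{2s}) \ge -ar/4$ (using $\langle r\rangle^{2s} \ge r^{2s}$, $r \ge R_0 \ge 1$, and $r^{1-2s}\le 1$ since $s > 1/2$). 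The sum is at least $5ar/4$, with slack $ar/4$.

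Region 3, $(M/2,M]$, is where the negative excursion of $\psi$ in \eqref{psi} costs something: a direct computation of $(w\psi)' = 128 c_0 r(M-r)^3(M-3r)/M^6$ yields $|(w\psi)'| \le 32 c_0/M$, so $d(w\psi) \ge -(32c_0/M)\, dr$. This is absorbed by the $ar/4$ slack inherited from Region 2, provided $M^2 \ge 256 c_0/a$. On Region 4, $(M,\infty)$, $\psi = 0$, $\mu$ is zero here, and $w$ is uniformly bounded, since the exponent defining $w$ in \eqref{w} is integrable at infinity by the assumptions $(r+1)^{-1}m \in L^1$ and $2s > 1$. The main driver $-V_L w' \ge (3a/4) w'$ controls the short-range perturbations via the chosen $\kappa_1$ and $\kappa_2$: by construction $w' \ge \kappa_1(r+1)^{-1}m\, w$ gives $-c_V(r+1)^{-1}m w \ge -(a/4) w'$, and the bound $16c_V^2\langle r\rangle^{2s}/(r+1)^{1+\delta_0} \le a/4$ from \eqref{R0} combined with $w' \ge \kappa_2\langle r\rangle^{-2s} w$ gives $-16 c_V^2 w/(r+1)^{1+\delta_0} \ge -(a/(4\kappa_2))w' \ge -(a/4) w'$. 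The remainder $(a/4) w' \ge (a\kappa_2 M^2/4)\langle r\rangle^{-2s}$, using $w \ge M^2$, dominates $\langle r\rangle^{-2s}$ once $\kappa_2$ is chosen large. The only further nuisance is the dimension-two residual $-h^2 q/(4r^2) \ge -h_0^2 w/(2r^3)$, which is absent for $n \ge 3$; in $n = 2$, the hypothesis $\delta_0 > 2$ allows $s \in (1/2, 3/2)$, the range in which $\langle r\rangle^{2s}/r^3$ is bounded on $(M,\infty)$, and a direct comparison shows $h_0^2 w/(2r^3)$ is at most $O(\kappa_2^{-1}) w'$, absorbed by a further enlargement of $\kappa_2$ and $M$.

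The main obstacle is the coupled selection of $R_0$, $\kappa_1$, $\kappa_2$, $M$ (in that order) so that every term in \eqref{goal est} can be absorbed into either $-V_L w'$ on the interior or the $\kappa_2\langle r\rangle^{-2s} w$ component of $w'$ on the exterior, while handling the transition near $r = M$ where $d(w\psi)$ turns negative and, in dimension two, where the curvature correction $-h^2 q/(4r^2)$ must be absorbed by the exterior weight. Once these constants are arranged, every region reduces to an elementary calculus inequality, with the product-rule identity for $d(w\psi)$ and the cancellation of the atomic $\mu$-contribution being the only nontrivial measure-theoretic steps.
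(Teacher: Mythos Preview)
Your region-by-region scheme on $(0,R_0]$, $(R_0,M/2]$, $(M/2,M]$, $(M,\infty)$, with $d(w\psi)=\psi^A\,dw+w\,d\psi$, is essentially the paper's argument; the paper keeps $\psi\,w'$ and $w\,d\psi$ in separate groups rather than recombining them into $d(w\psi)$, but this is purely organizational. The $\mu$-cancellation on $(0,R_0]$, the choice $\kappa_1=4c_V/a$, and the verification of $q\ge 0$ all agree with the paper.

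The gap is an arithmetic slip in Region~2: $\tfrac{3a}{2}r-\tfrac{a}{4}r-\tfrac{a}{4}r=ar$, not $\tfrac{5a}{4}r$. So the claimed $\tfrac{ar}{4}$ slack does not exist, and your Region~3 bound collapses to $ar+(w\psi)'<ar$, which fails. (Amusingly, the paper's own display has a parallel slip, writing $\tfrac{3a}{4}w'$ where its stated bounds only yield $\tfrac{a}{2}w'$.) Two easy repairs are available. First, since \eqref{deriv wF 5} uses the lemma only through a generic constant $C^{-1}$, a lower bound $(a/2)r\,\mathbf 1_{(0,M]}$ already suffices; with your estimate $|(w\psi)'|\le 32c_0/M$ and $r\ge M/2$ one gets $ar-32c_0/M\ge (a/2)r$ once $M^2\ge 128c_0/a$. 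Second, to recover $ar$ exactly, observe that on $(M/2,M]$ the losses $c_V(r{+}1)^{-1}m\,r^2\le c_Vm(r)\,r$ and $16c_V^2 r^2/(r{+}1)^{1+\delta_0}$ are each $o(r)$ as $M\to\infty$ (using $m\to 0$ and $2s>1$), so enlarging $M$ beyond what \eqref{R0} demands restores the slack. A smaller issue: your Region~4 control of $-h^2q/(4r^2)$ needs $\langle r\rangle^{2s}/r^3$ bounded on $(M,\infty)$, i.e.\ $2s<3$; the remark invoking ``$\delta_0>2$ in dimension two'' conflates the low-frequency hypothesis with the Carleman setup, and in any case \eqref{s} does not force $s<3/2$ when $\delta_0>2$. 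Since $s$ is a free parameter and the downstream application (Lemma~\ref{lap lemma}) takes $s<1$, one simply restricts to $s<3/2$ here, as the paper also implicitly does.
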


\begin{proof}

First we show \eqref{goal est}. We have,
\begin{equation*}
\begin{split}
 d(w((\varphi')^2 &- h\varphi'' )) -V_L w' - c_V(r+1)^{-1}m w -  w \mu - \tfrac{16 c_V^2 w}{ (r + 1)^{1+\delta_0}} -\tfrac{h^2 q}{4r^2}  \\
&=(a + \psi - (V_L+a))w'  + w(d\psi - c_V(r+1)^{-1}m - \mu) \\
& - \tfrac{16 c^2_V w}{ (r + 1)^{1+\delta_0}} -\tfrac{h^2 q}{4r^2}.
\end{split}
\end{equation*}
Now estimate, using \eqref{R0} and \eqref{psi},
\begin{equation*}
\begin{split}
(a &+ \psi - (V_L + a))w' - \tfrac{16 c^2_V w}{ (r + 1)^{1+\delta_0}} -\tfrac{h^2 q}{4r^2} \\
&\ge w'(a + (p(0) + (1 + 16c_V)c_V + \mu(0, r]) \mathbf{1}_{(0, R_0]} + \tfrac{c_0}{r^2} \mathbf{1}_{(R_0, M/2]} + \tfrac{64c_0}{M^6} (M - r)^4 \mathbf{1}_{(M/2, M]} \\
&- p(0) \mathbf{1}_{(0, R_0]} - \tfrac{a}{4} \mathbf{1}_{(R_0, \infty)} - \tfrac{16 c^2_V w}{(r + 1)^{1+\delta_0}w'} ) -\tfrac{h^2 q}{4r^2}
\end{split}
\end{equation*}
From \eqref{s}, \eqref{R0}, and \eqref{w over dw}, 
\begin{equation*}
    \tfrac{16 c^2_V w}{(r + 1)^{1+\delta_0}w'} \le  16c_V^2 \mathbf{1}_{(0, R_0]} + \tfrac{a}{4} \mathbf{1}_{[R_0, \infty)}.
\end{equation*}
Therefore
\begin{equation*}
\begin{split}
(a &+ \psi - (V_L + a))w' - \tfrac{16 c^2_V w}{ (r + 1)^{1+\delta_0}} -\tfrac{h^2 q}{4r^2} \\
& \ge w'(\tfrac{3 a}{4} + c_V \mathbf{1}_{(0, R_0]} + \tfrac{c_0}{r^2} \mathbf{1}_{(R_0, M/2]} + \tfrac{64c_0}{M^6}(M - r)^4 \mathbf{1}_{(M/2, M]}) -\tfrac{h^2 q}{4r^2} \\
& = \tfrac{3 a}{2} r \mathbf{1}_{(0, M]} + \tfrac{3 a}{4} \max( \kappa_1 (r+1)^{-1} m, \kappa_2 \langle r \rangle^{-2s}) w \mathbf{1}_{(M, \infty)}\\
&+2c_V r \mathbf{1}_{(0, R_0]} +  \tfrac{2c_0}{r} \mathbf{1}_{(R_0, M/2]} + \tfrac{128c_0}{M^6}r(M - r)^4 \mathbf{1}_{(M/2, M]} -\tfrac{h^2 q}{4r^2},
\end{split}
\end{equation*}
where we used \eqref{w prime}.

On the other hand, by \eqref{V prime long range}, \eqref{R0}, \eqref{w}, and \eqref{dpsi},
\begin{equation*}
\begin{split}
w(d\psi - c_V(r + 1)^{-1}m - \mu) &=  w(\mu - \tfrac{2c_0}{r^3}  \mathbf{1}_{(R_0, M/2]} - \tfrac{256c_0}{M^6} (M - r)^3 \mathbf{1}_{(M/2, M]} \\
&- c_V(r+1)^{-1} m  \mathbf{1}_{(0,R_0] \cup (M, \infty)} - \tfrac{a}{4r} \mathbf{1}_{(R_0, M]} - \mu) \\
&\ge -\tfrac{2c_0}{r}  \mathbf{1}_{(R_0, M/2]} - \tfrac{256c_0}{M^6} r^2 (M - r)^3 \mathbf{1}_{(M/2, M]} \\
&- c_Vr^2 (r+1)^{-1} m \mathbf{1}_{(0,R_0]} - \tfrac{a}{4} r \mathbf{1}_{(R_0, M]} - c_V  (r + 1)^{-1}m w \mathbf{1}_{(M, \infty)}. 
\end{split}
\end{equation*}

Adding the previous two estimates,
\begin{equation} \label{pre pre final positivity}
\begin{split}
( \psi &- V_L)w'  + w(d\psi - c_V(r+ 1)^{-1}m - \mu) - \tfrac{16 c^2_V w}{ (r + 1)^{1+\delta_0}} -\tfrac{h^2 q}{4r^2} \\
& \ge r(\tfrac{3 a}{2}  \mathbf{1}_{(0,M]} - \tfrac{ a}{4} \mathbf{1}_{(R_0, M]} - \tfrac{128c_0}{M^6}(M-r)^3(3r - M) \mathbf{1}_{(M/2, M]})   \\
&+\tfrac{\kappa_2 a}{2} w \langle r \rangle^{-2s} \mathbf{1}_{(M, \infty)} + (\tfrac{\kappa_1 a}{4}- c_V) (r +1)^{-1}m(r) w \mathbf{1}_{(M, \infty)}-\tfrac{h^2 q}{4r^2}.
\end{split}
\end{equation}
First, on $[M/2, M]$, the maximum value of $(M-r)^3(3r-M)$ is $M^4/16$ at $r = M/2$. In view of this, choose $M$ large enough (depending on $a$ and $c_0$) so that $a/4 - 8c_0M^{-2} \ge 0$ ($M = (32c_0)^{1/2}a^{-1/2}$ suffices). Second, observe that by the definition of $q$ (see \eqref{w w prime est}) and \eqref{w}, $q(r) = 0$ for $r \in (0,M]$, while for $r > M$,
\begin{equation*}
-4^{-1}h^2 qr^{-2} = -4^{-1}h^2(2wr^{-1} - w')r^{-2} \ge - 2^{-1}h^2_0wr^{-3} \ge  - 2^{-1}h^2_0w(\langle r \rangle^{2s} r^{-3})\langle r \rangle^{-2s}. 
\end{equation*}
Applying all of these to \eqref{pre pre final positivity} gives
\begin{equation} \label{pre final positivity}
\begin{split}
( \psi &- V_L)w'  + w(d\psi - c_V(r + 1)^{-1}m - \mu) - \tfrac{16 c^2_V w}{ (r + 1)^{1+\delta_0}} -\tfrac{h^2 q}{4r^2} \\
& \ge  a r  \mathbf{1}_{(0,M]} + \tfrac{\kappa_2 a}{4} \langle r \rangle^{-2s} \mathbf{1}_{(M, \infty)} \\
&+ (\tfrac{\kappa_2 a}{4}  - \tfrac{h^2_0 }{2})  \langle r \rangle^{-2s} w \mathbf{1}_{(M, \infty)}  + (\tfrac{\kappa_1 a}{4}- c_V) (r +1)^{-1}m(r) w \mathbf{1}_{(M, \infty)}. 
\end{split}
\end{equation}

At this point, we fix $\kappa_1 \ge 0$ and $\kappa_2 \ge 1$ large enough so that 
\begin{equation*}
\tfrac{\kappa_1 a}{4}- c_V \ge 0, \qquad \tfrac{\kappa_2 a}{4}  - \tfrac{h^2_0 }{2} \ge 1,
\end{equation*}
completing the proof of \eqref{goal est}.

Finally, we show $q \ge 0$ for $r > M$. By \eqref{w} and \eqref{w prime}
\begin{equation} \label{W ge r over 2}
\begin{split}
2wr^{-1} - w' &= (2r\mathbf{1}_{(0, M)} +  2r^{-1} w \mathbf{1}_{(M, \infty)} ) \\
 &- (2r\mathbf{1}_{(0, M)} + \max (\kappa_1(r+1)^{-1} m(r),\, \kappa_2 \langle r \rangle^{-2s}) w\mathbf{1}_{(M, \infty)}) \\
 &= (2r^{-1} - \max (\kappa_1(r+1)^{-1} m(r), \, \kappa_2 \langle r \rangle^{-2s})) w \mathbf{1}_{(M, \infty)}.
\end{split}
\end{equation}
Now it may be necessary to increase $M$ so that $\kappa_1 (r + 1)^{-1} m(r), \, \kappa_2 \langle r \rangle^{-2s} \le 2r^{-1}$ for $r > M$ (recall $\lim_{r \to \infty} m(r) = 0$). \\
\end{proof}
\begin{remark} \label{optimal ext weight remark}

We reexamine the left side of \eqref{goal est} in the special case where $n \ge 3$ and, for all $h_0 \in (0, h_0]$ and $\ep \in [-\ep_0, \ep_0]$, $V_S = W_S = 0$ and $V_L(\cdot \, ; \ep, h)$ has support in $\overline{B(0,R_0)}$. If $n \ge 3$, we may drop the term $-h^2q(4r^2)^{-1}$ as explained before. Because the short range potentials vanish, we may disregard $-16c^2_Vw(r +1)^{-1-\delta_0}$ too. Finally, the support property of $V_L$ means we can ignore $-c_V(r + 1)^{-1} dr$. Given these simplifications we may take $\kappa_1 = 0$ and $\kappa_2 = 1$ (so \eqref{w w prime est} holds trivially because $2r^{-1} - \langle r \rangle^{-2s} \ge 0$), and we arrive at a streamlined version of \eqref{pre final positivity}:
\begin{equation} 
( \psi - V_L)w'  + w(d\psi - \mu)  \ge a r \mathbf{1}_{(0,M]} + \tfrac{ a}{2} \langle r \rangle^{-2s} w\mathbf{1}_{(M, \infty)}, 
\end{equation}
valid for $M = \max(2R_0, (32c_0)^{1/2}a^{-1/2})$ or larger.
\end{remark}

\subsection{Carleman estimate} \label{Carleman est subsection}
In this subsection, we prove the following semiclassical estimates, which are derived from a Carleman estimate established in the process.
 
\begin{lemma} \label{Carleman lemma}
Suppose that, for all $h \in (0, h_0]$ and $\ep \in [-\ep_0, \ep_0]$, $V(\cdot\, ; \ep, h)$ obeys \eqref{real part} through \eqref{W short range}. Let $s > 1/2$ be as in \eqref{s}. Let the weight $w$ and phase $\varphi$ be as designed in \eqref{w} and Lemma \ref{phase lemma}, respectively, with the constants $\gamma$, $R_0$, $M$, $\kappa_1$ and $\kappa_2$ as chosen over the course of Subsections \ref{phase and weight section} and \ref{key lwr bd section}. There exists $C > 0$ independent of $h$ and $\ep$ so that for all $h \in (0,h_0]$, $\ep \in [-\ep_0, \ep_0]$, and $v \in C_{0}^\infty(\mathbb{R}^n)$,
\begin{gather} 
\|\langle x \rangle^{-s} v \|^2_{L^2} \leq
  e^{C/h} \big( \|\langle x \rangle^{s} P( \ep,h)v \|^2_{L^2} 
+  \| |W_L|^{1/2} v \|^2_{L^2} \big), \label{Carleman est} \\
\|\langle x \rangle^{-s} \mathbf{1}_{> M} v \|^2_{L^2} \leq
  \frac{C}{h^2} \|\langle x \rangle^{s} P( \ep,h)v \|^2_{L^2} 
+   \frac{C}{h} \| |W_L|^{1/2} v \|^2_{L^2(\R^n)}. \label{ext Carleman est}
\end{gather}
\end{lemma}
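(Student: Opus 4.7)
The plan is to integrate the pointwise differential inequality \eqref{deriv wF 4}, with the coercivity bound \eqref{goal est} substituted into it, over $(0,\infty)$ applied to a conjugated version of $v$, and then undo the conjugation. Given $v \in C_0^\infty(\R^n)$, I first set $u(r,\theta) \defeq e^{\varphi(r)/h} r^{(n-1)/2} v(r\theta)$, so that $u$ belongs to the class \eqref{function space u} and the conjugation identity \eqref{conjugation} yields $P_\varphi(\ep,h) u = e^{\varphi/h} r^{(n-1)/2} P(\ep,h) v$.

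Next, I integrate \eqref{deriv wF 4} over $(0,\infty)$ as an inequality of measures. The left-hand side $\int_0^\infty d(wF) = [wF]_0^\infty$ vanishes: at infinity by compact support of $u$, and at zero because $w(r) = r^2$ near the origin while the most singular term in $F$ scales like $r^{n-3}$, so $wF = O(r^{n-1}) \to 0$ in any dimension $n \ge 2$. Likewise the exact term $d(\real(c_2 w^2 \langle hu', hu\rangle))$ integrates to zero. Substituting \eqref{goal est} into the $\|u\|^2$-coefficient and keeping the positive $\|hu'\|^2$ contribution, I arrive at
\begin{equation*}
\begin{split}
\int_0^\infty &(a r\,\mathbf{1}_{(0,M]} + \langle r\rangle^{-2s}\mathbf{1}_{(M,\infty)})\|u\|^2\,dr\\
&\le \int_0^\infty \bigl(\tfrac{4w^2}{h^2 w'} + \tfrac{c_V^2 w^2}{2}\bigr)\|P_\varphi u\|^2\,dr + \tfrac{C}{h}\int_0^\infty \||W_L|^{1/2}u\|^2\,dr.
\end{split}
\end{equation*}
To extract the constant $C$, I use that $(\varphi')^2 \le \psi(R_0)$ and $h|\varphi''| = |(\varphi')^2 - \psi| \le 2\psi(R_0)$ by Lemma \ref{phase lemma}, and that $w$ is uniformly bounded because $\int_M^\infty \max(\kappa_1(r'+1)^{-1}m(r'), \kappa_2\langle r'\rangle^{-2s})\,dr'$ converges by the hypotheses on $m$ and by $2s > 1$. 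For the $P_\varphi$ coefficient, \eqref{w over dw} gives $w^2/w' \lesssim r w$ on $(0,M]$ and $w^2/w' \lesssim \langle r\rangle^{2s} w$ on $(M,\infty)$, both ultimately dominated by $C\langle r\rangle^{2s}$ after using the boundedness of $w$.

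Finally, I undo the conjugation via $\|u(r,\cdot)\|^2 = e^{2\varphi(r)/h} r^{n-1}\int_{\US^{n-1}}|v(r\theta)|^2\,d\theta$, and the analogous identity for $\|P_\varphi u\|^2$ and $\||W_L|^{1/2}u\|^2$. Since $0 \le \varphi \le \sqrt{\psi(R_0)}\,M$, the factor $e^{2\varphi/h}$ is pinched between $1$ and $e^{C/h}$, producing the $e^{C/h}$ loss in \eqref{Carleman est}. Near the origin, the weight $r$ in $a r\,\mathbf{1}_{(0,M]}$ combined with the Jacobian $r^{n-1}$ gives $r^n$, comparable to $\langle x\rangle^{-2s}$ on the compact region $|x|\le M$. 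The crucial observation for \eqref{ext Carleman est} is that $\varphi \equiv 0$ on $(M,\infty)$ by Lemma \ref{phase lemma}, so on that region the conjugation factor equals $1$; restricting the left-hand side to $r > M$ therefore yields \eqref{ext Carleman est} with clean constants $Ch^{-2}$ and $Ch^{-1}$ and no exponential loss.

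The main obstacle will be bookkeeping the weights carefully enough to verify simultaneously that the global estimate \eqref{Carleman est} absorbs the $e^{C/h}$ factor generated by the conjugation and that the exterior estimate \eqref{ext Carleman est} genuinely escapes it. A secondary technical point is confirming the vanishing of the boundary term $wF$ at $r = 0$ in dimension two, where the contribution $4^{-1}(n-1)(n-3) = -1/4$ to $\Lambda$ is negative; this is already handled upstream by the $-h^2 q/(4r^2)$ correction incorporated into \eqref{deriv wF 4} and controlled via \eqref{goal est}.
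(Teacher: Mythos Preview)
There is a genuine gap in your argument for \eqref{Carleman est}: the near-origin control fails. After undoing the conjugation, the Jacobian $r^{n-1}$ appears on \emph{both} sides (it is built into the polar-coordinate representation of $\|\langle x\rangle^{-s}v\|_{L^2}^2$ as well), so it cancels and the effective weight on $|x|\le M$ is simply $r$, not $r^n$. Since $r\to 0$ while $\langle x\rangle^{-2s}\to 1$, the inequality $r\gtrsim\langle r\rangle^{-2s}$ is false near the origin, and your integrated estimate only gives $\int_{|x|\le M}|x|\,|v|^2\,dx$ on the left, which does not dominate $\|\langle x\rangle^{-s}\mathbf 1_{\le M}v\|_{L^2}^2$. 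This is precisely why the paper's proof is in three parts: the integration you describe yields only the ``away-from-origin'' estimate \eqref{just after integrate}, which has a loss at $r=0$; a separate near-origin estimate based on the Mellin transform (Obovu's lemma, \eqref{obovu est}) is then needed and glued to the first.

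There is also an error in your derivation of \eqref{ext Carleman est}. Lemma \ref{phase lemma} gives $\supp\varphi'\subseteq[0,M]$, so $\varphi$ is \emph{constant} on $(M,\infty)$, equal to $\max\varphi=\varphi(M)>0$, not zero (recall $\varphi(0)=0$ and $\varphi'\ge 0$). With your claim $\varphi\equiv 0$ on $(M,\infty)$, the left side would carry factor $1$ but the right side, which integrates over all of $(0,\infty)$, would still carry $e^{2\varphi/h}\le e^{2\max\varphi/h}$, producing an unwanted $e^{C/h}$ loss. The correct mechanism is that $\varphi=\max\varphi$ on $(M,\infty)$, so the \emph{same} factor $e^{2\max\varphi/h}$ appears on both sides and can be divided out.
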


The proof of Lemma \ref{Carleman lemma} proceeds in three steps. The first is to establish the away-from-origin Carleman estimate \eqref{just after integrate}, which has a loss at the origin, but immediately implies \eqref{ext Carleman est}. The second step is to use a modification of Obovu's result \cite[Lemma 2.2]{ob24}, which is based on Mellin transform techniques, to obtain an estimate for small $r$ which does not have a loss as $r \to 0$. In fact, the pertinent weight in Obovu's estimate is unbounded as $r \to 0$. We call this the near-origin estimate. The third and final step is to glue together the near-origin and away-from-origin estimates, to obtain \eqref{Carleman est}.

\begin{proof}[Proof of Lemma \ref{Carleman lemma}]

We remind the reader that we use the notation. $\|u\| \defeq \|u(r, \cdot)\|_{L^2(\US_\theta^{n-1})}$. In the following, $\int_{r,\theta}$ denotes the integral over $(0,\infty) \times \US^{n-1}$ with respect to the measure $dr d\theta$. Throughout, $C$ denotes a positive constant whose precise value changes, but is always independent of $h$ and $\ep$.

\subsubsection{Away-from-origin estimate}

We begin by combining \eqref{deriv wF 4} with \eqref{goal est}, which implies 
\begin{equation} \label{deriv wF 5}
\begin{split}
    d(wF) &\ge -\big(\tfrac{4w^2}{h^2w'} + \tfrac{c^2_V w^2}{2} \big)\|P_\varphi(\ep, h) u\|^2 \\
    &-d( \real ( c_2 w^2  \langle hu', hu \rangle)) \\
    &- h^{-1} \| |W_L|^{1/2} u\|^2  \big( \tfrac{5}{2} + \tfrac{ h^2 c_V w}{4r^2} + c_V w\|\real V\|_{L^\infty} \\
    &+ (4c^2_V + c_Vw)(\varphi')^2 + h c_V w |\varphi''| + 4c^2_V h^2 (dw)^2\big) \\
    &+ C^{-1} (\|u\|^2 + \|hu'\|^2) ( r \mathbf{1}_{(0,M]} +  \langle r \rangle^{-2s} \mathbf{1}_{(M, \infty)}). 
    \end{split}
\end{equation}
Recall that $u \in e^{\varphi/h} r^{(n-1)/2} C^\infty_0(\R^n)$ as in \eqref{function space u}.

Use \eqref{ftc} to integrate both sides of \eqref{deriv wF 5} over $(1/k, k]$, $k \in \NN$, with respect to $dr$. Then send $k \to \infty$. We have $wF(0) = 0$ since $w(r) = r^2$ near $r = 0$, while $wF(r) = 0$ for $r$ large since $u$ has compact support. The boundary terms coming from line two of \eqref{deriv wF 5} vanish too.  Therefore, for all $h \in (0, h_0]$ and $\ep \in [-\ep_0, \ep_0]$,
\begin{equation} \label{just after integrate}
\begin{split}
    \int_{r,\theta} & (|u|^2 + |hu'|^2)(r \mathbf{1}_{(0,M]} + \langle r \rangle^{-2s} \mathbf{1}_{(M, \infty)}) \\
      & \le \frac{C}{h^2} \int_{r,\theta} \langle r \rangle^{2s}|P_\varphi(\ep,h)u|^2  +  \frac{C}{h} \int_{r, \theta} |W_L| |u|^2.
 \end{split}
    \end{equation}
Here, we used that, $w^2/w' \le C \langle r \rangle^{2s}$. This is the away-from-origin estimate. Applying \eqref{conjugation} and that $\varphi(r) = \max \varphi$ for $r > M$, we divide both sides of \eqref{just after integrate} by $e^{2\max \varphi/h}$ to obtain \eqref{ext Carleman est}.
  
  \subsubsection{Near origin estimate}
  
  \begin{lemma} 
Fix $t_0 \in (-1/2, 0)$. There exist $C > 0$ independent of $\ep$ and $h$ so that for each $h \in (0, h_0]$, $\ep \in [-\ep_0, \ep_0]$, and $v \in C^\infty_0(\R^n)$,
\begin{equation} \label{obovu est}
\begin{split}
\int_{0 < r < 1/2, \theta} |r^{-\frac{1}{2} - t_0} r^{\frac{n-1}{2}} v |^2 &\le Ch^{-4} \big( \int_{0 < r < 1, \theta} |r^{\frac{3}{2} - t_0} r^{\frac{n-1}{2}} P(\ep, h) v |^2 \\
&+ \int_{\alpha < r < 1, \theta} |r^{\frac{3}{2} - t_0} V(\ep,h) r^{\frac{n-1}{2}}v |^2 \\
&+ h^4  \int_{1/2 < r < 1, \theta} |r^{\frac{3}{2} - t_0} r^{\frac{n-1}{2}} v  |^2 + h^2  \int_{1/2 < r < 1, \theta} |r^{\frac{3}{2} - t_0} h(r^{\frac{n-1}{2}} v)'   |^2 \big),
\end{split}
\end{equation}
where 
\begin{equation} \label{alpha}
\alpha \defeq \eta h,
\end{equation}
for some $\eta > 0$ independent of $h$ and $\ep$.
\end{lemma}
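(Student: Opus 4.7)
The plan is to follow the Mellin-transform strategy of Obovu \cite[Lemma 2.2]{ob24}, the modification being that the potential $V$ is kept on the right-hand side rather than absorbed into the differential operator. Setting $u(r, \theta) \defeq r^{(n-1)/2} v(r\theta)$, the identity $r^{(n-1)/2}(-\Delta) r^{-(n-1)/2} = -\partial_r^2 + r^{-2} \Lambda$ gives
\begin{equation*}
r^{(n-1)/2} P(\ep, h) v = L u + V u, \qquad L \defeq -h^2 \partial_r^2 + h^2 r^{-2} \Lambda.
\end{equation*}
On the orthonormal eigenbasis $\{Y_j\}$ of $\Lambda$, with eigenvalues $\lambda_j \ge -1/4$, the operator $L$ decouples into a family of one-dimensional operators $L_j \defeq -h^2 \partial_r^2 + h^2 \lambda_j r^{-2}$ acting on the spherical modes $u_j(r) \defeq \langle u(r, \cdot), Y_j \rangle_{L^2(\US^{n-1})}$; the potential $V$ generally couples these modes and will be handled perturbatively at the end.

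For each $L_j$ the next step is a weighted Mellin--Parseval estimate. The Mellin symbol of $r^2 L_j / h^2$ is the polynomial $p_j(\sigma) = -\sigma^2 + \sigma + \lambda_j$, whose real zeros $\sigma = 1/2 \pm \sqrt{1/4 + \lambda_j}$ coalesce to a double root at $\sigma = 1/2$ exactly when $\lambda_j = -1/4$ (the zero mode in dimension two). The condition $t_0 \in (-1/2, 0)$ places the Mellin contour at $\real \sigma = -t_0 \in (0, 1/2)$, on which $|p_j(-t_0)|$ is bounded below by a positive constant depending only on $t_0$, uniformly in $j$. A Plancherel argument on this contour, combined with a smooth cutoff $\chi$ equal to $1$ on $[0, 1/2]$ and vanishing near $r = 1$, yields a mode-by-mode estimate of the form
\begin{equation*}
\int_0^{1/2} r^{-1 - 2t_0} |u_j|^2 \, dr \le C h^{-4} \int_0^1 r^{3 - 2t_0} |L_j u_j|^2 \, dr + (\text{commutator terms})
\end{equation*}
with $C$ independent of $h, \ep, j$. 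The commutator contributions $[L_j, \chi] u_j = -h^2(2\chi' u_j' + \chi'' u_j)$ are supported in $\{1/2 < r < 1\}$ and supply exactly the last two integrals on the right of \eqref{obovu est} after summing in $j$ via Parseval on $\US^{n-1}$.

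The remaining, and only delicate, step is folding $V u$ into the right-hand side. For $r > \alpha \defeq \eta h$ one simply keeps $\int r^{3 - 2t_0} |V u|^2$ on the right of \eqref{obovu est}; the elementary bound $|L u|^2 \le 2|L u + V u|^2 + 2 |V u|^2$ then produces the first two integrals on the right of \eqref{obovu est} on this outer region. For $0 < r \le \eta h$ the centrifugal term satisfies $h^2 r^{-2} \ge \eta^{-2}$ pointwise; combined with the uniform $L^\infty$ bound on $V$ coming from \eqref{real part}--\eqref{W short range}, choosing $\eta$ small depending only on that bound and on $t_0$ allows $V u$ to be absorbed as a small perturbation of $L u$ in the Mellin estimate restricted to the inner region. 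This perturbative absorption near the origin is the main obstacle; once it is in place, concatenating the inner and outer estimates gives \eqref{obovu est}.
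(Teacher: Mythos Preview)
Your proposal is correct and follows essentially the same route as the paper: invoke the Mellin/Obovu estimate for the free radial operator with a cutoff $\chi$, pick up the commutator terms on $\{1/2<r<1\}$, then split the potential contribution at $r=\alpha=\eta h$ and absorb the inner piece. The paper's proof is shorter only because it quotes \cite[(2.12)]{ob24} as a black box rather than re-sketching the Mellin argument.

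One point deserves tightening. Your description of the inner-region absorption via the ``centrifugal term $h^2 r^{-2}\ge \eta^{-2}$'' is a potentially misleading heuristic: the actual centrifugal potential in $L_j$ is $h^2\lambda_j r^{-2}$, and $\lambda_j$ may be $0$ (lowest mode in $n=3$) or $-1/4$ ($n=2$), so there is no pointwise domination of $V$ by the centrifugal barrier uniformly in $j$. The absorption has nothing to do with $\lambda_j$; it is a pure weight comparison, exactly as the paper does: on $\{r\le \eta h\}$ one has $r^{3-2t_0}=r^{4}\cdot r^{-1-2t_0}\le (\eta h)^{4} r^{-1-2t_0}$, hence
\[
Ch^{-4}\int_{0<r<\eta h,\theta}\big|r^{\frac{3}{2}-t_0}V r^{\frac{n-1}{2}}v\big|^{2}
\;\le\; C\|V\|_{L^\infty}^{2}\,\eta^{4}\int_{0<r<\eta h,\theta}\big|r^{-\frac{1}{2}-t_0} r^{\frac{n-1}{2}}v\big|^{2},
\]
and this is absorbed into the \emph{left-hand side} of the Mellin inequality by taking $\eta$ small. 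Phrasing it as ``a small perturbation of $Lu$'' suggests absorbing into the right-hand side, which is not what happens. With that clarification your argument matches the paper's.
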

 
 \begin{proof}
 
The proof is only a small variation of the proof of \cite[Lemma 2.2]{ob24}, to allow for the potential to be complex valued, and to depend on $h$ and $\ep$. 
 
 Let $\chi \in C_0^\infty([0,\infty); \R)$ be such that $\chi = 1$ near $[0,1/2]$ and $\chi = 0$ near $[1, \infty)$. By \cite[(2.12)]{ob24}, for all $h > 0$,
 \begin{equation} \label{invoke obovu}
 \begin{split}
 \int_{r, \theta} | \chi r^{-\frac{1}{2} - t_0} r^{\frac{n-1}{2}} v |^2 & \le Ch^{-4} \big( \int_{0 < r < 1, \theta} |r^{\frac{3}{2} - t_0} r^{\frac{n-1}{2}} P(\ep, h) v |^2 \\
&+ \int_{0 < r < 1, \theta} |r^{\frac{3}{2} - t_0} V(\ep,h) r^{\frac{n-1}{2}}v |^2 \\
& + \int_{r, \theta} |r^{\frac{3}{2} - t_0} [r^{\frac{n-1}{2}} P(\ep, h) r^{-\frac{n-1}{2}}, \chi] r^{\frac{n-1}{2}} v |^2 \big).
 \end{split}
 \end{equation}
 Because the commutator reduces to
 \begin{equation*}
  [r^{\frac{n-1}{2}} P(\ep, h) r^{-\frac{n-1}{2}}, \chi] r^{\frac{n-1}{2}} v = -h^2(\chi'' r^{\frac{n-1}{2}} v + 2 \chi' (r^{\frac{n-1}{2}} v)'),
 \end{equation*}
 \eqref{invoke obovu} implies
  \begin{equation} \label{invoke obovu consequence}
 \begin{split}
 \int_{r, \theta} | \chi r^{-\frac{1}{2} - t_0} r^{\frac{n-1}{2}} v |^2 & \le Ch^{-4} \big( \int_{0 < r < 1, \theta} |r^{\frac{3}{2} - t_0} r^{\frac{n-1}{2}} P(\ep, h) v |^2 \\
&+ \int_{0 < r < 1, \theta} |r^{\frac{3}{2} - t_0} V(\ep,h) r^{\frac{n-1}{2}}v |^2 \\ 
&+ h^4  \int_{1/2 < r < 1, \theta} |r^{\frac{3}{2} - t_0} r^{\frac{n-1}{2}} v  |^2 + h^2  \int_{1/2 < r < 1, \theta} |r^{\frac{3}{2} - t_0} h(r^{\frac{n-1}{2}} v)'   |^2 \big).
 \end{split}
 \end{equation}
 Now, considering the term in line two of \eqref{invoke obovu consequence}, we decompose integration in $r$ with respect to $\alpha = \eta h$. Supposing $\eta < h^{-1}_0$ so that $\alpha < 1$,
 \begin{equation} \label{decompose using alpha}
 \begin{split}
Ch^{-4} \int_{0 < r < 1, \theta}& |r^{\frac{3}{2} - t_0} V(\ep,h) r^{\frac{n-1}{2}}v |^2 \\
&= Ch^{-4} \int_{0 < r < \alpha, \theta} |r^{\frac{3}{2} - t_0} V(\ep,h) r^{\frac{n-1}{2}}v |^2 + Ch^{-4}  \int_{\alpha < r < 1, \theta} |r^{\frac{3}{2} - t_0} V(\ep,h) r^{\frac{n-1}{2}}v |^2 \\
 &=Ch^{-4} \alpha^4 \int_{0 < r < \alpha, \theta} |r^{-\frac{1}{2} - t_0} r^{\frac{n-1}{2}}v |^2 + Ch^{-4} \int_{\alpha < r < 1, \theta} |r^{\frac{3}{2} - t_0} V(\ep,h) r^{\frac{n-1}{2}}v |^2 \\
 & =C \eta^{4} \int_{0 < r < \alpha, \theta} |r^{-\frac{1}{2} - t_0} r^{\frac{n-1}{2}}v |^2 + Ch^{-4} \int_{\alpha < r < 1, \theta} |r^{\frac{3}{2} - t_0} V(\ep,h) r^{\frac{n-1}{2}}v |^2,
 \end{split} 
 \end{equation}
 Taking $\eta$ smaller if necessary, depending on $C$ but independent of $h$ and $\ep$, we can absorb the first term in line four of \eqref{decompose using alpha} into the left side of \eqref{invoke obovu consequence}, completing the proof of \eqref{obovu est}.\\
 \end{proof}
 
 \subsubsection{Combining the near-origin and away-from-origin estimates}
 
 For $v \in C^\infty_0(\R^n)$, set $\tilde{u} = r^{\frac{n-1}{2}} v$. We have
\begin{equation} \label{begin gluing strategy}
\begin{split}
\| \langle r \rangle^{-s} v \|^2_{L^2} &=  \int_{0 < r < 1/2, \theta} |\langle r \rangle^{-s} \tilde{u}|^2 + \int_{r > 1/2, \theta} |\langle r \rangle^{-s} \tilde{u}|^2 \\
&\le C \int_{0 < r < 1/2, \theta} |r^{-\frac{1}{2} - t_0} \tilde{u}|^2+ C \int_{r > 1/2, \theta}( r \mathbf{1}_{(0,M]} +  \langle r \rangle^{-2s} \mathbf{1}_{(M, \infty)})|\tilde{u}|^2, 
\end{split}
\end{equation}
where we used $\langle r \rangle^{-2s} \le C(r \mathbf{1}_{(0,M]} +  \langle r \rangle^{-2s} \mathbf{1}_{(M, \infty)})$ for $r > 1/2$. Let us bound the first term of the second line of \eqref{begin gluing strategy} using \eqref{obovu est}. In doing so we apply
\begin{gather}
 \int_{0 < r < 1, \theta} |r^{\frac{3}{2} - t_0} r^{\frac{n-1}{2}} P(\ep,h) v |^2  \le C \int_{r , \theta} |\langle r \rangle^{s} P_\varphi(\ep, h) u |^2, \label{handle P straightforwardly} \\
 \int_{\alpha < r < 1, \theta} |r^{\frac{3}{2} - t_0} V(\ep, h) \tilde{u} |^2 \le C \int_{r , \theta} (r \mathbf{1}_{(0,M]} +  \langle r \rangle^{-2s} \mathbf{1}_{(M, \infty)}) |u|^2, \label{convert V} \\
  \int_{1/2< r < 1, \theta} |r^{\frac{3}{2} - t_0} \tilde{u} |^2 +   \int_{1/2 < r < 1, \theta} |r^{\frac{3}{2} - t_0} h\tilde{u}' |^2 \le C \int_{r, \theta} (r \mathbf{1}_{(0,M]} +  \langle r \rangle^{-2s} \mathbf{1}_{(M, \infty)})( |u |^2 + |h u '|^2), \label{est commutator terms}
 \end{gather}
where, as in \eqref{function space u}, $u = e^{\varphi/h} r^{(n-1)/2} v = e^{\varphi/h} \tilde{u}$. To get \eqref{est commutator terms}, we used, for $1/2 < r < 1$,
\begin{equation*}
| r^{\frac{1}{2} - t_0} h\tilde{u}'|^2 = |r^{\frac{1}{2} - t_0}(e^{-\frac{\varphi}{h}} hu' -  \varphi'e^{-\frac{\varphi}{h}} u)|^2 \le C(r \mathbf{1}_{(0,M]} +  \langle r \rangle^{-2s} \mathbf{1}_{(M, \infty)})( |u |^2 + |h u '|^2).
\end{equation*}
The upshot is that \eqref{begin gluing strategy} implies
\begin{equation} \label{last step in gluing}
\| \langle r \rangle^{-s} v \|^2_{L^2} \le Ch^{-4}\big( \int_{r , \theta} |\langle r \rangle^{s} P_\varphi(\ep, h) u |^2 + \int_{r , \theta}( r \mathbf{1}_{(0,M]} +  \langle r \rangle^{-2s} \mathbf{1}_{(M, \infty)})( |u |^2 + |h u '|^2) \big).
\end{equation}

The proof of \eqref{Carleman est} is then completed by using \eqref{just after integrate} (the away-from-origin estimate) to bound the second term on the right side of \eqref{last step in gluing}.\\
  \end{proof}
  
Combining \eqref{Carleman est} and \eqref{ext Carleman est},
\begin{equation} \label{add ests together}
\begin{split}
e^{-C/h}&  \| \langle x \rangle^{-s} \mathbf{1}_{\{|x| \le M\}} v \|^2_{L^2} +  \| \langle x \rangle^{-s} \mathbf{1}_{\{|x| > M\}} v \|^2_{L^2} \\
&\le \frac{C}{h^2} \| \langle x \rangle^{-s} P(\ep, h) v \|^2_{L^2}  + \frac{C}{h} \| |W_L|^{1/2} v \|^2_{L^2(\R^n)}.
\end{split}
\end{equation}

Recall that in subsection \ref{regularity and decay of potential subsection} we supposed $\pm W_L, \pm W_S \ge 0$.  Using this, we estimate the second term in the second line of \eqref{add ests together}. Our convention for the $L^2$-inner product is that complex conjugation takes place in the first argument; for all $\gamma_0, \gamma_1 > 0$,
\begin{equation} \label{use sign of imaginary part}
\begin{split}
 \| |W_L|^{1/2} v \|^2_{L^2} &= -  \imag \langle \pm iW_L v, v \rangle_{L^2}  \\
 &\le - \imag \langle   \pm i W v, v \rangle_{L^2} \\
 &= \mp  \imag \langle P(\ep, h) v, v \rangle_{L^2} \\
 & \le  \tfrac{\gamma_0^{-1}}{2}  \| \langle x \rangle^s \mathbf{1}_{\{|x| \le M\}} P(\ep, h) v \|^2_{L^2} + \tfrac{ \gamma_0}{2}  \| \langle x \rangle^{-s} \mathbf{1}_{\{|x| \le M\}} v \|^2_{L^2} \\
 &+  \tfrac{\gamma^{-1}}{2}  \| \langle x \rangle^s \mathbf{1}_{\{|x| > M\}} P(\ep, h) v \|^2_{L^2(\R^n)} + \tfrac{\gamma}{2} \| \langle x \rangle^{-s} \mathbf{1}_{\{|x| > M\}} v \|^2_{L^2}.
 \end{split}
\end{equation}
Setting $\gamma_0 = C^{-1}he^{-C/h}$ and $\gamma_1 = C^{-1} h$, we absorb the terms involving $\langle x \rangle^{-s} \mathbf{1}_{\{|x| > M\}} v$ or $\langle x \rangle^{-s} \mathbf{1}_{\{|x| \le M\}} v$ on the right side of \eqref{use sign of imaginary part} into the left side of \eqref{add ests together}. We thus have
\begin{equation} \label{Carleman est no remainder}
\begin{split}
e^{-C/h} \|\langle x \rangle^{-s}& \mathbf{1}_{\{|x| \le M\}} v \|^2_{L^2} + \|\langle x \rangle^{-s} \mathbf{1}_{\{|x| > M\}} v \|^2_{L^2} \\
 &\le e^{C/h}  \| \langle x \rangle^s \mathbf{1}_{\{|x| \le M\}} P(\ep, h) v \|^2_{L^2}+ \frac{C}{h^2}  \| \langle x \rangle^s \mathbf{1}_{\{|x| > M\}} P(\ep, h) v \|^2_{L^2}.
\end{split}
\end{equation}

\appendix

\section{Proof of \eqref{use std density}}

\label{resolv est appendix}

For $s > 0$ the operator
\begin{equation*}
[P(\ep,h), \langle x \rangle^{s}]\langle x \rangle^{-s} = \left(-h^2( \Delta \langle x \rangle^{s}) - 2h^2 (\nabla \langle x \rangle^{s}) \cdot \nabla \right) \langle x \rangle^{-s}
\end{equation*}
is bounded $H^2(\R^n) \to L^2(\R^n)$. So, for $v \in H^2(\R^n)$ such that $\langle x \rangle^{s} v \in H^2(\R^n)$,
 \begin{equation}\label{weight times v in H2}
 \begin{split}
\|\langle x \rangle^{s}P(\ep, h)v\|_{L^2}  \le \|P(\ep, h))\langle x \rangle^{s} v \|_{L^2} +  \|[P(\ep, h),\langle x \rangle^{s}]\langle x \rangle^{-s}\langle x \rangle^{s}v \|_{L^2}
 \le C \| \langle x \rangle^{s}v \|_{H^2},
\end{split} 
\end{equation}
for some $C > 0$ independent of $v$.

Given $1/2 < s< 1$ and $f \in L^2(\R^n)$, the function $ u= \langle x \rangle^{s}(P(\ep, h))^{-1}\langle x \rangle^{-s} f$ belongs to $\langle x \rangle^s H^2(\R^n)$. This follows from Lemma \ref{apply cutoff lem} below because
\begin{equation*}
P(\ep, h) u =  f + [P(h), \langle x \rangle^{s}]  \langle x \rangle^{-s}  u \in L^2(\R^n),
\end{equation*}
with  $[P(h), \langle x \rangle^{s}] $ being bounded $H^2(\R^n) \to L^2(\R^n)$ since $s < 1$. 

Now, choose a sequence $v_k \in C_{0}^\infty$ such that $ v_k \to  \langle x \rangle^{s}(P(\ep, h))^{-1}\langle x \rangle^{-s} f$ in $H^2(\R^n)$. Define $\tilde{v}_k \defeq \langle x \rangle^{-s}v_k$. Then, as $k \to \infty$,
\begin{equation*}
\begin{split}
\| \langle x \rangle^{-s} \tilde{v}_k - \langle x \rangle^{-s} (P(\ep,h))^{-1}\langle x \rangle^{-s}f \|_{L^2}  
\le \| v_k - \langle x \rangle^{s} (P(\ep,h))^{-1}\langle x \rangle^{-s}f \|_{H^2} \to 0.
\end{split}
\end{equation*}
Also, applying equation \eqref{weight times v in H2},
\begin{equation*}
\|\langle x \rangle^{s}P(\ep, h)\tilde v_k - f\|_{L^2} \le C \|v_k - \langle x \rangle^{s} (P(\ep,h))^{-1} \langle x \rangle^{-s} f \|_{H^2} \to 0.
\end{equation*} 
Thus \eqref{use std density} follows by replacing $v$ by $\tilde{v}_k$ in \eqref{prepare to use density} and sending $k \to \infty$.

\begin{lemma} \label{apply cutoff lem}
If $u \in \langle x \rangle H^2(\R^n)$ and if $f \defeq P(\ep, h)u$, defined as a distribution, belongs to $L^2(\R^n)$, then in fact $u \in H^2(\R^n)$ and $u = (P(\ep, h))^{-1} f$.
\end{lemma}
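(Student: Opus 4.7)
The plan is to compare $u$ with the candidate $\tilde u := P(\ep,h)^{-1} f \in H^2(\R^n)$, whose existence requires $P(\ep,h) \colon H^2(\R^n) \to L^2(\R^n)$ to be a bounded bijection. Since the imaginary part $W_L + W_S$ of the potential has a fixed sign with $|W_L + W_S| \ge |W_L| \ge c_1(\ep,h) > 0$ (by \eqref{W long range} and the sign convention on $W_S$), integration by parts yields
\begin{equation*}
|\imag \langle P(\ep,h) v, v\rangle_{L^2}| \ge c_1\|v\|_{L^2}^2, \qquad v \in H^2(\R^n).
\end{equation*}
This makes $P(\ep,h)$ injective with closed range; the analogous bound for its formal adjoint gives dense range, while elliptic regularity ($-h^2\Delta v = P(\ep,h)v - Vv$) keeps the domain at $H^2$. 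Hence $P(\ep,h)^{-1} \colon L^2 \to H^2$ is bounded with $\|P(\ep,h)^{-1}\|_{L^2 \to L^2} \le c_1^{-1}$.

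Setting $w := u - \tilde u$, we have $w \in \langle x \rangle H^2 \subset H^2_{\mathrm{loc}}$ and $Pw = 0$ distributionally, so the problem reduces to showing $w \equiv 0$. For this, I would use a family of cutoffs $\chi_R(x) := \chi(x/R)$, where $\chi \in C_0^\infty(\R^n;[0,1])$ equals $1$ on $B(0,1)$ and is supported in $B(0,2)$. Since $w$ is locally $H^2$, $\chi_R w \in H^2$ with compact support, and
\begin{equation*}
P(\chi_R w) = [P,\chi_R]w = -h^2\bigl((\Delta \chi_R) w + 2\nabla \chi_R \cdot \nabla w\bigr)
\end{equation*}
is supported in the annulus $\{R \le |x| \le 2R\}$. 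Applying the bound on $P^{-1}$ yields $\|\chi_R w\|_{L^2} \le c_1^{-1}\|[P,\chi_R] w\|_{L^2}$, and for any fixed $R_0 > 0$ and $R \ge R_0$ we have $B(0,R_0) \subseteq B(0,R)$, so $\|w\|_{L^2(B(0,R_0))}^2 \le \|\chi_R w\|_{L^2}^2$.

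The main obstacle is showing that the commutator error tends to zero as $R \to \infty$. Writing $w = \langle x \rangle W$ with $W \in H^2(\R^n)$, the bounds $|\nabla \chi_R| \le C/R$ and $|\Delta \chi_R| \le C/R^2$, combined with $|w| \le CR|W|$ and $|\nabla w| \le C(|W| + R|\nabla W|)$ on the annulus, give
\begin{equation*}
|[P,\chi_R]w| \le Ch^2\bigl(R^{-1}|W| + |\nabla W|\bigr)\mathbf{1}_{\{R \le |x| \le 2R\}}.
\end{equation*}
Hence
\begin{equation*}
\|[P,\chi_R]w\|_{L^2}^2 \le Ch^4 \int_{R \le |x| \le 2R} \bigl(|W|^2 + |\nabla W|^2\bigr)\,dx \longrightarrow 0 \quad \text{as } R \to \infty,
\end{equation*}
since $|W|^2 + |\nabla W|^2 \in L^1(\R^n)$. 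The crucial cancellation is that the linear growth of $w$ is exactly absorbed by the $R^{-1}$ gain from differentiating $\chi_R$; this is why the hypothesis $u \in \langle x \rangle H^2$ (and not a higher polynomial weight) is the natural one for the argument.

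Combining the previous two paragraphs, for every $R_0 > 0$ we get $\|w\|_{L^2(B(0,R_0))} = 0$ by letting $R \to \infty$. Thus $w \equiv 0$, so $u = \tilde u \in H^2(\R^n)$ and $u = P(\ep,h)^{-1} f$.
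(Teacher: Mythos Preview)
Your proof is correct and uses essentially the same cutoff-and-commutator argument as the paper: both introduce $\chi_R(x) = \chi(x/R)$, compute the commutator $[P,\chi_R]$ acting on a function in $\langle x\rangle H^2$, and use that the $R^{-1}$ gain from differentiating $\chi_R$ exactly cancels the linear growth to force the error to vanish as $R\to\infty$. The only organizational difference is that the paper applies $\chi_R$ directly to $u$ and shows $\chi_R u = P^{-1}(f + [P,\chi_R]u) \to P^{-1}f$, whereas you first set $\tilde u = P^{-1}f$ and apply the cutoff to $w = u - \tilde u$; your version also spells out why $P(\ep,h)$ is invertible, which the paper leaves implicit.
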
 

\begin{remark}
The proof shows that this lemma holds also if $P(\ep, h)$ is replaced by $-c^2 \Delta + V - \lambda^2$, for $\imag \lambda > 0$, $c$ obeying \eqref{c bd above and below}, and $V \in L^\infty(\R^n)$.  
\end{remark}

\begin{proof}
Let $\chi \in C^\infty_0(\R^n ; [0,1])$ be such that $\chi = 1$ near $B(0,1)$ with $\supp \chi \subseteq B(0,2)$. For $R > 0$, put $\chi_R(x) \defeq \chi(x/R)$. Then $\chi_R u \in H^2(\R^n)$ and 
\begin{equation*}
P(\ep, h) \chi_R u = f + [P(\ep, h), \chi_R] u = f -h^2( \Delta \chi_R) u -2 h^2 \nabla \chi_R \cdot \nabla u. 
\end{equation*}
We have $\nabla \chi_R = O(R^{-1})$ and $\Delta \chi_R = O(R^{-2})$, both of which have support in $\{R \le |x| \le 2R\}$. Therefore, because $u \in \langle x \rangle H^2(\R^n)$, in follows that $h^2( \Delta \chi_R) u +2 h^2 \nabla \chi_R \cdot \nabla u$ converges to zero in $L^2(\R^n)$ as $R \to \infty$. So in the sense of $L^2$-convergence
\begin{equation*}
u = \lim_{R \to \infty} \chi_R u = (P(\ep,h))^{-1} f. 
\end{equation*}
\end{proof}

\section{Free resolvent at low frequency}
\label{free resolv low freq appendix}

In this appendix, we deduce H\"older regularity for
\begin{equation} \label{find low freq exp for} \langle \cdot \rangle^{-s} \big( (-\Delta -\lambda^2)^{-1} +\tfrac{1}{2\pi} \log \big( \tfrac{-i\lambda|x-y|}{2} \big) \big) \langle \cdot \rangle^{-s}: L^2(\R^n)\to L^2(\R^n),  \qquad n \ge 2, \, s > 1,
\end{equation}
for $\lambda$ in compact subsets of $\imag \lambda \ge 0$. The logarithmic term in \eqref{find low freq exp for} should be omitted except when $n = 2$. We employ the notation $R_0(\lambda) \defeq (-\Delta - \lambda^2)^{-1}$.  

To begin, recall the well known formula for the integral kernel of the free resolvent \cite[(3.1)]{jene01}, 
\begin{equation} \label{resolv and Hankel}
R_0(\lambda)(|x-y|) = \frac{i}{4} \Big( \frac{\lambda}{2\pi |x-y|} \Big)^{\frac{n}{2} -1} H^{(1)}_{\frac{n}{2} -1} (\lambda |x-y|), \qquad \imag \lambda > 0, 
\end{equation}
where $H^{(1)}_\nu$ is principal branch of the Hankel function of the first kind of order $\nu$ \cite[\S 10.2(ii)]{dlmf}.


Next, we use the relationship between $H^{(1)}_\nu$ and the Macdonald function $K_\nu$ \cite[10.27.4, 10.27.5, 10.27.8]{dlmf}. Setting $\nu \defeq (n/2) - 1$,
\begin{equation*}
H^{(1)}_{\nu} (\lambda |x-y|) = H^{(1)}_{\nu} (i(-i\lambda |x-y|)) = \frac{2}{i \pi} e^{- i\pi \nu/2} K_{\nu}(-i \lambda |x-y|).
\end{equation*}

Combining this with \eqref{resolv and Hankel} yields
\begin{equation} \label{resolv and macdonald}
R_0(\lambda)(|x-y|) = \frac{1}{2\pi}\Big( \frac{-i\lambda}{2\pi|x-y|} \Big)^{\nu} K_{\nu}(-i \lambda |x-y|), \quad \nu = \frac{n}{2} -1.
\end{equation}


First we concentrate on $n = 2$, where $\nu = 0$. Put 
\begin{equation} \label{A0}
A(z) \defeq K_0(z) + \log \big( \frac{z}{2} \big),
\end{equation}
so that 
\begin{equation} \label{free resolv exp dim two}
\langle x \rangle^{-s} R_0(\lambda)(|x-y|)\langle y \rangle^{-s} = \frac{1}{2\pi} \langle x \rangle^{-s} A(-i \lambda |x-y|) \langle y \rangle^{-s} - \frac{1}{2\pi} \langle x \rangle^{-s} \log \big( \frac{-i\lambda|x-y|}{2} \big)\langle y \rangle^{-s}.
\end{equation}
From \cite[10.31.2]{dlmf}, we see that $A(z)$ tends to Euler's constant $-\gamma$ for $\real z > 0$ and $z \to 0$. Using then the recurrence relation $\partial_z K_0 = -K_1$ \cite[10.29.3]{dlmf},
\begin{equation*}
\partial_z A(z) =  -K_1(z) + \frac{1}{z},
\end{equation*}
From \cite[10.30.2]{dlmf}, we see that $-K_1(z) + (1/z)$ goes to zero for $\real z > 0$ and $z \to 0$. Furthermore, for any $\nu$ \cite[10.25.3]{dlmf},
\begin{equation} \label{macdonald asymptotic infinity}
K_\nu(z) \sim (\pi/(2z))^{1/2} e^{-z}, \qquad z \to \infty. 
\end{equation}
Thus we conclude that, in $\real z > 0$, $A(z)$ is complex differentiable, $\partial_zA$ is bounded, and for any $\epsilon > 0$ there exists $C_\epsilon > 0$ so that 
\begin{equation*}
|A(z)| \le C_\epsilon(1 + |z|^{\epsilon}).
\end{equation*}
Hence, for $\lambda$ in the upper half plane,
\begin{equation*}
\begin{gathered}
|\langle x \rangle^{-s} A(-i\lambda |x -y|) \langle y \rangle^{-s}| \le C_\epsilon \langle x \rangle^{-s} (1 + |\lambda|^\epsilon|x -y|^{\epsilon}) \langle y \rangle^{-s}, \\
| \partial_\lambda \langle x \rangle^{-s} A(-i\lambda |x -y|) \langle y \rangle^{-s}| \le \sup_{\real z > 0} (| \partial_zA(z)|) \langle x \rangle^{-s} |x -y| \langle y \rangle^{-s}.
\end{gathered} 
\end{equation*}
For $s > 2$, the kernel $\langle x \rangle^{-s} |x-y| \langle y \rangle^{-s}$ is Hilbert-Schmidt. On the other hand for $s > 1$, the kernel  $\langle x \rangle^{-s} |x-y|^{\epsilon} \langle y \rangle^{-s}$ is Hilbert-Schmidt for $\epsilon > 0$ small enough. Therefore,
\begin{equation*}
\lambda \mapsto \langle x \rangle^{-s} A(-i \lambda |x-y|) \langle y \rangle^{-s}
\end{equation*}
is continuous from $\imag \lambda \ge 0$ to the space of bounded operators $L^2(\R^2) \to L^2(\R^2)$ for $s > 1$, and continuously differentiable if $s > 2$.

For $n \ge 3$, we use $\tfrac{d}{d\lambda} \langle \cdot \rangle^{-s} (-\Delta - \lambda^2)^{-1} \langle \cdot \rangle^{-s} = 2 \lambda \langle \cdot \rangle^{-s} (-\Delta - \lambda^2)^{-2} \langle \cdot \rangle^{-s}$ and two lemmas:

\begin{lemma}[{\cite[Proposition 2.4]{gimo74}}] \label{gimo74 lemma}
Let $n \ge 3$, $s_1, \, s_2 > 1/2$, and $s_1 + s_2 > 2$. Then $\langle \cdot \rangle^{-s_1}R_0(\lambda)  \langle \cdot \rangle^{-s_2} : L^2(\R^n) \to H^2(\R^n)$ extends continuously to $\imag \lambda \ge 0$. 
\end{lemma}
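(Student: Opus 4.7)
The plan is to work from the explicit Macdonald representation \eqref{resolv and macdonald}, which reduces the analysis of $R_0(\lambda)$ to understanding $K_\nu(-i\lambda|x-y|)$ for $\nu = \tfrac{n}{2}-1 \ge \tfrac{1}{2}$. Since $K_\nu$ is analytic on $\{\real z > 0\}$, decays exponentially as $|z|\to\infty$ by \eqref{macdonald asymptotic infinity}, and has the leading singularity $K_\nu(z) \sim \tfrac{\Gamma(\nu)}{2}(z/2)^{-\nu}$ as $z \to 0$ (with logarithmic corrections if $\nu \in \NN$), the kernel of $R_0(\lambda)$ is a controlled perturbation of a constant multiple of $|x-y|^{-(n-2)}$ whose small-$\lambda$ behavior can be read off directly.

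First, I would establish continuity for $\lambda$ bounded away from zero in the closed upper half plane. This is the classical limiting absorption principle of Agmon--Kato--Kuroda: the boundary value $R_0(\lambda \pm i0)$ factors through the Fourier transform, a trace onto the sphere $\{|\xi|=|\lambda|\}$, and an inverse Fourier transform, each of which is continuous between the appropriate weighted $L^2$ spaces provided $s_1, s_2 > 1/2$. To upgrade from $L^2 \to L^2$ to $L^2 \to H^2$, use $-\Delta R_0(\lambda) = I + \lambda^2 R_0(\lambda)$ together with elliptic regularity, commuting derivatives through the weight $\langle x \rangle^{-s_1}$ at the cost of lower order terms that are absorbed by the same type of estimate.

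Second, and more delicately, I would address continuity at $\lambda = 0$. Substituting the leading singularity of $K_\nu$ into \eqref{resolv and macdonald}, the prefactor $\lambda^\nu$ cancels $(\lambda|x-y|)^{-\nu}$ and yields the Newton potential $R_0(0)(x,y) = c_n|x-y|^{-(n-2)}$. To verify that $\langle x \rangle^{-s_1} R_0(0) \langle x \rangle^{-s_2}: L^2(\R^n) \to L^2(\R^n)$ is bounded under $s_i > 1/2$ and $s_1 + s_2 > 2$, I would split into cases: for $n=3$ a direct computation shows the weighted kernel $\langle x \rangle^{-s_1}|x-y|^{-1}\langle y \rangle^{-s_2}$ is Hilbert--Schmidt; for $n \ge 4$ the Hardy--Littlewood--Sobolev inequality for $(-\Delta)^{-1}: L^{2n/(n+2)}(\R^n) \to L^{2n/(n-2)}(\R^n)$, combined with H\"older's inequality applied to the weights, yields the same $L^2 \to L^2$ bound. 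Elliptic regularity then promotes $L^2 \to L^2$ boundedness to $L^2 \to H^2$ as in the previous step.

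Finally, to close the argument, I would show $\langle x \rangle^{-s_1}(R_0(\lambda) - R_0(0))\langle x \rangle^{-s_2} \to 0$ in the operator norm $L^2 \to H^2$ as $\lambda \to 0$ in the upper half plane. Inserting the full expansion of $K_\nu$ near zero, the difference kernel becomes a finite sum of positive powers of $\lambda$ times kernels dominated by $|x-y|^{-(n-2)+\alpha}$ for some $\alpha > 0$, plus a tail whose exponential decay in $\lambda|x-y|$ controls its $L^2\to L^2$ norm by $|\lambda|^\mu$ for some $\mu > 0$. The main obstacle I foresee is bookkeeping the logarithmic corrections to $K_\nu$ that appear when $\nu$ is an integer (even $n \ge 4$), which require a separate expansion, though they do not spoil any of the operator bounds obtained above.
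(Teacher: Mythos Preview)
The paper does not itself prove this lemma; it is quoted from Ginibre--Moulin \cite{gimo74}, and the Remark immediately following records only that their argument ``uses the Fourier transform to reduce the study of \eqref{resolv and macdonald} to the case $n=3$.'' Your route is therefore genuinely different: rather than collapsing every dimension to three on the Fourier side, you propose dimension-by-dimension kernel analysis of $K_\nu$. Parts~1 and~2 are sound (though the specific HLS exponent pair $L^{2n/(n+2)}\to L^{2n/(n-2)}$ you name forces $s_1,s_2>1$; you need to interpolate the HLS pair to reach the full range $s_1,s_2>1/2$, $s_1+s_2>2$).

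Part~3 has a real gap. The ``tail whose exponential decay in $\lambda|x-y|$'' does not exist on the real axis: for $\imag\lambda=0$, \eqref{macdonald asymptotic infinity} gives only $|K_\nu(-i\lambda r)|\sim C|\lambda r|^{-1/2}$, so your finite expansion controls the difference kernel only where $|\lambda||x-y|\lesssim1$. The natural fix---split at $|\lambda||x-y|=1$ and use the cutoff to extract a power of $|\lambda|$---works in $n=3$, where the resulting kernel $|\lambda|^\theta|x-y|^{-1+\theta}$ is Hilbert--Schmidt after weighting for $0<\theta<s_1+s_2-2$. But for $n\ge4$ the best pointwise bound on the far piece is $|R_0(\lambda)(r)|\lesssim|\lambda|^{(n-3)/2}r^{-(n-1)/2}$, and the weighted operator with kernel $r^{-(n-1)/2}$ is bounded via HLS only when $s_1+s_2>(n+1)/2$, strictly stronger than the hypothesis once $n\ge4$. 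Extracting the missing decay requires the oscillation $e^{i\lambda r}$ rather than its modulus, which is precisely what the Fourier-side reduction to $n=3$ in \cite{gimo74} accomplishes; without that step (or an equivalent oscillatory estimate) your kernel argument does not close under the stated hypotheses for $n\ge4$.
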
 

\begin{remark}
By Corollary \eqref{cont ext cor} and Remark \eqref{reduce three halves}, if $s > 1/2$,  $\langle \cdot \rangle^{-s}R_0(\lambda)  \langle \cdot \rangle^{-s} : L^2(\R^n) \to L^2(\R^n)$ has a continuous extension to $(-\infty, \lambda_0] \cup [\lambda_0, \infty)$ for any $\lambda_0 > 0$. The additional restriction on the weights is necessary so that the extension may be taken to all of the $\R$. The proof of Lemma \eqref{gimo74 lemma} in \cite{gimo74} uses the Fourier transform to reduce the study of \eqref{resolv and macdonald} to the case $n = 3$. 
\end{remark}

\begin{lemma}[{\cite[Lemma 3.2]{llst24}}] \label{lap free resolv square lem}
Let $n \ge 3$ and 
\begin{equation} \label{s restrictions}
s > \begin{cases}
\frac{n + 3}{4}  & n \neq 8, \\
3 & n = 8. 
\end{cases}
\end{equation}
There exists $C > 0$ such that for all $\lambda \in \C$ with $\imag \lambda > 0$, 
\begin{equation} \label{lap free resolv square}
\| \lambda \langle x \rangle^{-s} (-\Delta - \lambda^2)^{-2} \langle x \rangle^{-s}\|_{L^2(\R^n) \to L^2(\R^n)} \le C(1 + |\lambda|)^{-1} 
\end{equation}
\end{lemma}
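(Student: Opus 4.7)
My plan is to exploit the identity
\[
\lambda (-\Delta-\lambda^2)^{-2} = \tfrac{1}{2}\partial_\lambda R_0(\lambda),
\]
which reduces the weighted bound to a study of the $\lambda$-derivative of the free resolvent kernel. Starting from \eqref{resolv and macdonald} and using the Bessel identity $\tfrac{d}{dz}(z^\nu K_\nu(z)) = -z^\nu K_{\nu-1}(z)$ (a consequence of \cite[10.29.4]{dlmf}), one computes
\[
\partial_\lambda R_0(\lambda)(|x-y|) = \tilde{c}_n\, (-i\lambda)^\nu\, |x-y|^{1-\nu}\, K_{\nu-1}(-i\lambda|x-y|),
\]
with $\nu = n/2 - 1$. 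Structurally, this is (up to a constant) the kernel of the free resolvent in dimension $n-2$, viewed now as an operator on $L^2(\R^n)$.

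I would then split the weighted kernel estimate into a near region $\{|\lambda||x-y|\le 1\}$ and a far region $\{|\lambda||x-y|>1\}$. On the near region, the small-argument asymptotic $K_{\nu-1}(z) \sim C_\nu z^{-(\nu-1)}$ for $\nu > 1$ (with logarithmic or $z^{-1/2}$ modifications for $n=3,4$) yields the pointwise bound $|\partial_\lambda R_0(x,y)| \lesssim |\lambda|\, |x-y|^{4-n}$. On the far region, the large-argument asymptotic \eqref{macdonald asymptotic infinity} gives $|\partial_\lambda R_0(x,y)| \lesssim |\lambda|^{(n-3)/2}\, |x-y|^{-(n-1)/2}$, carrying the oscillation $e^{i\lambda|x-y|}$ for real $\lambda$ and exponential decay when $\imag\lambda > 0$. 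For $|\lambda|\le 1$ the two regions together cover $\R^{2n}$, and a Schur test on the weighted operator, using $\langle x\rangle^{-s}$ to tame the large-scale behavior of the far-region amplitude, yields a uniform bound; the condition $s > (n+3)/4$ is exactly what makes the required Schur integral finite. For $|\lambda|>1$ the improvement to $|\lambda|^{-1}$ comes from the oscillatory factor in the far region, either through integration by parts in the Schur integral against $e^{i\lambda|x-y|}$ or by importing the dimension-$(n-2)$ limiting absorption principle via the structural correspondence above, absorbing one power of $|\lambda|$ from the amplitude.

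The main obstacle is the anomalous case $n = 8$, where $\nu - 1 = 2$ produces a near-field singularity $|x-y|^{-4}$ in $\R^8 \times \R^8$ that just fails to be square-integrable near the diagonal. The naive Hilbert--Schmidt or Schur bound at the threshold weight $s = 11/4$ therefore diverges logarithmically, and the strengthened requirement $s > 3$ supplies enough additional decay to restore integrability, at the cost of a slightly worse weight than the generic $(n+3)/4$. Matching the two region estimates across the dyadic threshold $|\lambda||x-y|\sim 1$ requires some bookkeeping but does not produce any further weight loss beyond those stated.
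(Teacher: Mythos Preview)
Your approach matches the paper's own sketch (see the Remark immediately following the lemma): differentiate \eqref{resolv and macdonald} with respect to $\lambda$ and then check, via Hilbert--Schmidt or Schur bounds, for which $s$ the resulting weighted kernel is bounded. The identification $\partial_\lambda R_0 \sim (-i\lambda)^\nu |x-y|^{1-\nu} K_{\nu-1}(-i\lambda|x-y|)$ and the near/far splitting are correct, and the near-field pointwise bound $|\lambda|\,|x-y|^{4-n}$ is right for $n\ge 5$.

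There is, however, a genuine gap in your account of the $n=8$ anomaly. You say the near-field singularity $|x-y|^{-4}$ ``just fails to be square-integrable near the diagonal'' and that ``the strengthened requirement $s>3$ supplies enough additional decay to restore integrability.'' The first clause is true for the Hilbert--Schmidt test: $\int_{|z|<1}|z|^{8-2n}\,dz$ in $\R^n$ diverges (logarithmically) precisely at $n=8$ and fails outright for $n>8$. But this divergence is \emph{local}, at the diagonal; no amount of extra decay in the weights $\langle x\rangle^{-s}$ at infinity can repair it. For the Schur test, on the other hand, $\int_{|z|<1}|z|^{4-n}\,dz$ converges in every dimension, so there is no near-field obstruction at all. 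Thus your proposed mechanism---more weight to restore near-field integrability---cannot be what drives the jump to $s>3$. The actual reason must involve the interplay between the Hilbert--Schmidt estimate (which handles the far field with the sharp threshold $s>(n+3)/4$ but breaks down near the diagonal for $n\ge 8$) and a separate Schur-type estimate (which handles the diagonal but imposes its own, generally weaker, far-field condition); at $n=8$ neither route alone delivers $(n+3)/4$, and the compromise lands at $s>3$. You should revisit the bookkeeping here rather than attribute the fix to weight decay.

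A smaller point: your mechanism for the $(1+|\lambda|)^{-1}$ decay at high frequency is vague. ``Integration by parts in the Schur integral against $e^{i\lambda|x-y|}$'' is not a standard operation, and the structural resemblance to the $(n-2)$-dimensional resolvent does not by itself transfer a limiting-absorption bound, since the kernel still acts on $L^2(\R^n)$. The high-frequency bound is in fact the easier part and follows from the standard limiting absorption principle for $\partial_\lambda R_0$ with $s>3/2$; the dimension-dependent threshold $s>(n+3)/4$ is driven entirely by the low-frequency analysis.
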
 

\begin{remark}
The proof of Lemma \ref{lap free resolv square lem} in \cite{llst24} involves differentiating \eqref{resolv and macdonald} and checking for which $s$ the resulting weighted kernel is Hilbert-Schimdt or satisfies the hypotheses of the Schur test \cite[Section A.5]{dz}.
\end{remark}

We now establish H\"older continuity of \eqref{find low freq exp for} in compact subsets of $\imag \lambda \ge 0$ (recall that the logarithmic term in \eqref{find low freq exp for} is omitted except when $n = 2$). For this, fix $s > 1$, $A > 0$, and take $s_0, \, s_1$, such that $1 < s_0 < s < s_1$ and 
\begin{equation*}
s_1 > \begin{cases}
2 & n = 2, \\
\frac{n + 3}{4}  & n \ge 3, \, n \neq 8, \\
3 & n = 8. 
\end{cases}
\end{equation*}
Our work above when $n = 2$, as well as Lemmas \ref{gimo74 lemma} and \ref{lap free resolv square lem}, shows that there exist $C_j > 0$, $j \in \{0, 1\}$, so that for all $\lambda_1, \lambda_2$ with $\imag \lambda_1, \, \imag \lambda_2 > 0$ and $| \lambda_1|, |\lambda_2| \le A$,
\begin{equation*}
\begin{split}
\| \langle \cdot \rangle^{-s_j} \Big( R_0(\lambda_2) - R_0(\lambda_1) &+\tfrac{1}{2\pi} (\log \big( \tfrac{-i\lambda_2|x-y|}{2} \big) -\log \big( \tfrac{-i\lambda_1|x-y|}{2} \big) \big) \Big) \langle \cdot \rangle^{-s_j} \|_{L^2(\R^n) \to L^2(\R^n)} \\
& \le C_j |\lambda_2 - \lambda_1|^j
\end{split} 
\end{equation*} 

Now, with $\lambda_1$ and $\lambda_2$ fixed, consider the mapping
\begin{equation*}
\sigma \mapsto \langle \cdot \rangle^{-\sigma} \Big( R_0(\lambda_2) - R_0(\lambda_1) +\tfrac{1}{2\pi} (\log \big( \tfrac{-i\lambda_2|x-y|}{2} \big) -\log \big( \tfrac{-i\lambda_1|x-y|}{2} \big) \big) \Big) \langle \cdot \rangle^{-\sigma}
\end{equation*} 
which is holomorphic from $s_0 < \real \sigma < s_1$ to the space of bounded operators $L^2(\R^n) \to L^2(\R^n)$. Using the above bounds on the operator norm on the strips $\real \sigma = s_0$ and $\real \sigma = s_1$, the three lines lemma gives

\begin{equation*}
\begin{split}
\| \langle \cdot \rangle^{-s} \Big( R_0(\lambda_2) - R_0(\lambda_1) &+\tfrac{1}{2\pi} (\log \big( \tfrac{-i\lambda_2|x-y|}{2} \big) -\log \big( \tfrac{-i\lambda_1|x-y|}{2} \big) \big) \Big) \langle \cdot \rangle^{-s} \|_{L^2(\R^n) \to L^2(\R^n)} \\
& \le C^{1 -t}_0C^{t}_1 |\lambda_2 - \lambda_1|^t,
\end{split} 
\end{equation*}
where $t \in (0,1)$ is such that $(1 - t)s_0 + ts_1 = s$.

To upgrade to H\"older continuity $L^2(\R^2) \to H^2(\R^2)$, use Lemma \ref{simple Sobolev est lem} below, in combination with the identities
\begin{equation*}
\begin{gathered}
\langle \cdot \rangle^{-s} \Delta (-c^2\Delta - \lambda^2)^{-1} \langle \cdot \rangle^{-s} = -c^{-2}\langle \cdot \rangle^{-2s} - c^{-2} \lambda^2 \langle \cdot \rangle^{-s} (-c^2\Delta - \lambda^2)^{-1} \langle \cdot \rangle^{-s}, \\
 \Delta_{_x} \int_{\R^2} \log(-i\lambda|x-y|) \langle y \rangle^{-s} c^{-2}(y) f(y) dy = \langle x \rangle^{-s} c^{-2}(x) f(x) , \qquad x \in \R^2.
 \end{gathered}
\end{equation*}

\section{Resolvent with potential at low frequency}

\label{perturb resolv low freq appendix}

The proofs in this subsection are based on \cite[Section 2 and Appendix]{vo04} and \cite[Section 3]{llst24}. We consider $n \ge 3$ and $V \in L^\infty(\R^n; [0, \infty))$. We suppose there exist $C, \rho$ such that $|V(x)| \le C \langle x \rangle^{-\rho}$ where
\begin{equation} \label{srp decay appendix}
\rho > \begin{cases}
7/2 & \text{if } n = 3, \\
5 & \text{if } n = 4, \\
\max(3, n/2) & \text{if } n \ge 5.
\end{cases}
\end{equation}
When $n = 4$ we suppose in addition that the distributional derivatives $\partial_{x_j} V$ of $V$, $1 \le j \le 4$, belong to $L^\infty(\R^4)$. Our goal is to show that for any $s > 1$, the mapping
\begin{equation*}
\lambda \to   \langle x \rangle^{-s} (- \Delta + V - \lambda^2)^{-1} \langle x \rangle^{-s},\end{equation*}
with values in the space of bounded operators $L^2(\R^n) \to H^2(\R^n)$, is H\"older continuous for $\lambda$ in compact subsets of $\imag \lambda \ge 0$. Clearly, it suffices to the show this for $0 < s-1 \ll 1$. 

Our starting point is the resolvent identity 
\begin{equation} \label{resolv id for deriv}
(-\Delta + V - \lambda^2)^{-1} \langle x \rangle^{-s} (I + K(\lambda)) =  R_0(\lambda)\langle x \rangle^{-s}, 
\end{equation}
where $K(\lambda) \defeq V(x) \langle x \rangle^{s + s'} \langle x \rangle^{-s'} R_0(\lambda) \langle x \rangle^{-s}$, and as before we use $R_0(\lambda) \defeq (-\Delta - \lambda^2)^{-1}$. Here, $1/2 < s'  < s$ so that $s' + s > 2$. Then, by Lemma \ref{gimo74 lemma}, $R_{0,s',s}(\lambda) \defeq \langle x \rangle^{-s'} R_0(\lambda)\langle x \rangle^{-s} : L^2(\R^n) \to L^2(\R^n)$ extends continuously from $\imag \lambda > 0$ to $\R$.

The desired H\"older continuity follows if we show $I + K(\lambda)$ is invertible $L^2(\R^n) \to L^2(\R^n)$ for $\imag \lambda \ge 0$. For then
\begin{equation} \label{resolv id with K}
\langle x \rangle^{-s} (-\Delta + V - \lambda^2)^{-1} \langle x \rangle^{-s}  = \langle x \rangle^{-s}  R_0(\lambda)\langle x \rangle^{-s} (I + K(\lambda))^{-1}.
\end{equation}
By the previous appendix, \eqref{resolv id with K} exhibits $\langle x \rangle^{-s} (-\Delta + V - \lambda^2)^{-1} \langle x \rangle^{-s}$ as a product of two H\"older continuous mappings, since $(I + K(\lambda_2))^{-1} - (I + K(\lambda_1))^{-1} =  (I + K(\lambda_1))^{-1} (K(\lambda_1) - K(\lambda_2))(I + K(\lambda_2))^{-1}$.

It holds that $K(\lambda)$ is a compact $L^2(\R^n) \to L^2(\R^n)$, on account of \cite[Theorem B.4]{dz}. Hence, by the Fredholm alternative, $I + K(\lambda)$ is invertible if we can show $(I + K(\lambda))g = 0$ implies $g = 0$. To this end, put $u \defeq \langle x \rangle^{s'} R_{0, s',s}(\lambda)g$, which belongs to $\langle x \rangle^{s'} H^2(\R^n)$. If we can show $u = 0$, then in fact $g = 0$. This is because $(-\Delta - \lambda^2) u = \langle x \rangle^{-s} g$ in the distributional sense.

First, suppose $\lambda^2 \in \C \setminus [0, \infty)$. Then $u = 0$ follows immediately from $(-\Delta + V - \lambda^2)u  = \langle x \rangle^{-s}g + V R_0(\lambda) \langle x \rangle^{-s} g = \langle x \rangle^{-s}(1 + K(\lambda))g = 0.$ If $\lambda^2 \in (0, \infty)$, the idea is the same, but we incorporate a limiting step that uses \eqref{lap G} (which applies in this case since  \eqref{srp decay appendix} implies \eqref{srp V decay}). Set $u_{\ep} = (-\Delta - (\lambda + i\ep)^2)^{-1} \langle x \rangle^{-s}g$. Then $\langle x \rangle^{-s'} u_{\ep}$ converges to $\langle x \rangle^{-s'}u$ in $H^2(\R^n)$ as $\ep \to 0^+$. Moreover,
\begin{equation*}
\begin{split}
u_{\ep} &= (-\Delta + V - (\lambda + i\ep)^2)^{-1} (-\Delta + V - (\lambda + i\ep)^2) (-\Delta - (\lambda + i \ep)^2)^{-1} \langle x \rangle^{-s}g \\
&= (-\Delta + V - (\lambda + i\ep)^2)^{-1} \langle x \rangle^{-s} (I + V \langle x \rangle^{s}(-\Delta - (\lambda + i \ep)^2)^{-1} \langle x \rangle^{-s}) g
\end{split}
\end{equation*} 
Therefore, by \eqref{lap G}, for some $C > 0$ independent of $\ep$,
\begin{equation} \label{u ep}
\begin{split}
\| \langle x \rangle^{-s'}u \|_{L^2} &= \lim_{\ep \to 0^+} \| \langle x \rangle^{-s'} u_{\ep} \|_{L^2} \\
&\le C \lim_{\ep \to 0^+} \| (I + V \langle x \rangle^{s}(-\Delta - \lambda^2 \pm i \ep)^{-1} \langle x \rangle^{-s}) g \|_{L^2} \\
&= \| (I + K^\pm(\lambda)) g \|_{L^2} = 0.
\end{split} 
\end{equation}    

It remains to obtain invertibility of $I + K(\lambda)$ when $\lambda = 0$. For $n \ge 5$, it was shown in \cite[Section 5]{lsv25}, that if $V \in L^\infty(\R^n ; [0, \infty))$ and $V = O(\langle x \rangle^{-\rho})$ with $\rho > \max(3, n/2)$, there exist $C > 0$, $0 < \kappa \ll 1$ such that for $\imag \lambda > 0$ and $|\lambda| < \kappa$,
\begin{equation} \label{no zero resonance condition}
\| \langle x \rangle^{-s} (- \Delta + V - \lambda^2)^{-1} \langle x \rangle^{-s}\|_{L^2 \to L^2} \le C.
\end{equation}
Then an estimate similar to \eqref{u ep} establishes $u = 0$.

It remains to investigate the $\lambda = 0$ case when $n = 3$ or $n = 4$. We tackle these more directly. Indeed, 
\begin{equation} \label{use eqn lambda equals zero}
-\Delta u + Vu = (I + K(0))\langle x \rangle^{-s}g = 0,
\end{equation}
whence $u(x) = c_n \int_{\R^n} |x -y|^{-n+2} V(y) u(y) dy$, where $c_n|x|^{-n+2}$ is the fundamental solution of the Laplacian (for appropriate $c_n \in \R$ depending on $n$).

Any function in $H^2(\R^3)$ has a continuous, bounded representative. This is also true for members of the Sobolev space $H^3(\R^4)$. Thus, our earlier representation $u = \langle x \rangle^{s'}  R_{0, s', s}(0) g \in \langle x \rangle^{s'} H^2(\R^n)$ shows $\langle x \rangle^{-s'}u$ is bounded when $n = 3$. Our extra condition when $n = 4$, that the first distributional derivatives of $V$ belong to $L^\infty(\R^4)$, implies $\langle x \rangle^{-s'}u$ is bounded  in that case too. This follows by differentiating \eqref{use eqn lambda equals zero} in the sense of distributions
\begin{equation*}
\begin{split}
\Delta (\partial^\ell_{x_j} (\langle x \rangle^{-s'} u)) &= (\partial^\ell_{x_j}   \Delta(\langle x \rangle^{-s'} u) \\
& = \partial^\ell_{x_j} \big( ( \Delta \langle x \rangle^{-s'} ) u + 2 (\nabla \langle x \rangle^{-s'}) \cdot \nabla u + \langle x \rangle^{-s'} Vu \big), \quad 0 \le \ell \le 1,\, 1 \le j \le n.
\end{split}
\end{equation*}

 Because $u \in H^2_{\text{loc}}(\R^n)$ and $V \ge 0$, by Green's formula, for any $r > 0$, 
\begin{equation} \label{green's formula}
\| \nabla u \|^2_{L^2(B(0,r))}  \le \| \nabla u \|^2_{L^2(B(0,r))} + \langle Vu , u\rangle_{L^2(B(0,r))} = - \real \langle u, \partial_r u \rangle_{L^2(\partial B(0,r))}.
\end{equation}
We show there exist $C, \delta > 0$ so that
\begin{equation} \label{bd u partial r u}
\begin{gathered}
|u(x)| \le C \langle x \rangle^{-\frac{n-1}{2}}, \\
|\partial_r u(x)| \le C \langle x \rangle^{-\frac{n-1}{2} -\delta}.
\end{gathered}
\end{equation}
Since the $n - 1$ dimensional volume of $\partial B(0,r)$ is $O(r^{n-1})$, \eqref{bd u partial r u} implies the right side of \eqref{green's formula} tends to zero as $r \to 0$. Therefore $u$ must be a constant, and by \eqref{bd u partial r u} that constant must be zero.

We have 
\begin{equation*}
\begin{split}
|u(x)| &\le  \int_{|x - y| \ge 1, \, |x| > 2y} |x -y|^{-n + 2} |V(y)u(y)| dy\\
&+ \int_{|x - y| \ge 1, \, |x| \le 2y} |x -y|^{-n+2} |V(y)u(y)| dy \\
&+ \int_{|x - y| < 1} |x -y|^{-n + 2} |V(y)u(y)| dy \qefed I_1(u)(x) + I_2(u)(x) + I_3(u)(x).
\end{split}
\end{equation*}
For $|x| \gg 1$ and $|x -y| < 1$, $|y| \ge ||x| - |x-y|| \ge |x|/2$. So by \eqref{srp decay appendix} and $s- 1 \ll 1$,
\begin{equation*}
| \langle y \rangle^{s} V(y) | = O( \langle y \rangle^{s  - \rho}) =  O( \langle x \rangle^{-\frac{n-1}{2}}). 
\end{equation*}
Thus
\begin{equation*}
I_3 (u) = O(\langle x \rangle^{-\frac{n-1}{2}}) \int_{|x - y| < 1} |x -y|^{-n+ 2} |\langle y \rangle^{-s} u(y)| dy.
\end{equation*}
As discussed above $\langle \cdot \rangle^{-s} u$ is bounded, hence
\begin{equation} \label{use Linfty}
\int_{\{y \in \R^n : |x - y| < 1\}} |x -y|^{-n+2} |\langle y \rangle^{-s} u(y)| dy \le  \|\langle \cdot \rangle^{-s} u\|_{L^\infty(\R^n)} \int_{\{y \in \R^n : |y| < 1\}}  |y|^{-n+2} dz<\infty.  
\end{equation}
Next, if $|x - y| \ge 1$ and $|x| > 2|y|$, then $|x-y| \ge |x| - |y| \ge |x|/2$. By $s - 1 \ll 1$ and \eqref{srp decay appendix}, $\langle \cdot \rangle^{s} V = O(\langle \cdot \rangle^{s-\rho})$ belongs to $L^2(\R^n)$. Thus
\begin{equation*}
I_1(u) = O(\langle x \rangle^{-n + 2} ) \int_{|x - y| \ge 1} |Vu| dy = O(\langle x \rangle^{-\frac{n -1}{2}} )\| \langle \cdot \rangle^{s} V \|_{L^2} \| \langle \cdot \rangle^{-s} u \|_{L^2}. 
\end{equation*}
Finally, if $|x - y| \ge 1$ and $|x| \le 2|y|$, then $|V(y)| = O( \langle x \rangle^{-\frac{n-1}{2} - (s -1)} \langle y \rangle^{\frac{n-1}{2} + (s-1) - \rho})$. Since $s - 1 \ll 1$ and \eqref{srp decay appendix} imply $\langle \cdot \rangle^{2s + \frac{n-1}{2} -1 - \rho} \in L^2(\R^n)$ when $n =3$ or $4$, 
\begin{equation*}
\begin{split}
I_2(u) &= O(\langle x \rangle^{-\frac{n-1}{2}- (s-1)})  \int_{|x - y| \ge 1} |   \langle y \rangle^{1 - \rho}u| dy\\ &= O(\langle x \rangle^{-\frac{n-1}{2} - (s-1)})  \| \langle \cdot \rangle^{2s + \frac{n-1}{2} - 1- \rho} \|_{L^2} \| \langle \cdot \rangle^{-s} u \|_{L^2}. 
\end{split}
\end{equation*}

To get the bound on $\partial_r u$ in \eqref{bd u partial r u}, we proceed in a similar manner but with some minor modifications. Since, 
\begin{equation*}
\partial_{x_j} |x - y|^{-n + 2} = (-n + 2) |x -y|^{-n + 1} \frac{x_j - y_j}{|x-y|},
\end{equation*}
we have
\begin{equation*}
\begin{split}
|\partial_ru(x)| &\le  \int_{|x - y| \ge 1, \, |x| > 2y} |x -y|^{-n+1} |V(y)u(y)| dy\\
&+ \int_{|x - y| \ge 1, \, |x| \le 2y} |x -y|^{-n + 1} |V(y)u(y)| dy \\
&+ \int_{|x - y| < 1} |x -y|^{-n +1} |V(y)u(y)| dy \qefed I_1(\partial_ru)(x) + I_2(\partial_ru)(x)  + I_3(\partial_ru)(x).
\end{split}
\end{equation*}
Since $\int_{|y| < 1} |y|^{-n+1}dy < \infty$, to bound $I_3(\partial_ru)$ we again give an estimate alongs the lines \eqref{use Linfty}, finding $I_3(\partial_r u) = O(\langle x \rangle^{s - \rho}) = O(\langle x \rangle^{-\frac{n -1}{2} - \delta})$ for some $\delta > 0$. The estimates of $I_1(\partial_r u)$ and $I_2(\partial_ru)$ also follow those of $I_1(u)$ and $I_2(u)$, respectively, and yield $I_1(\partial_r u) = O(\langle x\rangle ^{-n +1})$ and $I_2(\partial_r u) = O(\langle x \rangle^{-\frac{n-1}{2} - (s-1)})$, both of which are $O(\langle x \rangle^{-\frac{n -1}{2} - \delta})$ for some $\delta > 0$.

\section{Useful estimates} \label{inequalities appendix}

\begin{lemma} \label{hardy lemma}
Let $n \ge 3$. Then,
\begin{equation} \label{hardy n ge 3}
\| r^{-1} u \|^2_{L^2} \le \Big( \frac{2}{n -2} \Big)^2 \| \nabla u \|^2_{L^2}, \qquad u \in H^1(\R^n). 
\end{equation} 
In dimension two,
\begin{equation} \label{hardy n ge 2}
\|r^{-1/2} u \|^2_{L^2} \le \| u\|^2_{L^2} + \| \nabla u \|^2_{L^2}, \qquad u \in H^1(\R^2). 
\end{equation}
\end{lemma}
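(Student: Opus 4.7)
The plan is to establish both bounds by integrating by parts against a radial vector field of the form $xr^{-\alpha}$, then pass from $C^\infty_0(\R^n)$ to $H^1(\R^n)$ by density. The key preliminary computation is that for $0 < \alpha < n$, the vector field $xr^{-\alpha}$ lies in $L^1_{\mathrm{loc}}(\R^n)$, and a cutoff argument on $\{|x| > \e\}$ followed by the divergence theorem yields
\begin{equation*}
\nabla \cdot (xr^{-\alpha}) = (n - \alpha) r^{-\alpha}
\end{equation*}
as distributions on all of $\R^n$: the boundary contribution on $\{|x| = \e\}$ scales like $\e^{n - \alpha} \to 0$, ruling out any delta-function component at the origin.

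For $n \ge 3$, I would take $\alpha = 2$ and compute, for $u \in C^\infty_0(\R^n)$,
\begin{equation*}
(n-2) \| r^{-1} u \|^2_{L^2} = \int_{\R^n} |u|^2 \nabla \cdot (xr^{-2}) \, dx = -2 \real \int_{\R^n} (xr^{-2}) \cdot \overline{u} \nabla u \, dx.
\end{equation*}
Since $|xr^{-2}| = r^{-1}$, Cauchy-Schwarz bounds the right-hand side by $2 \| r^{-1} u \|_{L^2} \| \nabla u \|_{L^2}$, and dividing through yields \eqref{hardy n ge 3} for $u \in C^\infty_0(\R^n)$. For $n = 2$, I would take $\alpha = 1$ and proceed identically: for $u \in C^\infty_0(\R^2)$,
\begin{equation*}
\| r^{-1/2} u \|^2_{L^2} = \int_{\R^2} |u|^2 \nabla \cdot (x/r) \, dx = -2 \real \int_{\R^2} (x/r) \cdot \overline{u} \nabla u \, dx \le 2 \| u \|_{L^2} \| \nabla u \|_{L^2},
\end{equation*}
from which \eqref{hardy n ge 2} follows by the elementary inequality $2ab \le a^2 + b^2$.

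To extend the inequalities to $u \in H^1(\R^n)$, one chooses $u_k \in C^\infty_0(\R^n)$ converging to $u$ in $H^1$ and, along a subsequence, almost everywhere. Fatou's lemma applied to $|r^{-1} u_k|^2$ (respectively $|r^{-1/2} u_k|^2$), combined with the uniform bound already proved for each $u_k$, shows that the relevant weighted function belongs to $L^2$ and satisfies the stated inequality. No serious obstacle is expected: the argument is a standard application of the Hardy-type divergence trick, and the only delicate point is the weight choice in dimension two, which cannot be improved to $r^{-1}$ because of the well-known failure of the sharp Hardy inequality on $\R^2$; the weight $r^{-1/2}$ together with the term $\|u\|^2_{L^2}$ on the right-hand side is what restores validity.
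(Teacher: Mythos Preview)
Your proof is correct and takes essentially the same approach as the paper: for $n\ge 3$ the paper simply cites Faris for what is precisely your divergence-theorem computation with $xr^{-2}$, and for $n=2$ the paper's polar-coordinate integration by parts (writing $1 = r'$ and integrating $\int_0^\infty |u|^2\,dr$ by parts) is the radial-coordinate expression of your argument with the vector field $x/r$.
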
 

\begin{proof}
Both inequalities are standard. The estimate for $n \ge 3$ appears in the proof of \cite[Proposition 6]{fa67}. We are not aware of an accessible reference for the dimension two case, so we include a short proof here for completeness.

Since $C^\infty_0(\R^2)$ is dense in $H^1(\R^2)$, it suffices to prove \eqref{hardy n ge 2} for $u \in C^\infty_0(\R^2)$. Using polar coordinates,
\begin{equation} \label{dim two decompose by polar}
\int_{\R^2} r^{-1} |u|^2 dx = \int_{\US^1} \int^\infty_0 |u(r, \theta)|^2 dr d\theta.
\end{equation}
Integrating by parts
\begin{equation*}
\begin{split}
 \int^\infty_0 |u(r, \theta)|^2 dr &=  \int^\infty_0 |u(r, \theta)|^2 r' dr \\
 &= -2 \real \int^\infty_0 u(r, \theta) \overline{u}'(r, \theta) r dr \\
 &\le \int_0^\infty |u|^2 rdr + \int_0^\infty |u'|^2 r dr \\
 &\le  \int_0^\infty |u|^2 rdr + \int_0^\infty |\nabla u|^2 r dr.
 \end{split}
\end{equation*}
We conclude the proof of \eqref{hardy n ge 2} by integrating the last inequality over $\US^{1}$ and taking into account \eqref{dim two decompose by polar} \\
\end{proof}

\begin{lemma}
Let $m \ge 0$ and $\kappa > 0$. Then for any $0 < \nu < 1$,
\begin{equation*}
\int_0^\kappa \lambda^m \sin(t \lambda) d\lambda = O(t^{-\nu}), \qquad \text{as } t \to \infty.
\end{equation*}
\end{lemma}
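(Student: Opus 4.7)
The plan is to prove the stronger statement $\int_0^\kappa \lambda^m \sin(t\lambda)\, d\lambda = O(t^{-1})$ as $t \to \infty$, which immediately yields the claimed $O(t^{-\nu})$ bound for every $0 < \nu < 1$. The mechanism is a single integration by parts: differentiating $\lambda^m$ and integrating $\sin(t\lambda)$ converts the oscillation into an explicit factor of $t^{-1}$, while the resulting power $\lambda^{m-1}$ remains integrable for every $m \geq 0$ under consideration.

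First, I would write
\[
\int_0^\kappa \lambda^m \sin(t\lambda)\, d\lambda = \Big[-\tfrac{\lambda^m \cos(t\lambda)}{t}\Big]_{0}^{\kappa} + \frac{m}{t}\int_0^\kappa \lambda^{m-1} \cos(t\lambda)\, d\lambda.
\]
This is valid as an improper integral for each $m \ge 0$: the primitive $\lambda^m \cos(t\lambda)$ extends continuously to $\lambda = 0$ (taking value $1$ when $m = 0$ and $0$ when $m > 0$), and the weight $\lambda^{m-1}$ is integrable at the origin whenever $m > 0$ since $m - 1 > -1$. Consequently the boundary contribution is bounded in absolute value by $(\kappa^m + 1)/t = O(t^{-1})$.

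Next, I would bound the remaining integral. If $m = 0$, the prefactor $m$ kills it outright. If $m > 0$, then
\[
\Big|\frac{m}{t}\int_0^\kappa \lambda^{m-1} \cos(t\lambda)\, d\lambda\Big| \le \frac{m}{t}\int_0^\kappa \lambda^{m-1}\, d\lambda = \frac{\kappa^m}{t}.
\]
Combining these gives $\int_0^\kappa \lambda^m \sin(t\lambda)\, d\lambda = O(t^{-1})$, which is stronger than needed.

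There is no real obstacle; the only delicate bookkeeping is checking that the $m = 0$ case and the $0 < m < 1$ case are each admissible for the integration by parts. Both are easily handled as above. If one preferred not to rely on the integrability of $\lambda^{m-1}$ at the origin, an equivalent alternative would be to split the integral at $\lambda = t^{-\beta}$ with $\beta \in (\nu/(m+1),\,1)$, use $|\sin(t\lambda)| \le 1$ on $(0, t^{-\beta})$ to obtain an $O(t^{-\beta(m+1)}) = O(t^{-\nu})$ bound there, and integrate by parts on $(t^{-\beta}, \kappa)$ to obtain $O(t^{-1})$; but the direct IBP above is cleaner.
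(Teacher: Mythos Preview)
Your proof is correct and actually more efficient than the paper's. You integrate by parts once on the whole interval $(0,\kappa)$, observe that $\lambda^{m-1}$ is integrable at the origin for every $m>0$ (and that the remainder vanishes when $m=0$), and conclude the sharp bound $O(t^{-1})$. The paper instead splits the integral at $\lambda=t^{-\nu_1}$ for a suitable $\nu<\nu_1<1$: on $(0,t^{-\nu_1})$ it uses $|\sin(t\lambda)|\le t\lambda$ to get $O(t^{1-\nu_1(m+2)})$, and on $(t^{-\nu_1},\kappa)$ it integrates by parts and crudely bounds $\int_{t^{-\nu_1}}^\kappa \lambda^{m-1}\cos(t\lambda)\,d\lambda=O(\log t)$, yielding only $O(t^{-\nu})$. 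This is precisely the alternative you sketch at the end. The splitting buys nothing for the lemma as stated; it would be needed if the integrand carried extra factors like $(\log\lambda)^{m_1}$ that obstruct direct IBP at the origin, but here your direct argument is both shorter and sharper.
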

\begin{proof}
Let $\nu < \nu_1 < 1$ such that $\nu_1(m+2) > 1 + \nu$. We split the integral at $\lambda = t^{-\nu_1}$:
\begin{equation*}
\int_0^\kappa \lambda^m \sin(t \lambda) d\lambda = \int_0^{t^{-\nu_1}} \lambda^m \sin(t \lambda) d\lambda + \int_{t^{-\nu_1}}^\kappa \lambda^m \sin(t \lambda) d\lambda \qefed I_1 + I_2.
\end{equation*}
For the first integral, we use the bound $|\sin(t\lambda)| \le t\lambda$:
\begin{equation*}
|I_1| \le t \int_0^{t^{-\nu_1}} \lambda^{m+1} d\lambda = \frac{t}{m+2}(t^{-\nu_1})^{m+2} = O(t^{1-\nu_1(m+2)}).
\end{equation*}
By our choice of $\nu_1$, this is $O(t^{-\nu})$. For the second integral, we integrate by parts:
\begin{equation*}
I_2 = \left[ -\frac{\cos(t\lambda)}{t} \lambda^m \right]_{t^{-\nu_1}}^\kappa + \frac{m}{t} \int_{t^{-\nu_1}}^\kappa \lambda^{m-1} \cos(t \lambda)  d\lambda.
\end{equation*}
The boundary terms are $O(t^{-1})$. The final term is also bounded by $O(t^{-\nu})$, since \\$\int_{t^{-\nu_1}}^\kappa \lambda^{m-1} \cos(t \lambda)d\lambda = O(\log t)$.\\
\end{proof}

\begin{lemma} \label{simple Sobolev est lem}
Suppose $T : L^2(\R^n) \to H^2(\R^n)$ is a bounded operator. For any $s > 0$, there exists $C > 0$ so that 
\begin{equation} \label{recast L2 to H2 bd}
\| \langle x \rangle^{-s} T \|_{L^2 \to H^2} \le C(\| \langle x \rangle^{-s} T \|_{L^2 \to L^2} + \| \langle x \rangle^{-s} \Delta T \|_{L^2 \to L^2}).
\end{equation}
\end{lemma}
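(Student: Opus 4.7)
The plan is to apply the standard elliptic bound $\|f\|_{H^2} \le C(\|f\|_{L^2} + \|\Delta f\|_{L^2})$ to $f \defeq \langle x \rangle^{-s} T u$ and control the commutator terms that arise when $\Delta$ is pushed past the weight. For $u \in L^2(\R^n)$, writing
\begin{equation*}
\Delta(\langle x \rangle^{-s} T u) = \langle x \rangle^{-s} \Delta T u + 2 (\nabla \langle x \rangle^{-s}) \cdot \nabla T u + (\Delta \langle x \rangle^{-s}) T u,
\end{equation*}
and using that $|\Delta \langle x \rangle^{-s}|, |\nabla \langle x \rangle^{-s}| \le C \langle x \rangle^{-s}$, one obtains
\begin{equation*}
\|\Delta f\|_{L^2} \le \|\langle x \rangle^{-s} \Delta T u\|_{L^2} + C \|\langle x \rangle^{-s} \nabla T u\|_{L^2} + C \|\langle x \rangle^{-s} T u\|_{L^2}.
\end{equation*}

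The main point is therefore to control $\|\langle x \rangle^{-s} \nabla T u\|_{L^2}$ by $\|f\|_{H^2}$ with a small coefficient and $\|f\|_{L^2}$ with a large one. From $\nabla f = \langle x \rangle^{-s} \nabla T u + (\nabla \langle x \rangle^{-s}) T u$,
\begin{equation*}
\|\langle x \rangle^{-s} \nabla T u\|_{L^2} \le \|\nabla f\|_{L^2} + C \|\langle x \rangle^{-s} T u\|_{L^2}.
\end{equation*}
Since $T$ maps $L^2$ to $H^2$, the function $f$ lies in $H^2(\R^n)$, so the interpolation inequality from \eqref{std elliptic thry},
\begin{equation*}
\|f\|_{H^1}^2 \le C \|f\|_{L^2} \|f\|_{H^2} \le C (\gamma^{-1} \|f\|_{L^2}^2 + \gamma \|\Delta f\|_{L^2}^2 + \gamma \|f\|_{L^2}^2),
\end{equation*}
is available for each $\gamma > 0$.

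Combining these bounds gives, for every $\gamma>0$,
\begin{equation*}
\|\Delta f\|_{L^2} \le \|\langle x \rangle^{-s} \Delta T u\|_{L^2} + C(\gamma^{-1/2} + 1)\|\langle x \rangle^{-s} T u\|_{L^2} + C\gamma^{1/2} \|\Delta f\|_{L^2}.
\end{equation*}
Choosing $\gamma$ small enough to absorb the last term into the left-hand side, and feeding the resulting bound back into the elliptic estimate $\|f\|_{H^2} \le C(\|f\|_{L^2} + \|\Delta f\|_{L^2})$, yields \eqref{recast L2 to H2 bd}. The argument is essentially bookkeeping; no step is especially delicate, the only mild care being required in the interpolation-plus-absorption stage to avoid circularity.
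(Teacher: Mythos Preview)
Your proof is correct and follows essentially the same approach as the paper's: apply the elliptic bound to $\langle x\rangle^{-s}Tu$, expand the commutator $[\Delta,\langle x\rangle^{-s}]$, bound the resulting first-order term via the interpolation inequality $\|f\|_{H^1}^2 \le C(\gamma^{-1}\|f\|_{L^2}^2+\gamma\|\Delta f\|_{L^2}^2)$, and absorb with $\gamma$ small. The only differences from the paper are notational (your $f$ and $u$ play the opposite roles) and that you write the commutator out explicitly rather than as $[\Delta,\langle x\rangle^{-s}]u$.
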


\begin{proof}
Let $f \in L^2(\R^n)$ and put $u = Tf$. By the first line of \eqref{std elliptic thry}, there exists $C > 0$, whose precise value may change from line to line, so that 
\begin{equation} \label{apply std elliptic thry}
\| \langle x \rangle^{-s} u \|_{H^2} \le C ( \| \langle x \rangle^{-s} u \|_{L^2} + \| \Delta \langle x \rangle^{-s} u \|_{L^2})
\end{equation}
Then use the second of \eqref{std elliptic thry},
\begin{equation*}
\begin{split}
\| \Delta \langle x \rangle^{-s} u \|_{L^2} &\le \|[\Delta, \langle x \rangle^{-s}] u \|_{L^2} + \|\langle x \rangle^{-s} \Delta  u \|_{L^2} \\
&\le C\|\langle x \rangle^{-s} u \|_{H^1} + \|\langle x \rangle^{-s} \Delta  u \|_{L^2} \\
&\le C (\gamma^{-1} \| \langle x \rangle^{-s} u \|_{L^2} +  \gamma \| \Delta \langle x \rangle^{-s} u \|_{L^2}) +\|\langle x \rangle^{-s} \Delta  u \|_{L^2}, \qquad \gamma > 0.
\end{split}
\end{equation*}
Fixing $\gamma$ small enough yields,
\begin{equation*}
\| \Delta \langle x \rangle^{-s} u \|_{L^2} \le C(\| \langle x \rangle^{-s} u \|_{L^2} +\|\langle x \rangle^{-s} \Delta  u \|_{L^2}),
\end{equation*}
which in combination with \eqref{apply std elliptic thry} implies \eqref{recast L2 to H2 bd}.\\
\end{proof}

\section*{Declarations}

\noindent\textbf{Data Availability.} No datasets were generated or analyzed during the current study.

\medskip

\noindent\textbf{Funding.} J.S. acknowledges support from National Science Foundation (NSF) Division of Mathematical Sciences (DMS) award 2204322, a 2023 Fulbright Future Scholarship funded by the Kinghorn Foundation and hosted by the University of Melbourne, and a University of Dayton Research Council Seed Grant. M.Y. acknowledges support from an AMS-Simons Travel Grant and NSF award DMS-2509989. This article is based upon work supported by NSF DMS-1929284 while J.S. and M.Y. were in residence at the Institute for Computational and Experimental Research in Mathematics (ICERM) in Providence, RI, during the August 2024 workshop entitled \textit{Spectral Analysis for Schrödinger Operators}.

\end{document}